   \providecommand{\og}{\leavevmode\flqq~}%
   \providecommand{\fg}{\ifdim\lastskip>\z@\unskip\fi~\frqq}%
\newcommand{\noun}[1]{\textsc{#1}}
\numberwithin{equation}{section}
\numberwithin{figure}{section}
 \theoremstyle{definition}
 \newtheorem*{defn*}{\protect\definitionname}
  \theoremstyle{remark}
  \newtheorem*{rem*}{\protect\remarkname}
 \newcommand\thmsname{\protect\theoremname}
 \newcommand\nm@thmtype{theorem}
 \theoremstyle{plain}
 \newenvironment{namedthm}[1][Undefined Theorem Name]{
   \ifx{#1}{Undefined Theorem Name}\renewcommand\nm@thmtype{theorem*}
   \else\renewcommand\thmsname{#1}\renewcommand\nm@thmtype{namedtheorem}
   \fi
   \begin{\nm@thmtype}}
   {\end{\nm@thmtype}}
\theoremstyle{plain}
\newtheorem{thm}{\protect\theoremname}[section]
  \theoremstyle{definition}
  \newtheorem{defn}[thm]{\protect\definitionname}
  \theoremstyle{remark}
  \newtheorem{rem}[thm]{\protect\remarkname}
  \theoremstyle{plain}
  \newtheorem{prop}[thm]{\protect\propositionname}
  \theoremstyle{plain}
  \newtheorem{lem}[thm]{\protect\lemmaname}
\newenvironment{lyxlist}[1]
{\begin{list}{}
{\settowidth{\labelwidth}{#1}
 \setlength{\leftmargin}{\labelwidth}
 \addtolength{\leftmargin}{\labelsep}
 }}
{\end{list}}
  \theoremstyle{plain}
  \newtheorem{cor}[thm]{\protect\corollaryname}
\newcommand{\lyxaddress}[1]{
\par {\raggedright #1
\vspace{1.4em}
\noindent\par}
}
  \providecommand{\corollaryname}{Corollaire}
  \providecommand{\definitionname}{Définition}
  \providecommand{\lemmaname}{Lemme}
  \providecommand{\propositionname}{Proposition}
  \providecommand{\remarkname}{Remarque}
  \providecommand{\theoremname}{Théorème}
\providecommand{\theoremname}{Théorème}
\begin{document}
\global\long\def\tx#1{\mathrm{#1}}
\global\long\def\nf#1#2{\mathrm{\nicefrac{#1}{#2}}}
\global\long\def\ww#1{\mathbb{#1}}
\global\long\def\dd#1{\,\mbox{d}#1}
\newcommandx\sect[2][usedefault, addprefix=\global, 1=j, 2=\beta]{V_{#1}^{#2}}
\newcommandx\tsect[2][usedefault, addprefix=\global, 1=j, 2=\beta]{\tilde{V}_{#1}^{#2}}
\newcommandx\ssect[2][usedefault, addprefix=\global, 1=j]{\mathcal{V}_{#1}^{#2}}
\newcommandx\efeu[2][usedefault, addprefix=\global, 1=j, 2=\pm]{\Omega_{#1}^{#2}}
\global\long\def\fol#1{\mathcal{F}_{#1}}
\global\long\def\lif#1{\mathcal{L}_{#1}}
\global\long\def\holo#1{\frak{h}_{#1}}
\global\long\def\fholo#1{\hat{\frak{h}}_{#1}}
\global\long\def\ii{\tx i}
\global\long\def\diff#1{\tx{Diff}\left(#1\right)}
\global\long\def\adh#1{\tx{adh}\left(#1\right)}
\global\long\def\re#1{\Re\left(#1\right)}
\global\long\def\im#1{\Im\left(#1\right)}
\global\long\def\supp#1{\tx{Supp}\left(#1\right)}
\global\long\def\germ#1{\ww C\left\{  #1\right\}  }
\newcommandx\beam[2][usedefault, addprefix=\global, 1=z_{*}, 2=\delta]{S_{\lambda}\left(#1,#2\right)}
\global\long\def\zsk{\nicefrac{\ww Z}{k\ww Z}}
\newcommandx\bstrong[1][usedefault, addprefix=\global, 1=\mathcal{U}]{\overline{\partial#1}}
\newcommandx\bweak[1][usedefault, addprefix=\global, 1=\mathcal{U}]{\widehat{\partial#1}}
\foreignlanguage{english}{}\global\long\def\ppp#1#2{\frac{\partial#2}{\partial#1}}
 \newcommandx\sat[2][usedefault, addprefix=\global, 1=\fol{}]{\tx{Sat}_{#1}\left(#2\right)}
\global\long\def\id{\tx{Id}}
\newcommandx\csad[3][usedefault, addprefix=\global, 1=\fol{}, 2=S, 3=p]{\tx{CS}\left(#1,#2,#3\right)}

\title{Germes de feuilletages présentables du plan complexe%
\thanks{PrePrint%
}}

\author{Loïc Teyssier%
\thanks{LaboratoireI.R.M.A. \textendash{} Université de Strasbourg%
}}

\date{Septembre 2013}
\maketitle
\begin{abstract}
Soit $\fol{}$ un germe de feuilletage singulier du plan complexe.
Sous l'hypothèse que $\fol{}$ est une courbe généralisée, \noun{D.~Mar\'{i}n}
et \noun{J.\textendash{}F.~Mattei} ont établi l'incompressibilité
de $\fol{}$ dans un voisinage épointé d'un ensemble fini de courbes
analytiques. On montre ici que cette hypothèse ne peut être ignorée,
en exhibant divers exemples de feuilletages réduits après un éclatement
qui ne satisfont pas cette propriété. Même si nous montrons que les
nœuds\textendash{}cols sont incompressibles individuellement, le fait
que leurs feuilles ne se rétractent pas tangentiellement sur toutes
les composantes du bord de leur domaine de définition empêche la généralisation
totale de la construction de Mar\'{i}n\textendash{}Mattei. Finalement
nous caractérisation une classe presque complète des feuilletages,
dits fortement présentables, pour lesquels la construction de la monodromie
de Mar\'{i}n\textendash{}Mattei est possible.\\

\textbf{\hfill{}Abstract\hfill{}}

\selectlanguage{english}%
Let $\fol{}$ be a germ of a singular foliation of the complex plane.
Assuming that $\fol{}$ is a generalized curve \noun{D.~Mar\'{i}n}
and \noun{J.\textendash{}F.~Mattei} proved the incompressibility
of the foliation in a neighborhood from which a finite set of analytic
curves is removed. We show in the present work that this hypothesis
cannot be eluded, by building examples of foliations, reduced after
one blow\textendash{}up, for which the property does not hold. Even
if we manage to prove that the individual saddle\textendash{}node
foliation is incompressible, their leaves not retracting tangentially
on all the components of the definition domain boundary forbids a
generalization of Mar\'{i}n\textendash{}Mattei's construction. We
finally characterize a near\textendash{}complete class of foliations,
called strongly presentable, for which the construction of Mar\'{i}n\textendash{}Mattei's
monodromy can be carried out.
\end{abstract}

\section{Introduction}

Par beaucoup d'aspects, un feuilletage holomorphe $\fol{}$ d'un ouvert
$\mathcal{U}$ semble généraliser, en son lieu régulier, la notion
de fibration localement triviale. Prenons à titre d'illustration le
feuilletage donné par les niveaux d'une submersion holomorphe 
\begin{eqnarray*}
f\,:\,\mathcal{B} & \longrightarrow & \ww C
\end{eqnarray*}
admettant une fibre singulière (non nécessaire irréductible) $\mathcal{S}=f^{-1}\left(0\right)$.
Ici $f$ est holomorphe sur (un voisinage de) la boule euclidienne
fermée $\adh{\mathcal{B}}$ de rayon choisi suffisamment petit pour
que les feuilles de $\fol{}$ soient transverses à la sphère $\partial\mathcal{B}$.
Un résultat classique de \noun{J.~Milnor}~\citep{Mimi} assure l'incompressibilité
de $\fol{}$ : il existe une famille de \textbf{tubes de Milnor} $\mathcal{T}_{\eta}$
de $\mathcal{S}$, images réciproques d'un petit disque $\eta\ww D$,
telle que le groupe fondamental de chaque feuille régulière de $\fol{}|_{\mathcal{T}_{\eta}}$
s'injecte dans celui de $\mathcal{T}_{\eta}\backslash\mathcal{S}$,
lui\textendash{}même isomorphe à $\pi_{1}\left(\mathcal{B}\backslash\mathcal{S}\right)$. 

Dans deux travaux récents~\citep{MarMat,MarMatMono}, \noun{D.~Mar\'{i}n}
et \noun{J.\textendash{}F.~Mattei} ont dégagé des conditions suffisantes
sous lesquelles ce résultat se généralise lorsque $\fol{}$ n'admet
pas d'intégrale première holomorphe non triviale. Précisons cela.
\begin{defn*}
Dans tout cet article $\fol{}$ désigne un germe de feuilletage holomorphe
singulier\emph{ }en $\left(0,0\right)$. On dit que $\fol{}$ est
\textbf{incompressible} s'il existe 
\begin{itemize}
\item une union finie $\mathcal{S}\subset\mathcal{B}$ de courbes analytiques
invariantes par $\fol{}$ et contenant la singularité, disons d'équation
$\left\{ f=0\right\} $ pour fixer les notations, appelées \textbf{séparatrices
distinguées},
\item une famille de tubes de Milnor $\left(\mathcal{T}_{\eta}\right)_{0<\eta\leq\eta_{0}}$
de $\mathcal{S}$ (au sens précédent~: $T_{\eta}=f^{-1}\left(\eta\ww D\right)$),
sur lesquels $\fol{}$ est bien défini,
\item une voisinage $\mathcal{U}$ de la singularité,
\end{itemize}

tels que, en notant $\mathcal{T}:=\mathcal{T}_{\eta_{0}}$,
\begin{enumerate}
\item $\mathcal{U}\subset\mathcal{T}$, cette inclusion induisant un isomorphisme
au niveau des groupes fondamentaux $\pi_{1}\left(\mathcal{U}\backslash\mathcal{S}\right)\simeq\pi_{1}\left(\mathcal{T}\backslash\mathcal{S}\right)$,
\item $\mathcal{T}_{\eta}^ {}\subset\mathcal{U}$ pour tout $\eta$ assez
petit,
\item pour chaque feuille $\mathcal{L}$ de $\fol{}|_{\mathcal{U}\backslash\mathcal{S}}$
le morphisme canonique induit par l'inclusion $\iota\,:\,\mathcal{L}\hookrightarrow\mathcal{U}\backslash\mathcal{S}$
\begin{eqnarray*}
\iota^{*}\,:\,\pi_{1}\left(\mathcal{L}\right) & \longrightarrow & \pi_{1}\left(\mathcal{U}\backslash\mathcal{S}\right)
\end{eqnarray*}
soit injectif.
\end{enumerate}

Pour préciser les notations nous serons parfois amenés à dire que
$\fol{}$ est incompressible dans $\left(\mathcal{U},\mathcal{S}\right)$.

\end{defn*}
\begin{rem*}
La condition~(1) stipule que l'on ne s'autorise pas à prendre des
voisinages $\mathcal{U}$ volontairement tordus\emph{ }pour accommoder
la topologie éventuellement compliquée des feuilles. La topologie
de l'espace ambiant doit être \og la plus simple possible \fg{}.
\end{rem*}
Toute singularité d'un germe en $p\in\ww C^{2}$ de feuilletage holomorphe
peut être \og réduite \fg{} à travers une application rationnelle
propre%
\footnote{Voir par exemple l'algorithme de \noun{A.~Seidenberg}~\citep{Seiden}.%
} $E\,:\,\mathcal{M}\to$$\left(\ww C^{2},p\right)$, où $\mathcal{M}$
est un voisinage conforme d'un arbre $E^{-1}\left(0\right)$ de diviseurs
$\ww P_{1}\left(\ww C\right)$ à croisement normaux. Le tiré en arrière
$E^{*}\fol{}$ ne possède alors que des singularités (dites réduites)
placées sur le diviseur exceptionnel $E^{-1}\left(0\right)$, dont
le quotient $\lambda$ des valeurs propres de la partie linéaire n'est
pas un rationnel positif. 
\begin{namedthm}
[Théorème de Mar\'{i}n-Mattei]Tout feuilletage dont la réduction
ne comporte pas de nœud\textendash{}col%
\footnote{$\lambda=0$.%
} ou de selle quasi\textendash{}résonante%
\footnote{$\lambda\in\ww R_{<0}\backslash\ww Q$, non linéarisable.%
} est incompressible.
\end{namedthm}
Ce théorème est traité dans~\citep{MarMat} pour les feuilletages
n'ayant pas de composante dicritique dans leur réduction, et dans~\citep{MarMatMono}
pour les cas restants en faisant l'hypothèse technique supplémentaire
que les éventuelles composantes initiales%
\footnote{Voir Section~\ref{sub:Comp-init} pour une définition.%
} sont dynamiquement isolées. Nous reviendrons en Section~\ref{sec:Composantes-initiales}
sur ce dernier point~; bornons\textendash{}nous à mentionner pour
l'instant que cette hypothèse est superflue.
\begin{rem*}
~
\begin{enumerate}
\item Il est possible de décrire explicitement l'ensemble $\mathcal{S}$.
En notant $E$ le morphisme de réduction de la singularité de $\fol{}$,
l'ensemble $\mathcal{S}$ est l'union des adhérences des images par
$\mathcal{E}$ des séparatrices de $E^{*}\fol{}$ croisant des composantes
non dicritiques, auxquelles s'ajoute l'image par $\mathcal{E}$ d'un
germe de feuille transverse par composante dicritique. 
\item L'hypothèse du théorème portant sur le type des singularités réduites
est générique à donnée combinatoire de l'arbre de réduction et à nombre
de singularités réduites fixé.
\end{enumerate}
\end{rem*}
L'incompressibilité assure l'existence d'un revêtement universel feuilleté.
Par cela on entend que le revêtement universel $\pi_{\mathcal{U}}\,:\,\widetilde{\mathcal{U}\backslash\mathcal{S}}\to\mathcal{U}\backslash\mathcal{S}$
est aussi, en restriction, un revêtement universel de chaque feuille
de $\fol{}$. Le groupe des automorphismes de ce revêtement est alors
constitué de symétries du feuilletage $\pi_{\mathcal{U}}^{*}\fol{}$,
et à ce titre agit naturellement sur l'espace de ses feuilles $\tilde{\Omega}_{\mathcal{U}}$.
On dispose alors d'une action
\begin{eqnarray*}
\frak{m}_{\mathcal{U}}\,:\,{\tt Aut}\left(\pi_{\mathcal{U}}\right) & \longrightarrow & {\tt Aut}\left(\tilde{\Omega}_{\mathcal{U}}\right)
\end{eqnarray*}
que l'on nomme \textbf{monodromie} de $\left(\fol{},\mathcal{U},\mathcal{S}\right)$.
Le quotient $\Omega_{\mathcal{U}}:=\nf{\tilde{\Omega}_{\mathcal{U}}}{\frak{m}_{\mathcal{U}}}$
s'identifie canoniquement à l'espace des feuilles de $\fol{}$.

Cette construction ne revêt qu'un intérêt modeste si l'on ne dote
pas les espaces de feuilles d'une structure analytique. Cette structure
supplémentaire va faire de la monodromie un invariant analytique du
triplet $\left(\fol{},\mathcal{U},\mathcal{S}\right)$, dont la \og germification \fg{}
quand $\eta_{0}\to0$ est un classifiant local (au voisinage de $\mathcal{S}$)
complet pour un choix générique de feuilletages~\citep{MarMatMono}.
Sa construction est donc une étape importante vers une compréhension
plus globale des germes de singularités de feuilletages. 

L'ingrédient nécessaire à l'existence d'une structure analytique canonique
sur $\tilde{\Omega}_{\mathcal{U}}$ est celle d'une courbe transverse
$\mathcal{C}$ (non nécessairement irréductible) qualifiée ici de
complètement connexe. Les composantes de $\pi^{-1}\left(\mathcal{C}\right)$
doteront l'espace des feuilles de cartes analytiques, à condition
que chaque feuille de $\pi^{*}\fol{}$ coupe au plus une fois chaque
composante de $\pi^{-1}\left(\mathcal{C}\right)$. Ce sont ces propriétés
qui sont isolées par la définition suivante~:
\begin{defn}
\label{def_TCC}Un germe de courbe analytique $\mathcal{C}$ (non
nécessairement irréductible) est une \textbf{transversale complètement
connexe }de $\fol{}$ s'il existe un couple $\left(\mathcal{U},\mathcal{S}\right)$,
dans lequel $\fol{}$ est incompressible, tel que:
\begin{enumerate}
\item $\mathcal{C}\backslash\mathcal{S}$ soit une courbe analytique lisse
transverse aux feuilles de $\fol{}$,
\item $\sat{\mathcal{U}\cap\mathcal{C}\backslash\mathcal{S}}=\mathcal{U}\backslash\mathcal{S}$,
\item $\mathcal{C}\backslash\mathcal{S}$ soit $1$\textendash{}connexe
dans $\mathcal{U}\backslash\mathcal{S}$ relativement à $\fol{}$.
\end{enumerate}
\end{defn}
Notons qu'il existe toujours une courbe $\mathcal{C}$ satisfaisant
les deux premières conditions~(1) et~(2), quitte à munir $\mathcal{C}$
de suffisamment de composantes (voir~\citep[p161]{Lolo}), courbe
que l'on nomme alors \textbf{transversale complète}. La notion de
$1$\textendash{}connexité est définie dans~\citep{MarMat} (voir
aussi la Définition~\ref{def_1_connexe}), et revient ici à demander
que chaque feuille de $\pi^{*}\fol{}$ coupe au plus une fois chaque
composante de $\pi^{-1}\left(\mathcal{C}\right)$. Un corollaire du
théorème de Mar\'{i}n\textendash{}Mattei est alors l'existence, sous
les mêmes hypothèses, d'une transversale complètement connexe~\citep[Théorème 6.1.1, p900]{MarMat}.\bigskip{}

Le but de cet article est d'établir d'une part que la condition de
ne pas posséder de nœud\textendash{}col dans la réduction du lieu
singulier n'est pas superflue, mais d'autre part qu'elle n'est pas
toujours nécessaire, dans la perspective de bâtir cet invariant analytique.
Nous allons généraliser sensiblement les résultats exposés ci\textendash{}dessus,
en donnant une caractérisation presque complète des feuilletages sous\textendash{}tendant
une monodromie.

\bigskip{}

Je tiens à remercier chaleureusement \noun{D.~Mar\'{i}n} et \noun{J.\textendash{}F.~Mattei
}pour les discussions que nous avons eues autour de ce sujet, plus
particulièrement pour la construction des exemples donnés à la fin,
et pour leurs encouragements répétés à mener à terme la rédaction
du présent travail.

\subsection{Présentation des principaux résultats}

Nous souhaitons donner dans ce texte la preuve des trois théorèmes
principaux suivants.
\begin{namedthm}
[Théorème A]Un germe de feuilletage de type nœud\textendash{}col
ou selle quasi\textendash{}résonnante est incompressible.
\end{namedthm}
Un tel résultat pourrait laisser espérer que le théorème de Mar\'{i}n\textendash{}Mattei
se généralise sans contrainte. Ce n'est malheureusement pas le cas.
\begin{namedthm}
[Théorème B]Il existe des feuilletages $\fol{}$ singuliers, non
dicritiques, qui sont compressibles.
\end{namedthm}
Contrairement aux autres singularités réduites, qui possèdent deux
séparatrices transverses passant par la singularité et tangentes aux
espaces propres de leur partie linéaire, un nœud\textendash{}col générique
n'en admet qu'une (on dit que ce dernier est \textbf{divergent}%
\footnote{La terminologie sera explicitée et justifiée en Section~\ref{sec:Notations}.%
}), appelée \textbf{séparatrice forte} et tangente à l'espace propre
de la valeur propre non nulle de sa partie linéaire, même si certains
en possèdent deux (ceux\textendash{}ci sont nommés \textbf{convergents}$^{\tx 5}$). 

Les premiers exemples que nous avons pu construire pour établir le
Théorème~B possèdent tous un nœud\textendash{}col divergent dans
leur réduction. On pourrait alors penser qu'il suffit de proscrire
les nœuds\textendash{}cols divergents pour assurer l'incompressibilité
du feuilletage, suivant l'idée que ceux\textendash{}ci privent de
façon injuste l'ensemble $\mathcal{S}$ d'une séparatrice. Il n'en
est rien~: nous construisons également des exemples de feuilletages
compressibles n'ayant aucun nœud\textendash{}col divergent.

\bigskip{}

Pour autant nous avons pu affaiblir l'hypothèse du théorème de Mar\'{i}n\textendash{}Mattei:
l'incompressibilité et l'existence d'une transversale complètement
connexe vont dépendre de manière cruciale de la façon dont les nœuds\textendash{}cols
sont positionnés dans l'arbre de réduction.
\begin{defn}
Soit $\fol{}$ un germe de feuilletage du plan complexe.
\begin{enumerate}
\item $\fol{}$ est \textbf{présentable} s'il est incompressible et s'il
admet une transversale complètement connexe.
\item $\fol{}$ est \textbf{fortement présentable} si les séparatrices fortes
des nœuds\textendash{}cols apparaissant dans une réduction minimale
ne sont jamais des composantes du diviseur exceptionnel%
\footnote{C'est en particulier le cas si $\fol{}$ est une singularité réduite.%
}. 
\end{enumerate}
\end{defn}
\begin{figure}[H]
\hfill{}\includegraphics[width=10cm]{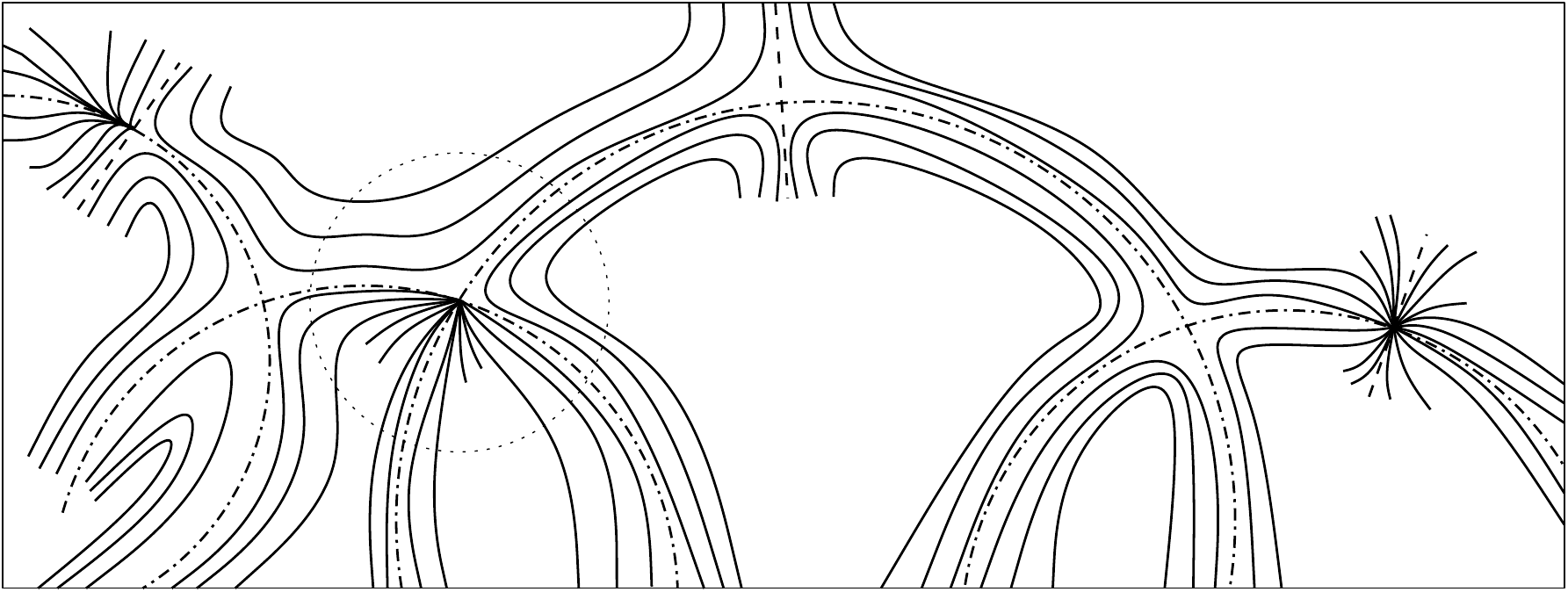}\hfill{}

\caption{Ce feuilletage n'est pas fortement présentable car il exhibe un nœud\textendash{}col
dans un coin.}
\end{figure}

Un feuilletage fortement présentable non réduit ne contient donc jamais
de nœud\textendash{}col divergent dans sa réduction, et les singularités
apparaissant aux points de croisement de deux composantes du diviseur
exceptionnel (les \og coins \fg{}) ne sont pas des nœuds\textendash{}cols.
\begin{rem}
\label{rem_eclate_presentable}La définition de feuilletage fortement
présentable n'est curieusement pas invariante par éclatement ponctuel.
Plus précisément, l'éclatement d'un nœud\textendash{}col produit un
diviseur possédant d'une part une selle non linéarisable, d'autre
part un nœud\textendash{}col dont la séparatrice forte est un diviseur.
C'est pourquoi nous imposons une réduction minimale dans la définition
d'un feuilletage fortement présentable (ou en tout cas une réduction
de la singularité où aucun nœud\textendash{}col n'a été éclaté après
apparition). \end{rem}
\begin{namedthm}
[Théorème C]~Tout germe de feuilletage fortement présentable est
présentable.
\end{namedthm}
Comme le souligne la remarque précédente la condition d'être fortement
présentable n'est pas nécessaire pour être présentable. Cependant
les exemples que nous construisons montrent qu'elle n'est pas superflue.
Caractériser complètement les feuilletages présentables est une question
difficile, car les propriétés les définissant sont globales alors
que les limitations (techniques) imposées par la forte présentabilité
sont locales.

\subsection{Structure de l'article et esquisse des preuves}

Le corps de cet article commence par une section consacrée à introduire
les notations et résultats classiques que nous utiliserons. Ensuite
nous procéderons aux démonstrations proprement dites. Afin de permettre
au lecteur pressé de se faire une idée des techniques employées pour
parvenir aux résultats annoncés, nous présentons rapidement dans le
reste du paragraphe le squelette de notre argumentation.

\subsubsection{Le Théorème~A est démontré en Sections~\ref{sec:Incompress_ND}
et~\ref{sec:Incompress_NC}}

On montre directement que le tiré\textendash{}en\textendash{}arrière
d'un feuilletage nœud\textendash{}col convergent ou selle quasi\textendash{}résonante
par le revêtement universel d'un polydisque $\rho\ww D\times r\ww D$
assez petit, épointé de la séparatrice forte $\left\{ x=0\right\} $,
ne possède que des feuilles simplement connexes. On invoque un argument
de transversalité avec les fibres de la projection $\Pi\,:\,\left(\log x,y\right)\mapsto\log x$
pour prouver dans un premier temps que le bord d'une feuille $\lif{}$
est contenu dans le bord du polydisque. Un argument variationnel immédiat
permet alors d'identifier des familles de chemins complètement contenues
dans $\Pi\left(\lif{}\right)$, que l'on nomme \og faisceaux de stabilité \fg{},
le long desquels le module de l'ordonnée de $\lif{}$ diminue~: ces
chemins sont donc contenus dans $\lif{}$. Pour prouver que tout cycle
$\gamma$ de $\lif{}$ est tangentiellement trivial, on construit
une homotopie entre $\Pi\circ\gamma$ et un lacet bordant une région
d'intérieur vide en suivant des faisceaux de stabilité, ce qui garantit
qu'elle se relève dans $\lif{}$ en une trivialisation de $\gamma$. 

Le cas divergent se ramène au cas précédent en redressant au\textendash{}dessus
de secteurs dans la variable $x$ les resommées sectorielles de la
séparatrice \og faible \fg{}. Puisque les faisceaux de stabilité
restent à l'intérieur d'un secteur donné, et comme le redressement
des séparatrices se fait à travers une application fibrée dans la
coordonnée $x$, l'argument est essentiellement le même. $\mathcal{U}\backslash\left\{ x=0\right\} $
est alors l'union des pré\textendash{}images du polydisque $\rho\ww D\times r\ww D$
par les redressements sectoriels.

\subsubsection{Le Théorème~C est établi en Section~\ref{sec:presentable}}

Pour que le raisonnement soit le plus clair possible nous résumons
la construction initiale de Mar\'{i}n\textendash{}Mattei en Section~\ref{sec:Construction_originale}.
En adaptant la construction, on montre assez facilement l'existence
d'une transversale complètement connexe à un feuilletage fortement
présentable, en incorporant un nouveau type de bloc élémentaire $B$
renfermant des nœuds\textendash{}cols convergents ou des selles quasi\textendash{}résonantes.
La propriété d'incompressibilité s'obtient en utilisant également
les propriétés d'origine de collage bord\textendash{}à\textendash{}bord.
Il faut pour cela garantir que le bloc élémentaire $B$ (ou plutôt
son bord) se prête à cet assemblage. Le cas des selles quasi\textendash{}résonantes
est très proche du cas des selles résonantes traitées dans~\citep{MarMat},
puisque les feuilles se rétractent radialement sur leur bord (feuilles
de type collier). Expliquons maintenant le cas, plus complexe, du
nœud\textendash{}col.

Le bord de $B$ est formé de deux composantes connexes : une composante
forte $\partial B\cap\left\{ \left|y\right|=r\right\} $ et une composante
faible $\partial B\cap\left\{ \left|x\right|=\rho\right\} $. On peut
faire en sorte de choisir $B$ pour que l'une ou l'autre de ces composantes
soit de type suspension. Cela signifie grossièrement que cette composante,
disons forte (\emph{resp.} faible), s'obtient comme le balayage par
le transport holonome d'un petit disque conforme transverse $\Sigma$
en effectuant une fois le tour du cercle $\left\{ \left|y\right|=r\right\} $
(\emph{resp.} $\left\{ \left|x\right|=\rho\right\} $). Dès que cette
propriété est assurée il faut contrôler que l'intersection de $\Sigma$
avec son image par l'holonomie est connexe (afin de ne pas créer de
topologie artificielle dans l'espace ambiant). Nous montrons ces propriétés
en utilisant les outils (notamment la notion de rugosité) introduits
par \noun{D.~Mar\'{i}n} et \noun{J.\textendash{}F.~Mattei}. 

Ceci étant dit, arrive l'obstruction majeure forçant les nœuds\textendash{}cols
à être placés de la bonne façon. Il faut en effet pouvoir garantir
la $1$\textendash{}connexité dans $B$ de la composante du bord que
l'on souhaite assembler aux autres blocs. Cette propriété est vérifiée
par la composante faible. En contraste, la composante forte du bord
n'est jamais $1$\textendash{}connexe dans $B$. Dans chaque feuille
d'un nœud\textendash{}col existent en effet des \og chemins inamovibles \fg{}
dont les extrémités $\left\{ p_{1},p_{2}\right\} $ sont situées dans
une transversale donnée $\left\{ y=\mbox{cte}\right\} $ et qui sont
homotopes dans $B$, privé de la séparatrice qui n'est pas un diviseur,
à un chemin les joignant dans la transversale. Cependant il n'est
pas possible d'opérer cette homotopie tangentiellement au feuilletage.
La coexistence d'un comportement \og col \fg{} et \og nœud \fg{}
au sein de certaines feuilles prévient en effet cette possibilité.
Le point de vue topologique est le suivant : certaines feuilles de
nœud\textendash{}col ne peuvent se rétracter tangentiellement sur
la composante forte du bord, en d'autres termes les feuilles ne sont
pas de type collier vis\textendash{}à\textendash{}vis de cette composante
(alors qu'ils le sont pour la composante faible). Ainsi, quelle que
soit la forme du bloc $B$, la composante forte de son bord ne sera
jamais $1$\textendash{}connexe dans $B$, ce qui interdit d'appliquer
le procédé de localisation sur lequel se base le théorème de Mar\'{i}n\textendash{}Mattei.
Néanmoins cette obstruction est d'ordre technique, comme l'a souligné
la Remarque~\ref{rem_eclate_presentable}.

\subsubsection{Les exemples du Théorème~B sont produits en Section~\ref{sec:Exemples}}

Nous présentons quelques exemples, dont le Théorème~B découle. Dans
un premier temps (Section~\ref{sub:ex_dvg}), la compressibilité
des feuilles est fournie par un argument de taille de groupe (impossibilité
d'injecter un groupe libre de rang $2$ dans un groupe commutatif),
en réalisant un tel groupe de difféomorphismes comme l'holonomie projective
d'un feuilletage admettant au moins un nœud\textendash{}col divergent
dans sa réduction. L'argument est ici d'utiliser la non existence
d'une seconde séparatrice pour forcer la compressibilité. Il est à
noter que dans le second exemple nous parvenons en outre à exhiber
un lacet non trivial dans le noyau de $\iota^{*}$.

Nous construisons ensuite (Section~\ref{sub:ex_cvg}) une famille
d'exemples de feuilletages compressibles n'ayant que des nœuds\textendash{}cols
convergents dans leur réduction. À cette fin nous exploitons la mise
en défaut, déjà évoquée, de la $1$\textendash{}connexité de la composante
forte du bord dans un bloc renfermant un nœud\textendash{}col. On
met en regard deux nœuds\textendash{}cols modèles partageant la même
séparatrice forte de sorte à connecter deux chemins inamovibles pour
former un cycle. Celui\textendash{}ci est trivial dans l'espace privé
des séparatrices mais pas dans la feuille le contenant. 

\tableofcontents{}

\section{\label{sec:Notations}Notations et rappels}

Nous décrivons brièvement les deux types de singularités élémentaires
apparaissant comme singularités finales dans la réduction d'un germe
de feuilletage holomorphe singulier, essentiellement afin de fixer
quelques notations. Les feuilles du feuilletage $\fol{}$ associé
à la $1$\textendash{}forme différentielle
\begin{eqnarray*}
\omega & = & P\dd x-Q\dd y
\end{eqnarray*}
sont les surfaces de Riemann intégrant la distribution des noyaux
de $\omega$. Si $\fol{}$ n'est pas régulier en un point $p$, c'est\textendash{}à\textendash{}dire
$\omega\left(p\right)=0$, on peut supposer que $P\wedge Q=1$ en
tant que germes en ce point, auquel cas leurs zéros communs sont isolés.
On se place alors sur un voisinage de $p$ suffisamment petit pour
que ce soit la seule singularité. La singularité $p$ est dite \textbf{élémentaire}
lorsque la partie linéaire du feuilletage en $p$, naturellement identifiée
à (la classe sous l'action de $\ww C_{\neq0}$ par homothéties de)
la matrice $\left[\begin{array}{cc}
\ppp xP & \ppp xQ\\
\ppp yP & \ppp yQ
\end{array}\right]\left(p\right)$, admet au moins une valeur propre non nulle, disons $\lambda_{2}$.
Quand le quotient de l'autre valeur propre par celle\textendash{}ci
\begin{eqnarray*}
\lambda & := & \nf{\lambda_{1}}{\lambda_{2}}
\end{eqnarray*}
 n'est pas un rationnel strictement positif on parle de singularité
\textbf{réduite}. Si $\lambda_{j}\neq0$ il existe une unique feuille
de $\fol{}$ dont l'adhérence est une variété analytique lisse tangente
en $p$ à l'espace propre associé à $\lambda_{j}$. Une telle séparatrice
sera qualifiée de \textbf{séparatrice forte}.

\subsection{Singularités réduites non dégénérées~: $\lambda\neq0$}

Après avoir \og redressé \fg{} les séparatrices fortes sur les axes
$\left\{ xy=0\right\} $ d'un système local de coordonnés analytiques,
le feuilletage est donné par la $1$\textendash{}forme différentielle
\begin{eqnarray}
\omega_{R} & := & \lambda x\dd y-y\left(1+R\right)\dd x\label{eq:prepa_non-degenere}
\end{eqnarray}
où $R\in x^{k}\germ{x,y}$ pour $k\in\ww N_{>0}$ (arbitrairement
grand).

Lorsque $\lambda\notin\ww R$ la singularité est \textbf{hyperbolique},
et le théorème de Poincaré assure sa linéarisabilité. Lorsque $\lambda>0$
la singularité est un \textbf{nœud}, également linéarisable. Lorsque
$\lambda\in\ww Q_{<0}$ et que la singularité n'est pas linéarisable
on parle de \textbf{selle résonnante}. Enfin si $\lambda\in\ww R_{<0}\backslash\ww Q$
la singularité est toujours formellement linéarisable, mais pour un
ensemble de mesure nulle de tels $\lambda$ il se peut que la série
linéarisante diverge (présence de petits diviseurs). On parle alors
de \textbf{selle quasi\textendash{}résonnante}.

On notera que sous la forme préparée~\ref{eq:prepa_non-degenere}
les feuilles sont transverses aux fibres de la projection canonique

\begin{eqnarray*}
\Pi\,:\,\left(x,y\right) & \longmapsto & x\,,
\end{eqnarray*}
en dehors de la séparatrice $\left\{ x=0\right\} $.

\subsection{Nœuds\textendash{}cols~: $\lambda=0$}

Tout germe de feuilletage $\fol{}$ de type nœud\textendash{}col,
d'invariant formel $\left(k,\mu\right)\in\ww N_{>0}\times\ww C$ \emph{a
priori }fixé, est, dans une certaine coordonnée locale centrée en
$\left(0,0\right)$, donné par une forme différentielle dans l'écriture
préparée de Dulac~\citep{Dulac}
\begin{eqnarray}
\omega_{R} & := & x^{k+1}\dd y-\left(y+R\right)\dd x\,,\label{eq:prepa_degenere}
\end{eqnarray}
où $R\in x^{k}\left(\mu y+x\germ{x,y}\right)$ est arbitraire. La
séparatrice forte est redressée sur $\left\{ x=0\right\} $. Si on
s'autorise des changements de coordonnées formels alors on peut toujours
conjuguer $\omega_{R}$ à $\omega_{0}$, forme que l'on appellera
\textbf{modèle formel}. Les feuilles de $\omega_{0}$ sont les composantes
connexes des niveaux de l'intégrale première multivaluée
\begin{eqnarray*}
H_{0}\left(x,y\right) & := & yx^{-\mu}\exp\left(kx^{-k}\right)\,.
\end{eqnarray*}

Ici encore les feuilles autres que la séparatrice forte sont transverses
aux fibres de $\Pi$. L'entier $k$ est un invariant topologique,
et gouverne le nombre $2k$ (génériquement optimal) de secteurs sur
lesquels $\omega_{R}$ sera conjuguée analytiquement au modèle formel~\citep{HuKiMa}.
Nous utiliserons essentiellement l'existence de \og séparatrices
sectorielles \fg{} démontrée dans la référence citée. Précisons cela.
Le feuilletage admet une séparatrice formelle $\left\{ y=\hat{s}\left(x\right)\right\} $
correspondant à la feuille d'adhérence analytique $\left\{ y=0\right\} $
de $\omega_{0}$, que l'on nomme \textbf{séparatrice faible}. On dit
que le nœud\textendash{}col est \textbf{divergent} ou \textbf{convergent}
selon que la série $\hat{s}$ diverge ou converge. Pour un nœud\textendash{}col
convergent le changement de coordonnée analytique 
\begin{eqnarray*}
\left(x,y\right) & \longmapsto & \left(x,y-\hat{s}\left(x\right)\right)
\end{eqnarray*}
 transforme la forme différentielle initiale en $\omega_{yR}$ (pour
un autre germe encore noté $R$ par simplicité), auquel cas la séparatrice
faible initiale est redressée sur $\left\{ y=0\right\} $. Dans le
cas contraire la série $\hat{s}$ est une série formelle $k$\textendash{}Gevrey,
qui s'avère être $k$\textendash{}sommable. Il existe ainsi $k$ fonctions
holomorphes et bornées $\left(s_{j}\right)_{j\in\zsk}$, chacune vivant
sur un secteur ouvert
\begin{eqnarray*}
\sect & := & \left\{ x\,:\,0<\left|x\right|<\rho\,,\,\left|\arg x-\frac{\pi}{k}\left(2j+1\right)\right|<\frac{\pi}{k}+\beta\right\} \,,
\end{eqnarray*}
 $\rho>0$ et $0<\beta<\frac{\pi}{2k}$ étant fixés assez petits,
et dont le graphe est tangent à $\fol{}$. Leur développement asymptotique
en $0$ coïncide avec $\hat{s}$. Lorsque $k=1$ le secteur $V_{0}$
doit être compris comme un secteur d'ouverture supérieure à $2\pi$
ne se recollant pas. Ceci étant dit, comme la plupart du travail sera
menée dans la coordonnée $\log x$, il n'y aura pas d\textquoteright{}ambiguïté.

\subsection{Holonomie}

Le principal objet dynamique associé à un feuilletage est son holonomie.
Si $\mathcal{S}$ est une séparatrice (forte ou, lorsqu'elle converge,
faible) disons $\mathcal{S}=\left\{ x=0,\, y\neq0\right\} $, on construit
l'holonomie comme suit. On choisit un générateur $\gamma\,:\,\left[0,1\right]\to\mathcal{S}$
de son groupe fondamental, dont l'image est incluse dans un voisinage
$\mathcal{U}$ simplement connexe donné de $\left(0,0\right)$~;
il existe un voisinage $\mathcal{W}\subset\mathcal{U}$ de l'image
de $\gamma$ sur lequel $\ker\omega_{R}$ est transverse aux fibres
de la projection $\pi\,:\,\left(x,y\right)\mapsto y$. Quitte à réduire
la taille de $\mathcal{W}$ on pourra relever $\gamma$ par $\pi$
dans les feuilles de $\omega_{R}|_{\mathcal{V}}$ en s'appuyant sur
un point $p$ quelconque de la transversale $\Sigma:=\pi^{-1}\left(\gamma\left(0\right)\right)\cap\mathcal{W}$.
Rien n'indique que ce relevé $\gamma_{p}$ sera un lacet et l'holonomie
forte sera l'application holomorphe injective fixant $p_{*}:=\gamma\left(0\right)$,
définie par
\begin{eqnarray*}
\holo{\gamma}\,:\,\Sigma & \longrightarrow & \pi^{-1}\left(p_{*}\right)\\
p & \longmapsto & \gamma_{p}\left(1\right)\,.
\end{eqnarray*}
Le germe d'application holomorphe ainsi défini ne dépend que de la
classe d'homotopie%
\footnote{Toutes les homotopies considérées dans ce texte, à part quelques exceptions
explicitement annoncées, seront supposées à extrémités fixes. %
} dans $\mathcal{S}$ de $\gamma$. Plus généralement cette construction
donne lieu à un morphisme de groupes
\begin{eqnarray*}
\holo{\bullet}\,:\,\pi_{1}\left(\mathcal{S},p_{*}\right) & \longrightarrow & \diff{\Sigma,p_{*}}\\
\gamma & \longmapsto & \holo{\gamma}\,.
\end{eqnarray*}
Ici $\diff{\Sigma,p_{*}}$ est le groupe des germes de difféomorphismes
au voisinage de $p_{*}$ et laissant ce point fixe.

\subsection{\label{sub:Comp-init}Réduction d'une singularité et branches mortes}

Soit $\fol{}$ un germe de feuilletage singulier à l'origine de $\ww C^{2}$
et $E\,:\,\mathcal{M}\to\left(\ww C^{2},0\right)$ un morphisme minimal
de réduction de $\fol{}$. On note $\hat{\fol{}}:=E^{*}\fol{}$ le
feuilletage holomorphe singulier sur $\mathcal{M}$ transformé de
$\fol{}$ par $E$. La réduction s'obtient par une succession d'éclatements
ponctuels des singularités non réduites apparaissant au fur et à mesure.
À chaque étape intermédiaire de la réduction de $\fol{}$, l'image
réciproque de $0$ est une union finie de diviseurs $\ww P_{1}\left(\ww C\right)$
d'auto\textendash{}intersection négative, que l'on nomme \textbf{composantes},
se croisant transversalement en un certain nombres de points, appelés
\textbf{coins}. 
\begin{rem}
\label{rem_reduction=00003Darbre}\citep{Seiden} Par un coin passent
exactement deux composantes, qui seront qualifiées d'adjacentes. De
plus le graphe d'incidence des composantes est un arbre connexe.\end{rem}
\begin{defn}
\label{def_compo_initiale}~
\begin{enumerate}
\item Une \textbf{branche morte} $B$ de $\hat{\fol{}}$ est une union maximale
de composantes adjacentes, chacune d'un des trois types suivants~:

\begin{itemize}
\item possédant au plus deux coins et aucune autre singularité de $\hat{\fol{}}$,
\item possédant exactement un coin et aucune autre singularité de $\hat{\fol{}}$
(\textbf{extrémité} de $B$),
\item possédant un coin et une autre singularité de $\hat{\fol{}}$ (\textbf{composante
d'attache} de $B$), 
\end{itemize}

ayant exactement une extrémité et une composante d'attache. Le graphe
d'incidence de ces composantes est donc un arbre ayant la combinatoire
d'une chaîne, c'est pourquoi on appellera parfois \textbf{maillons}
les composantes formant $B$. La singularité de $\hat{\fol{}}$ distincte
du coin de la composante d'attache sera appelée \textbf{point d'attache}.

\item Une \textbf{composante initiale} de $\hat{\fol{}}$ est une composante
non dicritique du diviseur exceptionnel $E^{-1}\left(0\right)$, à
laquelle est attachée au moins deux branches mortes et ne portant
en sus de ces points d'attache qu'une seule singularité de $\hat{\fol{}}$.
Notons que rien n'interdit à $\mathcal{C}$ de croiser une composante
dicritique de $\hat{\fol{}}$.
\end{enumerate}
\end{defn}

\section{\label{sec:Incompress_ND}Incompressibilité des singularités non
dégénérées solitaires}

Ici on étudie un feuilletage pris sous la forme~\eqref{eq:prepa_non-degenere}.
Soit $\mathcal{V}\subset\mathcal{B}$ un polydisque $\rho_{0}\ww D\times r_{0}\ww D$
choisi assez petit pour garantir que $R$ est holomorphe sur un voisinage
de son adhérence. On considère le domaine
\begin{eqnarray*}
\mathcal{V}_{}^{*} & := & \mathcal{V}\backslash\left\{ x=0\right\} 
\end{eqnarray*}
 sur lequel $\fol{}$ est partout transverses aux fibres de $\Pi$.
Dans tout l'article on travaillera sur le revêtement universel de
$\mathcal{V}^{*}$ donné par
\begin{eqnarray*}
\mathcal{E}\,:\,\tilde{\mathcal{V}} & \longrightarrow & \mathcal{V}^{*}\\
\left(z,y\right) & \longmapsto & \left(\exp z,y\right)\,.
\end{eqnarray*}
On chapeautera d'un \og \textasciitilde{} \fg{} les objets tirés
en arrière dans cette coordonnées, et définissons en particulier la
projection
\begin{eqnarray*}
\tilde{\Pi}\,:\,\left(z,y\right) & \longmapsto & z\,.
\end{eqnarray*}

Le but de cette section est de donner la preuve du Théorème~A quand
la singularité n'est pas un nœud\textendash{}col. Celle\textendash{}ci
découle du résultat quantitatif ci\textendash{}dessous.
\begin{prop}
\label{prop:lif_scnx_ND}Il existe $\rho_{0}>0$ et $r_{0}>0$ assez
petits tels que, en notant $\mathcal{U}\left(\rho,r\right):=\rho\ww D\times r\ww D$
pour chaque $0<\rho\leq\rho_{0}$ et $0<r\leq r_{0}$, toute feuille
de $\tilde{\fol{}}:=\mathcal{E}^{*}\fol{}|_{\mathcal{U}\left(\rho,r\right)}$
est simplement connexe. Le couple $\left(\rho_{0},r_{0}\right)$ convient
dès que
\begin{eqnarray*}
\sup_{\left(x,y\right)\in\mathcal{U}\left(\rho_{0},r_{0}\right)}\left|R\left(x,y\right)\right| & < & 1\,.
\end{eqnarray*}

\end{prop}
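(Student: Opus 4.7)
The plan is to exploit the transversality of $\tilde{\fol{}}$ to the fibers of $\tilde{\Pi}$ on the universal cover. In the coordinate $z=\log x$, a leaf of $\tilde{\fol{}}$ is locally a graph solving the ordinary differential equation
\begin{eqnarray*}
y'(z) & = & \frac{y\,\left(1+R(\exp z,y)\right)}{\lambda},
\end{eqnarray*}
and the hypothesis $\left|R\right|<1$ on $\mathcal{U}(\rho_{0},r_{0})$ keeps the coefficient $(1+R)/\lambda$ in a fixed compact subset of $\ww C_{\neq0}$. First I would observe that a leaf $\tilde{\lif{}}$ of $\tilde{\fol{}}|_{\mathcal{E}^{-1}(\mathcal{U}(\rho,r))}$ can be continued along any smooth base path $t\mapsto z(t)$ in $\left\{ \re{z}<\log\rho\right\}$ as long as $\left|y(t)\right|<r$; this already yields the announced fact that $\partial\tilde{\lif{}}\subset\partial\mathcal{U}(\rho,r)$. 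The variational identity
\begin{eqnarray*}
\frac{d}{dt}\log\left|y(t)\right|^{2} & = & 2\,\re{\frac{1+R\left(\exp z(t),y(t)\right)}{\lambda}\,z'(t)}
\end{eqnarray*}
then shows that at each point $(z,y)\in\tilde{\lif{}}$, an open half-plane of tangent directions forces $\left|y\right|$ to strictly decrease; I would call a path whose tangent vector always lies in this moving half-plane a stability beam. By construction a stability beam starting at a point of $\tilde{\lif{}}$ never reaches $\left\{ \left|y\right|=r\right\}$, hence lifts entirely inside $\tilde{\lif{}}$.

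Second, for any loop $\gamma\,:\,S^{1}\to\tilde{\lif{}}$, I would construct inside the base projection $\tilde{\Pi}(\tilde{\lif{}})$ a homotopy of $\tilde{\Pi}\circ\gamma$ to a loop bounding a region of empty interior, using only stability beams. Concretely, I would attach to each point of $\gamma$ a stability beam heading toward a common asymptotic region where $\re{z}$ is very negative and $\left(1+R\right)/\lambda$ is nearly constant equal to $1/\lambda$; the continuous family of beams then sweeps out a surface realizing the desired homotopy. The main obstacle will be choosing this beam family coherently so that the terminal loop is truly degenerate, i.e. bounds an empty interior region and is therefore trivially contractible in the base. The uniform proximity of $(1+R)/\lambda$ to the constant $1/\lambda$ is essential here: it ensures that stability cones at nearby points of $\tilde{\lif{}}$ are nearly parallel, so a single common asymptotic direction for the beam family can be chosen, and the swept-out surface immerses without catastrophic self\textendash{}intersection.

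Third, since $\tilde{\fol{}}$ is transverse to the fibers of $\tilde{\Pi}$ on $\tilde{\lif{}}$, the projection $\tilde{\Pi}|_{\tilde{\lif{}}}$ is a local diffeomorphism, and the homotopy constructed above lifts uniquely to a homotopy of $\gamma$ inside $\tilde{\lif{}}$, each intermediate path being a concatenation of stability beams hence of paths already known to lie in the leaf. The terminal loop projects to an empty-interior region and is therefore itself contractible in $\tilde{\lif{}}$. This proves $\pi_{1}(\tilde{\lif{}})=0$, as claimed; the choice of $\rho_{0},r_{0}$ announced in the statement is precisely what makes the stability-beam construction globally well-defined.
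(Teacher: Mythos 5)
Your first step coincides with the paper's: transversality of $\tilde{\fol{}}$ to the fibres of $\tilde{\Pi}$ confines $\partial\tilde{\lif{}}$ to $\partial\mathcal{U}\left(\rho,r\right)$, and the variational identity for $\ln\left|y\right|$ identifies the directions of strict decrease. The gap is in your second step, and it sits exactly where the hypothesis $\sup\left|R\right|<1$ does its work. The good directions do not form a ``moving half-plane'' that must be patched together coherently: along $z_{\theta}\left(t\right)=z_{*}-t\theta\lambda/\left|\lambda\right|$ one has $\dot{\varphi}=\re{-\frac{\theta}{\left|\lambda\right|}\left(1+\tilde{R}\right)}$, and the bound $\left|R\right|\leq M<1$ gives $\left|\arg\left(1+\tilde{R}\right)\right|\leq\arcsin M$, so the \emph{fixed} cone $\left|\arg\theta\right|<\delta:=\arccos M$ around the single direction $-\lambda/\left|\lambda\right|$ works uniformly at every point of $\mathcal{U}\left(\rho,r\right)$. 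This is precisely the paper's stability beam, and it dissolves the ``coherent choice'' difficulty you identify as the main obstacle. More seriously, your proposed target --- beams heading toward a common region where $\re{z}$ is very negative --- is unreachable in the very case the proposition is needed for: for a quasi-resonant saddle $\lambda\in\ww R_{<0}$ one has $-\lambda/\left|\lambda\right|=1$, so every direction in the stability cone has \emph{positive} real part and the beams move toward $\left\{ \re{z}=\ln\rho\right\} $, never toward $\re{z}\to-\infty$. The heuristic that $\left(1+R\right)/\lambda$ becomes nearly constant there is neither available nor needed.

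The paper's resolution is concrete: the projection $\gamma$ of the loop is compact, hence contained in a band $\left\{ a\leq\re{z}\leq b\right\} $ with $b<\ln\rho$; one fixes a single non-vertical direction $\vartheta$ inside the fixed cone and pushes each point $\gamma\left(t\right)$ along the parallel segment until it meets whichever of the two vertical lines $\left\{ \re{z}=a\right\} $ or $\left\{ \re{z}=b\right\} $ that direction actually reaches. Each segment lies in a stability beam, so the (free) homotopy lifts into $\tilde{\lif{}}$, and the terminal path is tangent to the real one-dimensional foliation $\tilde{\fol{}}|_{\tilde{\Pi}^{-1}\left(I\right)}$ over a segment $I$, whose leaves are contractible. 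This last remark is also what justifies your final assertion: a loop whose projection has empty interior is not automatically null-homotopic in the leaf merely because $\tilde{\Pi}|_{\tilde{\lif{}}}$ is a local diffeomorphism; one needs the triviality of the induced real foliation over the contractible set $I$.
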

Notons que l'ensemble $\mathcal{S}$ des séparatrices de $\fol{}$
coïncide avec $\mathcal{B}\cap\left\{ xy=0\right\} $, de sorte que
les tubes de Milnor\textbf{ }$\mathcal{T}_{\eta}$ sont simples à
décrire. Bien sûr pour chaque $\left(\rho,r\right)$ on peut trouver
$\mathcal{T}_{\eta}\subset\mathcal{U}\left(\rho,r\right)\subset\mathcal{T}_{\rho}$
pour chaque $\eta$ assez petit (quitte à considérer une boule $\mathcal{B}$
assez petite fixée une fois pour toutes), et le groupe fondamental
de $\mathcal{U}\left(\rho,r\right)\backslash\mathcal{S}$ est isomorphe
à celui de $\mathcal{T}_{\rho}\backslash\mathcal{S}$. Le reste de
cette section est consacré à la preuve de la Proposition~\ref{prop:lif_scnx_ND}.

\subsection{Faisceau de stabilité et réduction de la preuve}

Par soucis de simplicité on écrira $\rho$ et $r$ à la place de $\rho_{0}$
et $r_{0}$, c'est\textendash{}à\textendash{}dire $\mathcal{V}=\mathcal{U}\left(\rho,r\right)$.
\begin{defn}
Étant donné un point $z_{*}\in\tilde{\Pi}\left(\tilde{\mathcal{V}}\right)$,
on appelle \textbf{faisceau de stabilité} de sommet $z_{*}$ et d'ouverture
$\frac{\pi}{2}>\delta>0$ la région de $\tilde{\mathcal{V}}$ contenant
$z_{*}$ donnée par 
\begin{eqnarray*}
\beam:= & \left\{ z_{*}-t\theta\frac{\lambda}{\left|\lambda\right|}\,:\,\left|\arg\theta\right|<\delta\,,\, t\geq0\right\} \cap\Pi\left(\tilde{\mathcal{V}}\right) & \,.
\end{eqnarray*}

\end{defn}
\begin{figure}[H]
\hspace*{\fill}\includegraphics[width=7cm]{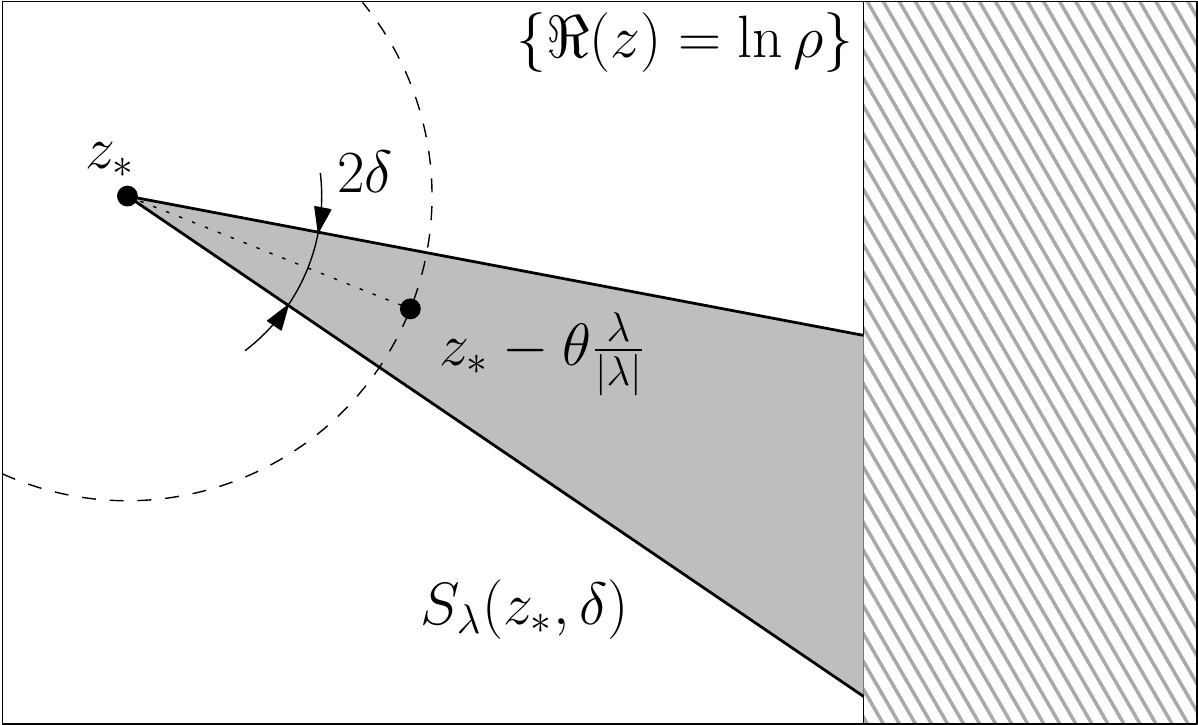}\hspace*{\fill}

\caption{\label{fig:stability_beam_ND}Un faisceau de stabilité (région grisée).}
\end{figure}

Le nom de faisceau de stabilité est justifié par le fait suivant: 
\begin{lem}
\label{lem:stability_beam_ND}Pour tous $\rho>0$ et $r>0$ tels que
\begin{eqnarray*}
M:=\sup_{\left(x,y\right)\in\mathcal{U}\left(\rho,r\right)}\left|R\left(x,y\right)\right| & < & 1
\end{eqnarray*}
on pose $\delta:=\arccos M$. Alors pour tout $p_{*}=\left(z_{*},y_{*}\right)\in\tilde{\mathcal{U}}$,
chaque chemin $\gamma$ basé en $z_{*}$ et d'image incluse dans $\beam$
se relève dans $\tilde{\fol{}}$ en s'appuyant sur $p_{*}$.\end{lem}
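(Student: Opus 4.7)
Le plan est d'\'etablir que la feuille de $\tilde{\fol{}}$ passant par $p_*$ est un graphe holomorphe au-dessus du faisceau $\beam$ tout entier\,; le rel\`eve de tout chemin $\gamma\subset\beam$ en sera alors la restriction. Dans la coordonn\'ee $z=\log x$ la forme $\omega_R$ se lit $\lambda\dd y-y(1+R)\dd z$, et la transversalit\'e de $\tilde{\fol{}}$ aux fibres de $\tilde{\Pi}$ sur $\tilde{\mathcal{U}}$ fait que tout rel\`eve s'appuyant sur $p_*=(z_*,y_*)$ est solution maximale de l'\'equation diff\'erentielle scalaire $\dd y/\dd z=(1+R(\exp z,y))\,y/\lambda$ le long de $\gamma$, qui se prolonge tant que $|y|<r$.

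L'observation cl\'e est la d\'ecroissance du module le long des rayons radiaux issus de $z_*$. Pour un rayon $s\mapsto z_*-s\exp(\ii\psi)\lambda/|\lambda|$ inclus dans $\beam$ (c'est-\`a-dire $|\psi|<\delta$), un calcul imm\'ediat donne
\begin{eqnarray*}
\frac{\dd\log|y|}{\dd s} & = & -\frac{\re{(1+R)\exp(\ii\psi)}}{|\lambda|}\;\leq\;-\frac{\cos\psi-M}{|\lambda|}\;<\;0,
\end{eqnarray*}
gr\^ace \`a $\cos\psi>\cos\delta=M$ et $|R|\leq M$ sur $\tilde{\mathcal{U}}$. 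Le rel\`eve radial est donc globalement d\'efini sur le rayon et satisfait $|y|\leq|y_*|<r$.

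Soit alors $G\subset\beam$ le domaine maximal contenant $z_*$ sur lequel la feuille passant par $p_*$ est le graphe d'une application holomorphe $y\,:\,G\to r\ww D$. L'unique obstruction \`a la prolongation analytique de $y$ est l'explosion $|y|\to r$ \`a l'approche de $\partial G\cap\beam$. Or pour tout $w_0\in\beam$, le segment radial $[z_*,w_0]$ est contenu dans $\beam$ par convexit\'e de ce secteur (d'ouverture $2\delta<\pi$), et son rel\`eve est par l'\'etape pr\'ec\'edente enti\`erement contenu dans $\tilde{\mathcal{U}}$. La prolongation analytique de $y$ le long de ce segment ne rencontre donc aucun obstacle, ce qui force $w_0\in G$, et finalement $G=\beam$. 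Le rel\`eve d'un chemin quelconque $\gamma\subset\beam$ s'appuyant sur $p_*$ est alors donn\'e par $t\mapsto(\gamma(t),y(\gamma(t)))$, automatiquement contenu dans $\tilde{\mathcal{U}}$.

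L'obstacle technique principal est d'assurer l'absence de monodromie dans la construction du graphe global : la simple connexit\'e du secteur $\beam$ joue ici un r\^ole essentiel, en garantissant que deux prolongements holomorphes de $y$ le long de chemins homotopes co\"incident aux extr\'emit\'es, d\`es lors que les rel\`eves interm\'ediaires restent dans $\tilde{\mathcal{U}}$\,---\,condition assur\'ee par le contr\^ole radial ci-dessus, qui fournit simultan\'ement l'existence et une borne uniforme sur le rel\`eve.
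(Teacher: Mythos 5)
Your proof is correct and follows essentially the same route as the paper's: the core step in both is the computation showing that $\log\left|y\right|$ strictly decreases along each ray $t\mapsto z_{*}-t\theta\frac{\lambda}{\left|\lambda\right|}$ of the beam because $\re{\theta\left(1+\tilde{R}\right)}\geq\cos\left(\arg\theta\right)-M>0$ when $\left|\arg\theta\right|<\delta=\arccos M$, so the lift never reaches $\left\{ \left|y\right|=r\right\} $. Your explicit packaging of this into a maximal graph domain $G=\beam$ over the star-shaped beam merely spells out what the paper leaves implicit in its final ``la conclusion suit''.
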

\begin{rem}
~
\begin{enumerate}
\item La conclusion du lemme ci\textendash{}dessus est bien que $\gamma$
se relève \emph{en totalité} dans la feuille de $\tilde{\fol{}}$
contenant $p$.
\item L'existence des faisceaux de stabilité impose une condition très forte
sur la forme du bord d'une feuille~: celui\textendash{}ci ne peut
pas être trop irrégulier (convexité conique). Il en résulte que le
revêtement universel d'une feuille typique ressemble à la Figure~\ref{fig:revet_univ_feuille_ND}.
\end{enumerate}
\end{rem}
\begin{figure}[H]
\hspace*{\fill}\includegraphics[width=7cm]{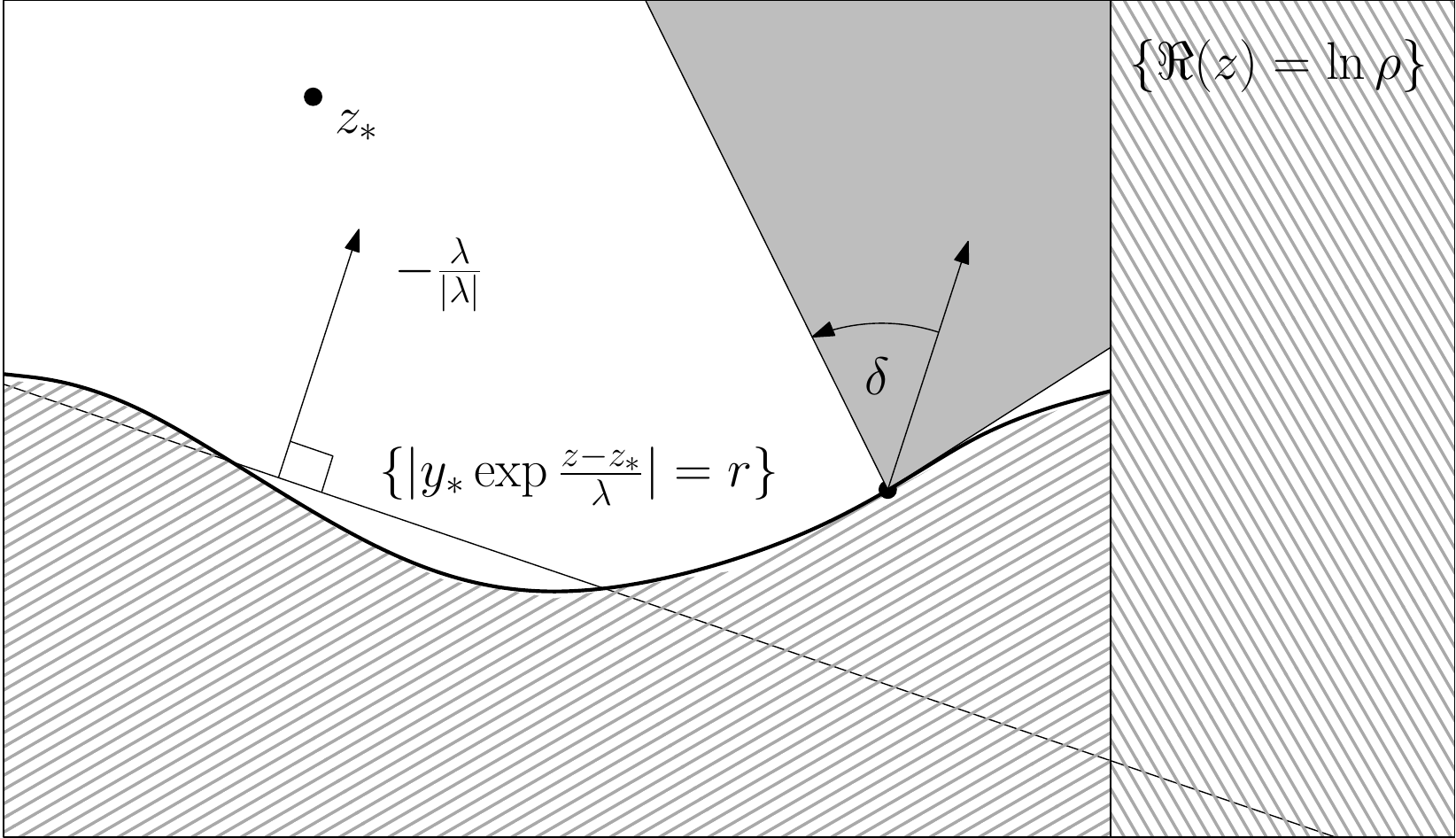}\hspace*{\fill}

\caption{\label{fig:revet_univ_feuille_ND}Le revêtement universel d'une feuille
typique, passant par $p_{*}=\left(z_{*},y_{*}\right)\in\tilde{\mathcal{U}}$
d'une singularité non dégénérée (complémentaire des régions hachurées).
La droite réelle $\left\{ \left|y_{*}\exp\frac{z-z_{*}}{\lambda}\right|=r\right\} $
représente la trace de la feuille du feuilletage linéaire $\tilde{\omega}_{0}$
sur le bord $\tilde{\Pi}\left(\tilde{\mathcal{U}}\right)\times r\ww S^{1}$.}
\end{figure}

Avant de donner la preuve du lemme nous achevons celle de la Proposition~\ref{prop:lif_scnx_ND}.
Pour un lacet $\tilde{\gamma}$ d'une feuille $\tilde{\lif{}}$ de
$\tilde{\fol{}}$ nous notons $\gamma:=\tilde{\Pi}\circ\tilde{\gamma}$
son projeté, dont l'image est un compact contenu dans une bande $\left\{ a\leq\re z\leq b\right\} $
pour deux réels $a\leq b<\ln\rho$. Tout faisceau de stabilité $\beam[\gamma\left(t\right)]$
coupe l'une ou l'autre des droites formant le bord de la bande, disons
$\left\{ \re z=a\right\} $ pour fixer les idées, en suivant une droite
parallèle de direction $\vartheta$ fixée. Pour $t\in\left[0,1\right]$
on considère une paramétrisation $s\in\left[0,1\right]\mapsto h_{s}\left(t\right)$
du segment reliant $\gamma\left(t\right)$ à $\left\{ \re z=a\right\} $
parallèlement à $\vartheta$. Alors $\left(h_{s}\right)_{s\in\left[0,1\right]}$
est une homotopie (libre) entre $\gamma$ et un chemin dont l'image
est un segment $I$. Puisque l'homotopie se déroule dans une union
de faisceaux de stabilité elle se relève en une homotopie dans $\tilde{\lif{}}$
entre $\tilde{\gamma}$ est un chemin tangent à une feuille de $\tilde{\fol{}}|_{\tilde{\Pi}^{-1}\left(I\right)}$.
Or ce dernier est un feuilletage réel lisse unidimensionnel et transverse
à la projection $\tilde{\Pi}$~: puisque $I$ est contractile ses
feuilles le sont également, et $\tilde{\gamma}$ est finalement homotopiquement
trivial dans $\tilde{\lif{}}$.

\subsection{\label{sub:decroissance_ND}Démonstration du Lemme~\ref{lem:stability_beam_ND}}

Le feuilletage $\tilde{\fol{}}$ est induit par la $1$\textendash{}forme
différentielle
\begin{eqnarray*}
\tilde{\omega}\left(z,y\right) & = & \lambda\dd z-y\left(1+\tilde{R}\left(z,y\right)\right)\dd y
\end{eqnarray*}
où par hypothèse
\begin{eqnarray*}
\tilde{R}\left(z,y\right) & := & R\left(\exp z,y\right)
\end{eqnarray*}
est bornée par $1$ sur $\tilde{\mathcal{V}}$. Prenons $\left(z_{*},y_{*}\right)\in\tilde{\lif{}}$
et pour $\theta\in\ww S^{1}$, $t\geq0$ on note 
\begin{eqnarray*}
z_{\theta}\left(t\right) & := & z_{*}-t\theta\frac{\lambda}{\left|\lambda\right|}\,.
\end{eqnarray*}
Puisque les feuilles de $\tilde{\fol{}}$ sont transverses aux fibres
de la projection $\tilde{\Pi}$, le bord de $\tilde{\lif{}}$ est
contenu dans le bord $\left\{ \left|y\right|=r\mbox{ ou }\re z=\ln\rho\right\} $.
Pour garantir que les chemins $t\geq0\mapsto z_{\theta}\left(t\right)$
des faisceaux de stabilité se relèvent dans $\tilde{\lif{}}$ en $t\geq0\mapsto\left(z_{\theta}\left(t\right),y_{\theta}\left(t\right)\right)$,
il suffit d'assurer que $\left|y_{\theta}\right|$ est une fonction
décroissante. Étudions\textendash{}donc les variations de 
\begin{eqnarray*}
\varphi\left(t\right) & := & \ln\left|y_{\theta}\left(t\right)\right|=\frac{1}{2}\ln\left(y_{\theta}\left(t\right)\overline{y_{\theta}\left(t\right)}\right)\,.
\end{eqnarray*}
En utilisant la relation 
\begin{eqnarray*}
\frac{\dot{y_{\theta}}}{\dot{z_{\theta}}} & = & \frac{y_{\theta}\left(1+\tilde{R}\left(z_{\theta},y_{\theta}\right)\right)}{\lambda}
\end{eqnarray*}
on trouve
\begin{eqnarray*}
\dot{\varphi} & = & \re{-\frac{\theta}{\left|\lambda\right|}\left(1+\tilde{R}\left(z_{\theta},y_{\theta}\right)\right)}\,.
\end{eqnarray*}
Maintenant le membre de droite est strictement négatif dès que $\left|\arg\theta\right|<\delta$.
Dès lors, tout rayon $t\ge0\mapsto z_{\theta}\left(t\right)$ du faisceau
de stabilité $\beam$ se relève dans la feuille $\tilde{\lif{}}$,
puisque $\left|y_{\theta}\right|$ est décroissante, tant que $\re{z_{\theta}\left(t\right)}$
reste plus petit que $\ln\rho$. La conclusion suit.

\section{\label{sec:Incompress_NC}Incompressibilité du nœud\textendash{}col
solitaire}

On répète les arguments développés en section précédente dans le cas
d'un nœud\textendash{}col. Ici encore le Théorème~A découle de la
\begin{prop}
\label{prop:lif_scnx_NC}Il existe $\rho_{0}>0$ et $r_{0}>0$ assez
petits, et pour tous $0<\rho\leq\rho_{0}$ et $0<r\leq r_{0}$ un
domaine $\mathcal{U}\left(\rho,r\right)\subset\mathcal{V}$, fibré
en disques au\textendash{}dessus de $\rho\ww D\times\left\{ 0\right\} $,
tels que les proposition suivantes tiennent.
\begin{enumerate}
\item Chaque feuille de $\tilde{\fol{}}:=\mathcal{E}^{*}\fol{}|_{\mathcal{U}\left(\rho,r\right)}$
est simplement connexe. 
\item Si le nœud\textendash{}col est convergent, alors $\mathcal{U}\left(\rho,r\right)$
est le polydisque standard et $\left(r_{0},\rho_{0}\right)$ convient
dès que
\begin{eqnarray*}
\sup_{\left(x,y\right)\in\mathcal{U}\left(r_{0},\rho_{0}\right)}\left|R\left(x,y\right)\right| & < & 1\,.
\end{eqnarray*}

\end{enumerate}
\end{prop}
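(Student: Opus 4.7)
The plan is to transpose the scheme of Section~\ref{sec:Incompress_ND}---stability beams plus a variational argument in $z = \log x$---to the saddle-node setting, treating first the convergent case on a standard polydisk and then reducing the divergent case to it by sectorial rectifications.

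In the \emph{convergent case}, the fibered analytic change $(x,y)\mapsto(x,y-\hat s(x))$ rectifies the weak separatrix onto $\{y=0\}$; this curve being a leaf, the prepared form must read $\omega = x^{k+1}\dd y - y(1+R(x,y))\dd x$ with $R$ holomorphic and vanishing at the origin. I would then choose $\rho_0,r_0$ small enough that $\sup|R|<1$ on $\mathcal U(\rho_0,r_0)$ and set $\mathcal U(\rho,r)=\rho\ww D\times r\ww D$. In the coordinate $z=\log x$ the leaf equation becomes $\dot y/\dot z = y(1+R)e^{-kz}$, so along any smooth path $t\mapsto z(t)$ lifted tangentially to the leaf through $(z_*,y_*)$ one computes
\[
\frac{\mathrm d}{\mathrm d t}\log|y(t)| \;=\; \re{(1+R)\,e^{-kz(t)}\,\dot z(t)}.
\]
Setting $\delta:=\arccos(\sup|R|)>0$, I would define the \emph{stability beam} at $z_*$ to be the region swept by paths for which $e^{-kz(t)}\dot z(t)$ remains in the open cone of aperture $\delta$ about the negative real axis: the identity above then forces $|y|$ to decrease strictly, and the beam lifts entirely into the leaf. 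The geometry is clearest in the auxiliary variable $w=-e^{-kz}/k$, in which the equation reads $\dd y = y(1+R)\dd w$ and stability beams become standard sectorial cones opening towards decreasing $\re w$, in direct analogy with Lemma~\ref{lem:stability_beam_ND}.

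Simple-connectedness of each leaf then follows as in Section~\ref{sub:decroissance_ND}: given a loop $\tilde\gamma$ in a leaf, its projection $\tilde\Pi\circ\tilde\gamma$ is slid along stability beams to a loop whose image has empty interior (lying for instance in a single fiber of $\tilde\Pi$, or on an arc of the boundary $\{\re z = \log\rho\}$). This free homotopy, supported in a union of stability beams, lifts tangentially in the leaf, and the terminal loop is trivial because the fibers of $\tilde\Pi$ carry only a real one-dimensional transverse foliation over a contractible set. In the \emph{divergent case}, the weak separatrix is only defined sectorially, through the $k$-sums $s_j : \sect \to r_0\ww D$; on each sector the fibered coordinate change $\phi_j(x,y):=(x,y-s_j(x))$ reduces $\omega$ on $\sect\times r\ww D$ to the convergent-case form, which yields simply connected sectorial leaves. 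The full domain is then
\[
\mathcal U(\rho,r) := \bigcup_{j\in\zsk}\phi_j^{-1}\!\left(\sect\times r\ww D\right),
\]
which fibers over $\rho\ww D$ with disk fibers. Since the cone of stability directions at each point of $\sect$ stays (for $\beta$ small) inside the open sector, any loop in a leaf of $\tilde{\fol{}}|_{\mathcal U(\rho,r)}$ can be cut into finitely many sub-arcs each supported in one sectorial piece and contracted there; the sectorial trivializations patch into a trivialization of the full loop.

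The main difficulty distinguishing the saddle-node case from the reduced non-degenerate case is that the direction of steepest $|y|$-descent rotates with $z$ through the factor $e^{-kz}$, rather than being the constant $-\lambda/|\lambda|$. Passing to the $w$-variable is the natural linearizing device, and the bulk of the work will consist in checking that the resulting stability beams, pulled back to the $z$-plane, still form a family ample enough to contract arbitrary loops and---in the divergent case---stay inside a single sector $\sect$; this last requirement is what dictates the calibration of the aperture $\beta$ and of the radii $\rho,r$.
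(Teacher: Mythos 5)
Your strategy coincides with the paper's: stability beams in the coordinate $z=\log x$ along which $\left|y\right|$ decreases strictly, a convergent case treated on the standard polydisk under the condition $\sup\left|R\right|<1$, and a reduction of the divergent case by the sectorial sums $s_{j}$, with $\mathcal{U}\left(\rho,r\right)$ assembled from the pieces $S_{j}^{-1}\left(\sect\times r\ww D\right)$. Your beams---curves along which $e^{-kz}\dot{z}$ stays in a cone of aperture $\delta=\arccos\sup\left|R\right|$ around $\ww R_{<0}$---are exactly the trajectories $\dot{z}_{\theta}=-\theta\exp\left(kz_{\theta}\right)$, $\left|\arg\theta\right|<\delta$, used in the paper, and your variable $w=-e^{-kz}/k$ is a clean way of seeing them as straight cones; the variational computation $\dot{\varphi}=\re{-\theta\left(1+\tilde{R}\right)}$ is the same.

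The gap is in the contraction step, which is precisely where the saddle\textendash{}node case differs from the non\textendash{}degenerate one and which you explicitly defer to \emph{the bulk of the work}. Your parenthetical target---sliding the projected loop onto a single fiber of $\tilde{\Pi}$ or onto an arc of $\left\{ \re z=\ln\rho\right\} $---cannot be reached in general: since the descent direction rotates with $z$, beams issued from the node bands $\left\{ \cos\left(kz\right)>0\right\} $ flow towards $\re z\to-\infty$ while those issued from the saddle bands $\left\{ \cos\left(kz\right)<0\right\} $ flow towards $\left\{ \re z=\ln\rho\right\} $, so a loop whose projection meets several bands cannot be pushed along beams onto any single fiber or boundary arc. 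The paper resolves this with a two\textendash{}step homotopy: the saddle portions are flowed onto $\left\{ \cos\left(kz\right)=0\right\} \cup\left\{ \re z=\ln\rho\right\} $ and the node portions onto $\left\{ \re z=a\right\} \cup\Gamma$, where $\Gamma$ is the union of the $z_{1}$\textendash{}trajectories issued from the points $\ln\rho+\ii\frac{\pi}{2k}+\ii\frac{\pi}{k}\ww Z$; the terminal curve bounds a region of empty interior, over which the transverse real one\textendash{}dimensional foliation is trivial. Without this (or an equivalent) explicit target configuration the proof is incomplete. A secondary weak point: in the divergent case, cutting a tangent loop into sub\textendash{}arcs contracted sector by sector and \emph{patching the trivializations} is not a well\textendash{}defined operation; the correct observation (Lemme~\ref{lem:sectorial_beam}) is that each beam based in $\tsect[][\beta']$ stays inside $\tsect$ provided $\delta$ (not $\beta$) is taken small, so the beam\textendash{}lifting lemma is local and the single global homotopy of the convergent case applies verbatim.
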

L'ensemble $\mathcal{S}$ des séparatrices de $\fol{}$ est inclus
dans $\mathcal{B}\cap\left\{ xy=0\right\} $ et les tubes de Milnor\textbf{
}$\mathcal{T}_{\eta}$ sont encore simples à décrire. Le reste de
cette section est consacré à la preuve de la Proposition~\ref{prop:lif_scnx_NC}.

\subsection{Faisceau de stabilité et réduction de la preuve}

Par soucis de simplicité on écrira $\rho$ et $r$ à la place de $\rho_{0}$
et $r_{0}$. Fixons un point $p_{*}:=\left(z_{*},y_{*}\right)\in\tilde{\mathcal{V}}$.
Pour $\theta\in\ww S^{1}$ on construit le chemin $z_{\theta}\,:\, t\geq0\mapsto z_{\theta}\left(t\right)$
solution de
\begin{eqnarray}
\dot{z}_{\theta}\left(t\right) & = & -\theta\exp\left(kz_{\theta}\left(t\right)\right)\label{eq:iso_arg}
\end{eqnarray}
avec la condition initiale $z_{\theta}\left(0\right)=z_{*}$. 

\begin{figure}[H]
\hspace*{\fill}\includegraphics[scale=0.9]{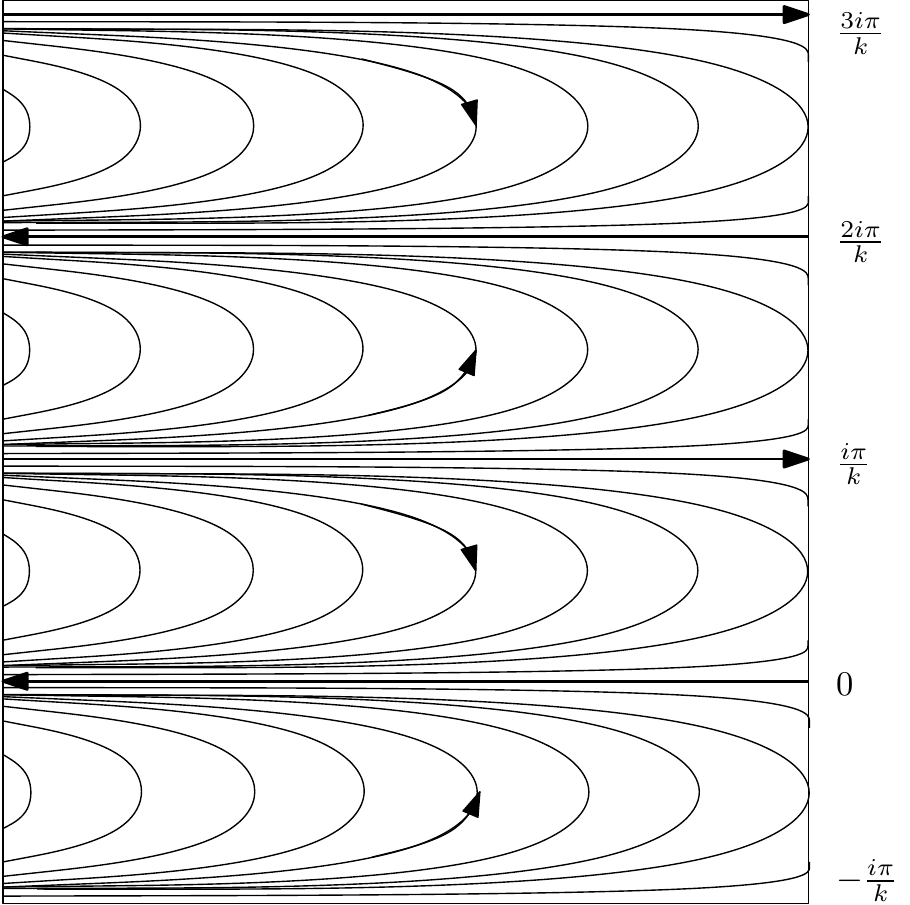}\hspace*{\fill}

\caption{\label{fig:z_1}Les courbes intégrales $z_{1}$. On obtient $z_{\theta}$
en translatant $z_{1}$ de $-\ii\frac{\arg\theta}{k}$}
\end{figure}

On a la relation implicite
\begin{eqnarray*}
\exp\left(kz_{\theta}\left(t\right)\right) & = & \frac{\exp\left(kz_{*}\right)}{1+k\theta t\exp\left(kz_{*}\right)}\,.
\end{eqnarray*}
On dispose également de l'asymptotique 
\begin{eqnarray*}
\begin{cases}
\re{z_{\theta}\left(t\right)} & \sim_{t\to+\infty}-\frac{1}{k}\ln t\\
\im{z_{\theta}\left(t\right)} & \sim_{t\to+\infty}\im{z_{*}}-\frac{1}{k}\arg\theta
\end{cases} &  & \,.
\end{eqnarray*}
Cependant il se peut qu'avant de tendre vers $-\infty$ la partie
réelle de $z_{\theta}$ dépasse $\ln\rho$, c'est en particulier le
cas lorsque $\theta\exp\left(kz_{*}\right)<0$ puisqu'alors $\exp\left(kz_{\theta}\right)$
admet un pôle, et donc la trajectoire $z_{\theta}$ quitte le domaine
$\left\{ \re z<\ln\rho\right\} $.
\begin{defn}
On appelle \textbf{faisceau de stabilité} de sommet $z_{*}$ et d'ouverture
$\frac{\pi}{2}>\delta>0$ la région de $\tilde{\mathcal{U}}\left(\rho,r\right)$
contenant $z_{*}$ donnée par
\begin{eqnarray*}
\beam & := & \left\{ z_{\theta}\left(t\right)\,:\,\left|\arg\theta\right|<\delta\,,\, t\geq0\,,\,\left(\forall\tau\leq t\right)\, z_{\theta}\left(\tau\right)\in\tilde{\mathcal{U}}\right\} \,.
\end{eqnarray*}

\end{defn}
\begin{figure}[H]
\hfill{}\subfloat{\includegraphics[width=6.5cm]{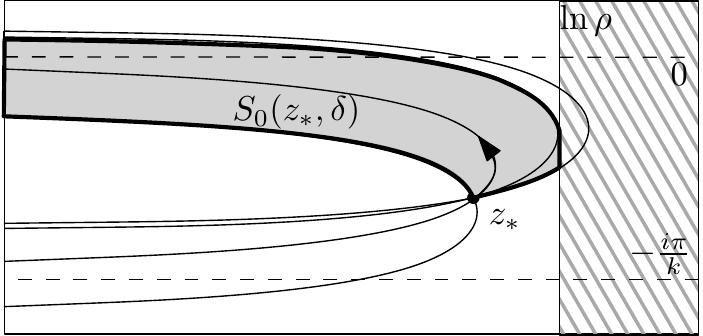}}\hfill{}

\subfloat{\includegraphics[width=6.5cm]{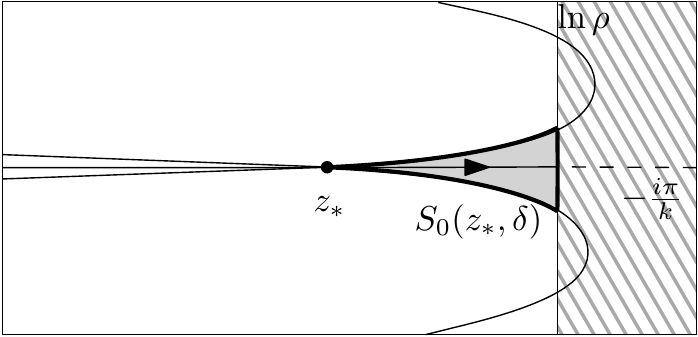}}\hfill{}\subfloat{\includegraphics[width=6.5cm]{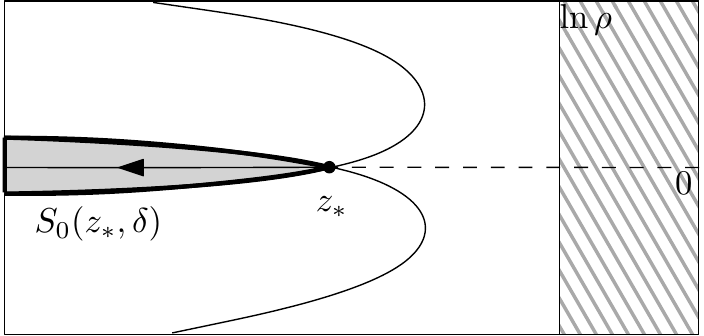}}

\caption{Des faisceaux de stabilité (région uniformément grisée).}
\end{figure}

\begin{lem}
\label{lem:stability_beam_NC}Il existe un domaine $\mathcal{U}\left(\rho,r\right)$,
fibré en $r\ww D$ au\textendash{}dessus de $\rho\ww D\times\left\{ 0\right\} $,
tel que, pour tout $p_{*}=\left(z_{*},y_{*}\right)\in\tilde{\mathcal{U}}\left(r,\rho\right)$,
chaque chemin $\gamma$ basé en $z_{*}$ et inclus dans $\beam[z_{*}][\delta]$
se relève dans $\tilde{\fol{}}$ en s'appuyant sur $p$. Si le nœud\textendash{}col
est convergent alors $\mathcal{U}\left(\rho,r\right)$ est le polydisque
standard.\end{lem}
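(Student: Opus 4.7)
L'approche reproduit la structure employ�e pour la Proposition~\ref{prop:lif_scnx_ND}~: j'�tablirai le long des trajectoires $z_{\theta}$ engendrant le faisceau de stabilit� une d�croissance stricte de $\ln\left|y\right|$, ce qui garantira la persistance du rel�v� dans la feuille tant que $z_{\theta}$ restera dans le domaine.

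Je traiterais d'abord le cas convergent. Apr�s le redressement analytique $\left(x,y\right)\mapsto\left(x,y-\hat{s}\left(x\right)\right)$ la $1$\textendash{}forme s'�crit $\omega_{yR}=x^{k+1}\dd y-y\left(1+R\right)\dd x$ pour un nouveau germe $R\in x^{k}\germ{x,y}$, de sorte que l'�quation des feuilles dans le rev�tement universel devient $\ppp zy=y\left(1+\tilde{R}\right)e^{-kz}$. Coupl�e � l'�quation~\eqref{eq:iso_arg} d�finissant $z_{\theta}$, elle donne le long d'une trajectoire $\dot{y}=-\theta\, y\left(1+\tilde{R}\left(z_{\theta},y\right)\right)$, d'o� pour $\varphi\left(t\right):=\ln\left|y\left(t\right)\right|$
\begin{eqnarray*}
\dot{\varphi} & = & \re{-\theta\left(1+\tilde{R}\left(z_{\theta},y\right)\right)}\,,
\end{eqnarray*}
par un calcul identique � celui de la Section~\ref{sub:decroissance_ND}. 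Puisque $1+\tilde{R}$ vit dans le disque euclidien de centre $1$ et de rayon $M:=\sup\left|\tilde{R}\right|<1$, cette quantit� est strictement n�gative d�s que $\left|\arg\theta\right|<\delta:=\arccos M$. On en d�duit que $\left|y\right|$ d�cro�t strictement le long de chaque rayon, si bien que le rel�v� reste confin� dans le polydisque standard $\mathcal{U}\left(\rho,r\right):=\rho\ww D\times r\ww D$.

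Je me ramenerais ensuite au cas pr�c�dent pour traiter le cas divergent. Sur chaque secteur $\sect$ la s�paratrice sectorielle $s_{j}$ est holomorphe et born�e~; le redressement $\left(x,y\right)\mapsto\left(x,y-s_{j}\left(x\right)\right)$ met le feuilletage sous la forme $x^{k+1}\dd y-y\left(1+R_{j}\right)\dd x$ pour un nouveau germe $R_{j}$ holomorphe et born� sur le polydisque sectoriel. Je poserais alors
\begin{eqnarray*}
\mathcal{U}\left(\rho,r\right) & := & \bigcup_{j\in\zsk}\left\{ \left(x,y\right)\,:\, x\in\sect\,,\,\left|y-s_{j}\left(x\right)\right|<r\right\} \,,
\end{eqnarray*}
qui est bien fibr� en disques de rayon $r$ au\textendash{}dessus de $\rho\ww D\times\left\{ 0\right\} $~; pour $\rho,r$ assez petits chaque $R_{j}$ sera de norme strictement inf�rieure � $1$ sur le secteur correspondant, et l'argument de d�croissance de $\ln\left|y-s_{j}\left(x\right)\right|$ s'appliquera \emph{verbatim}.

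La principale difficult� technique portera sur le recollement sectoriel, puisqu'un faisceau de stabilit� peut a priori traverser le bord entre deux secteurs adjacents $\sect$ et $\sect[j+1]$. L'asymptotique $\im{z_{\theta}\left(t\right)}\to\im{z_{*}}-\nf{\arg\theta}k$ confine toutefois le faisceau dans une bande verticale d'�paisseur $\nf{2\delta}k$ dans la coordonn�e $z$~; en prenant $\delta<\beta$ on s'assurera qu'une trajectoire quittant $\sect$ entre n�cessairement dans l'intersection $\sect\cap\sect[j+1]$, sur laquelle $s_{j}-s_{j+1}$ est exponentiellement plat. Quitte � r�duire $r$, le changement de carte $y-s_{j}\leftrightarrow y-s_{j+1}$ pr�servera l'appartenance au polydisque sectoriel suivant ainsi que la d�croissance, ce qui permettra de propager le rel�v� sans obstruction d'un domaine sectoriel au suivant.
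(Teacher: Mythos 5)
Your treatment of the convergent case is correct and coincides with the paper's: after straightening the weak separatrix one gets $\dot{\varphi}=\re{-\theta\left(1+\tilde{R}\left(z_{\theta},y_{\theta}\right)\right)}<0$ for $\left|\arg\theta\right|<\arccos M$, so the standard polydisk works.

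The divergent case, however, contains a genuine gap. The confinement you invoke --- that a stability beam sits in a vertical band of thickness $\nf{2\delta}{k}$ --- is false. Along a trajectory one has $\im{kz_{\theta}\left(t\right)}=k\im{z_{*}}-\arg\left(1+k\theta t\exp\left(kz_{*}\right)\right)$, and the argument swept by the ray $t\mapsto1+k\theta t\exp\left(kz_{*}\right)$ tends to $\arg\theta+\arg\left(\exp\left(kz_{*}\right)\right)$ reduced modulo $2\pi$, which gets arbitrarily close to $\pm\pi$ when $z_{*}$ lies in a saddle band. A beam based at such a point therefore spreads vertically over nearly $\nf{2\pi}{k}$; this is exactly why the sectors $\sect$ must have opening $\nf{2\pi}{k}+2\beta$. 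The geometric fact the paper actually relies on (Lemma~\ref{lem:sectorial_beam}) is that $t\mapsto\im{z_{\theta}\left(t\right)}$ is monotone with limit within $\nf{\delta}{k}$ of a node direction in $\nf{2\pi}{k}\ww Z$, so that for $\delta$ small enough every beam based in the shrunk band $\tsect[][\beta']$ remains inside $\tsect$; since the bands $\tsect[][\beta']$ cover $\tilde{\Pi}\left(\tilde{\mathcal{V}}\right)$, no change of sectorial chart is ever needed and the convergent argument applies verbatim in each straightened sector. Your gluing step, besides resting on the false premise above, is not actually established: at a crossing point you only get $\left|y-s_{j+1}\right|\leq\left|y-s_{j}\right|+\sup\left|s_{j}-s_{j+1}\right|$, where the supremum over the overlap up to $\left|x\right|=\rho$ is a fixed positive constant independent of $r$, while $\left|y-s_{j}\right|$ may be arbitrarily close to $r$ there; membership in the next sectorial polydisk, hence the continuation of the lift inside $\tilde{\mathcal{U}}$, does not follow, and shrinking $r$ does not repair it.
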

\begin{rem*}
L'existence des faisceaux de stabilité impose ici aussi une condition
très forte sur la régularité du bord d'une feuille. Il en résulte
que le revêtement universel d'une feuille typique ressemble à la Figure~\ref{fig:revet_univ_feuille}.
La présence des \og langues \fg{} d'étendue infinie sur la gauche
de la figure provient du comportement \og col \fg{}. En effet dans
les secteurs $\left\{ \left|\arg\left(x^{k}\right)-\pi\right|<\frac{\pi}{2}\right\} $,
délimités par des pointillés sur la figure, l'ordonnée de la feuille
est de l'ordre de $\exp\frac{-1}{kx^{k}}$ et tend fortement vers
l'infini quand $x$ se rapproche du point singulier. Au contraire
dans les secteurs \og nœuds \fg{} $\left\{ \left|\arg\left(x^{k}\right)\right|<\frac{\pi}{2}\right\} $
les feuilles tendent platement vers $0$ (voir par exemple~\citep{Tey-SN}).
Nous donnons plus de détails en Section~\ref{sub:blocs_adapt_cont}
concernant la topologie du bord de $\tilde{\Pi}\left(\tilde{\lif{}}\right)$.
\end{rem*}
\begin{figure}[H]
\hspace*{\fill}\includegraphics[width=7cm]{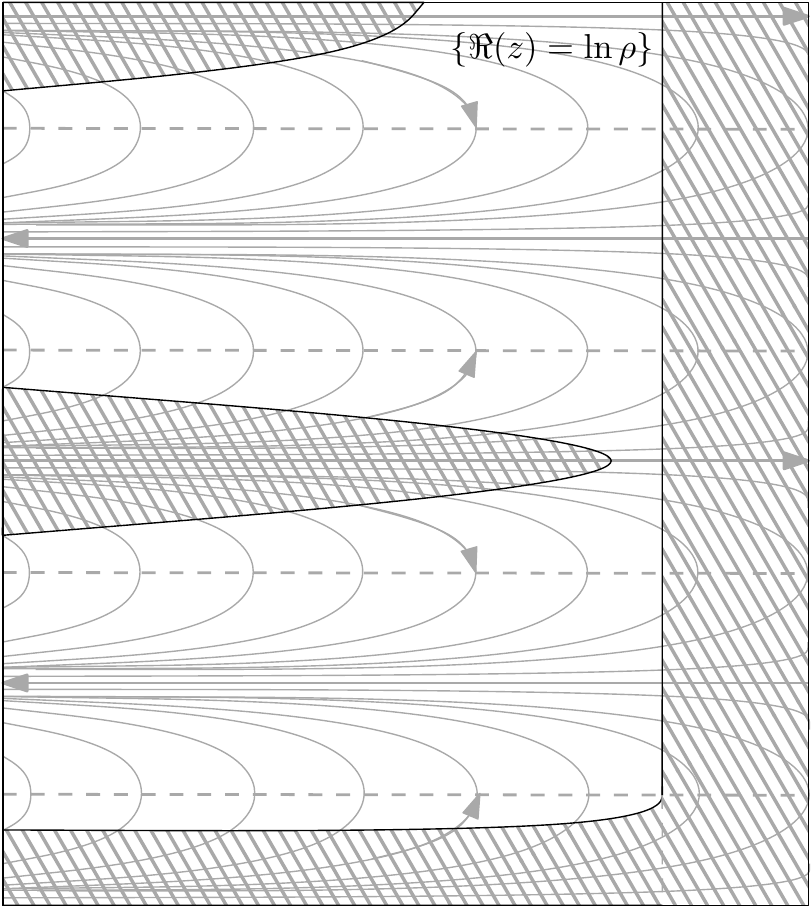}\hspace*{\fill}

\caption{\label{fig:revet_univ_feuille}Le revêtement universel d'une feuille
typique d'un nœud\textendash{}col (complémentaire des régions hachurées).}
\end{figure}

Ce lemme sera prouvé dans les prochaines sous\textendash{}sections
en deux étapes: d'abord le cas des nœuds\textendash{}cols convergents
puis celui des divergents. Expliquons au préalable en quoi il suffit
à garantir la simple\textendash{}connexité d'une feuille $\tilde{\lif{}}$.
La stratégie est similaire à celle introduite pour les singularités
non dégénérées~: il suffit de construire une homotopie (libre) $\left(h_{s}\right)_{s\in\left[0,1\right]}$
entre la projection $\gamma:=\tilde{\Pi}\circ\tilde{\gamma}$ d'un
cycle tangent $\tilde{\gamma}$ et un chemin bordant une région d'intérieur
vide, telle que pour $t$ fixé $s\mapsto h_{s}\left(t\right)$ est
un chemin de $\beam[h_{0}\left(t\right)][\delta]$, qui se relève
donc dans $\tilde{\lif{}}$. Ceci achève la preuve de la Proposition~\ref{prop:lif_scnx_NC}
en invoquant encore une fois l'argument d'un feuilletage réel unidimensionnel
au\textendash{}dessus d'un compact contractile. Le fait que les faisceaux
de stabilité ne sont pas tous dirigés selon une direction donnée à
l'avance constitue la difficulté supplémentaire par rapport au cas
non dégénéré.

\bigskip{}

Le procédé est illustré en Figure~\ref{fig:homotopy_lif}, et correspond
aux deux étapes suivantes.
\begin{itemize}
\item Les parties de l'image de $\gamma$ contenues dans les bandes \og col \fg{}
$\left\{ \cos\left(kz\right)\leq0\right\} $ sont envoyées, en suivant
les chemins $z_{1}$, dans $\left\{ \cos\left(kz\right)=0\right\} \cup\left\{ \re z=\ln\rho\right\} $.
\item Les parties de l'image de $\gamma$ contenues dans les bandes \og nœud \fg{}
$\left\{ \cos\left(kz\right)\geq0\right\} $ sont envoyées, en suivant
les chemins $z_{1}$, dans $\left\{ \re z=a\right\} \cup\Gamma$ où
$\Gamma$ est l'union des images des trajectoires $z_{1}$ issues
des points $\ln\rho+\ii\nf{\pi}{2k}+\ii\nf{\pi}k\ww Z$.
\end{itemize}
\begin{figure}[H]
\hspace*{\fill}\includegraphics[width=12cm]{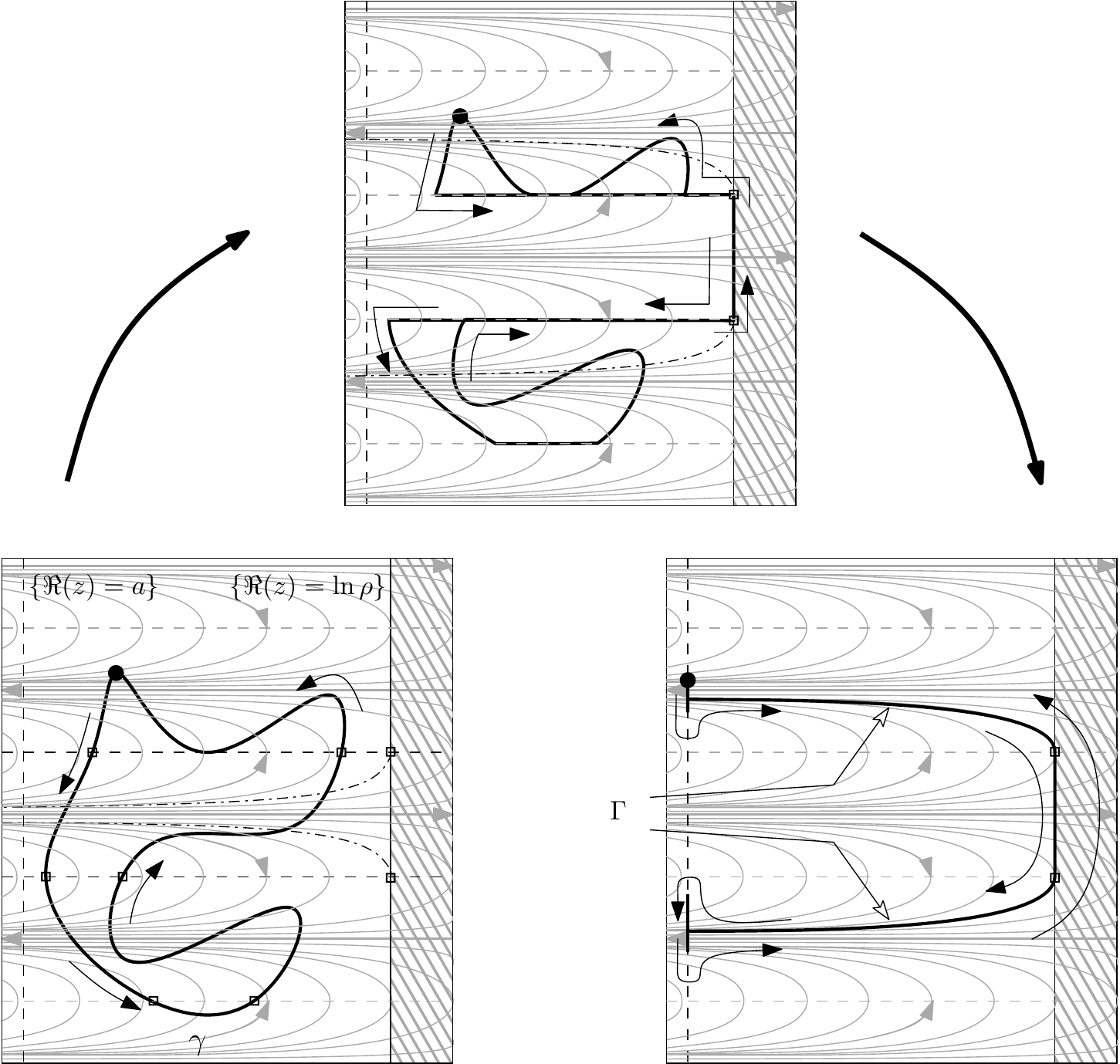}\hspace*{\fill}

\caption{\label{fig:homotopy_lif}}
\end{figure}

\subsection{Le cas convergent}

On prend ici $\mathcal{U}$ sous la forme d'un polydisque $r\ww D\times\rho\ww D$
avec $r,\rho>0$. Dans ce cas on a 
\begin{eqnarray}
\tilde{\omega}_{yR} & := & \mathcal{E}^{*}\omega_{yR}\nonumber \\
 & = & \exp\left(kz\right)\dd y-y\left(1+\tilde{R}\left(z,y\right)\right)\dd z\,.\label{eq:NC_lift}
\end{eqnarray}

Puisque les feuilles de $\tilde{\fol{}}$ sont transverses aux fibres
de la projection $\tilde{\Pi}$, le bord de $\lif p$ est contenu
dans le bord $\left\{ \left|y\right|=r\mbox{ ou }\re z=\ln\rho\right\} $.
Pour garantir que les chemins $t\geq0\mapsto z_{\theta}\left(t\right)$
des faisceaux de stabilité se relèvent dans $\lif p$ en $t\geq0\mapsto\left(z_{\theta}\left(t\right),y_{\theta}\left(t\right)\right)$,
il suffit d'assurer que $\left|y_{\theta}\right|$ est une fonction
décroissante. Mais ceci découle encore des variations de $\varphi:=\ln\left|y_{\theta}\right|$,
à savoir
\begin{eqnarray*}
\dot{\varphi} & = & \re{-\theta\left(1+\tilde{R}\left(z_{\theta},y_{\theta}\right)\right)}\,.
\end{eqnarray*}
Le Lemme~\ref{lem:stability_beam_NC} est donc démontré.

\subsection{Cas divergent}

On va montrer que $\beam[z_{*}][\delta]$ est encore un faisceau de
stabilité à condition de modifier $\mathcal{U}$. Puisque $z_{\theta}=z_{1}-\ii\frac{\arg\theta}{k}$
la propriété suivante est vraie.
\begin{lem}
\label{lem:sectorial_beam}Pour $j\in\ww Z$ on note $\tsect$ la
bande
\begin{eqnarray*}
\tsect & := & \left\{ z\,:\,\re z<\ln\rho\,,\,\left|\im z-\left(2j+1\right)\frac{\pi}{k}\right|<\frac{\pi}{k}+\beta\right\} \,,
\end{eqnarray*}
dont l'image par $z\mapsto x=\exp z$ coïncide avec $\sect$. Étant
donnés $0<\beta'<\beta<\frac{\pi}{2k}$ on peut choisir $\delta>0$
suffisamment petit de sorte que, pour tout $z_{*}\in\tsect[][\beta']$,
le faisceau de stabilité $\beam$ soit inclus dans $\tsect$. 
\end{lem}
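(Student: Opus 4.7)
Le plan est d'établir d'abord le comportement géométrique des trajectoires $z_{1}$ (cas $\theta=1$), puis de transférer cette information aux $z_{\theta}$ au moyen de la relation de translation déjà soulignée avant l'énoncé (voir aussi Figure~\ref{fig:z_1}). Pour les trajectoires $z_{1}$, on s'appuie sur la formule explicite
\begin{eqnarray*}
z_{1}\left(t\right) & = & z_{*}-\frac{1}{k}\log\bigl(1+kt\exp\left(kz_{*}\right)\bigr)
\end{eqnarray*}
en introduisant la courbe auxiliaire $w\left(s\right):=1+s\exp\left(kz_{*}\right)$, $s\geq0$, qui décrit la demi\textendash{}droite issue de $1$ dans la direction $e^{\ii k\im{z_{*}}}$. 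Un calcul direct donne
\begin{eqnarray*}
\frac{\tx d}{\tx d s}\arg w\left(s\right) & = & \frac{e^{k\re{z_{*}}}\sin\left(k\im{z_{*}}\right)}{\left|w\left(s\right)\right|^{2}},
\end{eqnarray*}
si bien que $\arg w$ varie \emph{de façon monotone} tant que la demi\textendash{}droite ne passe pas par $0$ (ce qui se produit uniquement lorsque $\im{z_{*}}\equiv\pi/k\pmod{2\pi/k}$). L'égalité $\im{z_{1}\left(t\right)}-\im{z_{*}}=-k^{-1}\arg w\left(s\right)$, avec le choix continu d'argument $\arg w\left(0\right)=0$, assure que l'amplitude totale de variation en ordonnée est bornée par la distance angulaire entre $1$ et la direction limite $-e^{\ii k\eta}$, où $\eta:=\im{z_{*}}-\left(2j+1\right)\pi/k$. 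Une discussion élémentaire selon le signe et la grandeur de $\eta$ montre alors que, pour $z_{*}\in\tsect[][\beta']$, la trajectoire $z_{1}$ vérifie
\begin{eqnarray*}
\im{z_{1}\left(t\right)} & \in & \left[2j\pi/k-\beta',\,\left(2j+2\right)\pi/k+\beta'\right].
\end{eqnarray*}

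Pour transférer ce contrôle à $z_{\theta}$, on vérifie que la substitution $u\left(t\right):=z_{\theta}\left(t/\left|\theta\right|\right)+\ii\arg\theta/k$ résout $\dot u=-\exp\left(ku\right)$ avec $u\left(0\right)=z_{*}+\ii\arg\theta/k$. L'image de la trajectoire $z_{\theta}$ coïncide donc avec la translatée verticale par $-\ii\arg\theta/k$ de l'image de la trajectoire $z_{1}$ issue de $z_{*}+\ii\arg\theta/k$. Pour $\left|\arg\theta\right|<\delta$, ce nouveau point de départ appartient à $\tsect[][\beta'+\delta/k]$, et l'étape précédente fournit alors $\im{z_{1}}\in\left[2j\pi/k-\beta'-\delta/k,\left(2j+2\right)\pi/k+\beta'+\delta/k\right]$. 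Après retranslation verticale d'amplitude au plus $\delta/k$, on obtient finalement
\begin{eqnarray*}
\im{z_{\theta}\left(t\right)} & \in & \left[2j\pi/k-\beta'-2\delta/k,\,\left(2j+2\right)\pi/k+\beta'+2\delta/k\right]\subset\tsect
\end{eqnarray*}
pourvu que $\delta\leq k\left(\beta-\beta'\right)/2$, ce qui fixe le choix de $\delta$.

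La subtilité principale réside dans le contrôle de $\arg w$ au voisinage de la droite médiane $\left\{ \im{z}=\left(2j+1\right)\pi/k\right\} $, où la demi\textendash{}droite $w\left(s\right)$ frôle l'origine et l'argument varie brutalement. Néanmoins, dans ce régime la trajectoire $z_{1}$ développe une divergence de $\re{z_{1}}$ vers $+\infty$ en temps fini et sort donc de $\tilde{\mathcal U}\subset\left\{ \re{z}<\ln\rho\right\} $ avant que $\im{z_{1}}$ n'ait pu quitter la bande annoncée~; la définition même du faisceau de stabilité $\beam$ (restriction aux $\tau$ tels que $z_{\theta}\left(\tau\right)\in\tilde{\mathcal U}$) évacue alors cette pathologie et préserve l'encadrement vertical établi.
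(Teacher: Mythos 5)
Votre preuve est correcte et suit essentiellement la même démarche que l'article, qui se contente d'observer que $z_{\theta}$ s'obtient de $z_{1}$ par la translation verticale $-\ii\frac{\arg\theta}{k}$ et tient le reste pour acquis (voir la phrase précédant l'énoncé et la Figure~\ref{fig:z_1}). Vous explicitez simplement les détails quantitatifs laissés au lecteur, à savoir la monotonie de $\arg\left(1+s\exp\left(kz_{*}\right)\right)$ le long des trajectoires $z_{1}$, l'encadrement de $\im{z_{1}}$ qui en résulte et le choix $\delta<k\left(\beta-\beta'\right)/2$.
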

En opérant le changement de variables sectoriel pour $j\in\zsk$ 
\begin{eqnarray*}
S_{j}\,:\,\left(z,y\right)\in\tilde{V}_{j}^{\beta} & \longmapsto & \left(z,y-\tilde{s}_{j}\left(\exp z\right)\right)\,,
\end{eqnarray*}
qui redresse la séparatrice sectorielle sur $\left\{ y=0\right\} $,
on se ramène au cas précédent (en effet la preuve du Lemme~\ref{lem:stability_beam_NC}
dans le cas convergent n'utilise pas le caractère holomorphe de la
perturbation $yR$). On prend pour $\mathcal{U}\left(r,\rho\right)$
l'intérieur de l'adhérence du domaine suivant:
\begin{eqnarray*}
 & \mathcal{E}\left(\bigcup_{j\in\zsk}S_{j}^{-1}\left(\sect\times r\ww D\right)\right)\,.
\end{eqnarray*}
 Comme la famille $\left(\tsect[][\beta']\right)_{j\in\ww Z}$ recouvre
$\tilde{\Pi}\left(\tilde{\mathcal{V}}\right)$ et $S_{j}$ est fibrée
en la variable $z$, le Lemme~\ref{lem:stability_beam_NC} est maintenant
démontré.

\section{\label{sec:Construction_originale}Résumé de la construction de Mar\'{i}n\textendash{}Mattei}

La construction consiste à recoller une quantité finie de blocs locaux
$\left(\mathcal{B}_{\alpha}\right)_{\alpha\in\mathcal{A}}$ contenant
les singularités réduites de la réduction de $\fol{}$, deux\textendash{}à\textendash{}deux
disjoints, avec des blocs réguliers recouvrant le reste du diviseur
exceptionnel, afin de localiser (grâce à un théorème de type Van~Kampen)
la propriété d'incompressibilité recherchée. Ces blocs ne peuvent
être arbitraires, mais doivent au contraire satisfaire de «bonnes»
propriétés, que nous détaillons plus bas, et posséder au moins un
degré de liberté dans leur construction (la \og taille \fg{}) gouverné
par une fonction de contrôle. L'assemblage des blocs se fait par induction,
en choisissant une composante du diviseur puis en parcourant l'arbre
de réduction de proche en proche. On arrive dans une nouvelle composante
$D_{j+1}$ par le passage d'un coin, muni d'une taille de sortie spécifiée
sur $D_{j}$, et on assemble les autres blocs présents sur $D_{j+1}$
à celui\textendash{}ci en ajustant leur taille. On passe alors à une
composante adjacente à $D_{j+1}$ si toutes n'ont pas été visitées. 

Tout ceci n'est que partiellement exact, car les blocs réguliers qui
englobent les branches mortes (Section~\ref{sub:Comp-init}) n'ont
aucun degré de liberté~: leur taille est imposée. Une obstruction
d'ordre technique apparaît donc quand $D_{j+1}$ est attachée a au
moins deux branches mortes et ne possède d'autre singularité que celle
partagée avec $D_{j}$, autrement dit une composante initiale. On
doit donc démarrer l'induction par une composante initiale (Section~\ref{sub:Comp-init}),
imposant une taille à sa descendance. Dans le cas d'une courbe généralisée
non dicritique il est bien connu~\citep[p866]{MarMat} qu'il n'existe
qu'au plus une composante initiale, ce qui rend possible l'induction.
De plus la composante initiale est attachée à exactement deux branches
mortes, l'une d'elles ayant comme extrémité le diviseur apparu lors
du premier éclatement. Nous généralisons mot pour mot cette propriété
à toutes les singularités de feuilletages fortement présentables en
Section~\ref{sec:Composantes-initiales}.

\subsection{\label{sub:decoup}Découpage du diviseur exceptionnel en blocs}

Soit $\fol{}$ un feuilletage fortement présentable non réduit. Notons
\begin{eqnarray*}
\hat{\fol{}} & := & E^{*}\fol{}
\end{eqnarray*}
le feuilletage réduit et $\hat{\mathcal{S}}$ l'union des transformées
strictes par $E$ de chaque composante irréductible de $\mathcal{S}$.
Autour de chaque singularité $s$ de $\hat{\fol{}}$ on se donne un
système de coordonnées locales comme en Section~\ref{sec:Notations},
c'est\textendash{}à\textendash{}dire un biholomorphisme $\psi_{s,D}\,:\,\mathcal{U}_{s,D}\to\rho\ww D\times r\ww D$
qui transforme $\hat{\fol{}}$ en le feuilletage donné par $\omega_{R}=0$
et qui envoie une composante du diviseur $D\ni s$ sur $\left\{ y=0\right\} $.
On choisit $\left(\rho_{0},r_{0}\right)$ une fois pour toute de façon
à ce que les Propositions~\ref{prop:lif_scnx_ND} et~\ref{prop:lif_scnx_NC}
tiennent pour toutes les singularités réduites. Nous déterminerons
de proche en proche les valeurs de $0<\rho_{s}\leq\rho_{0}$ et $0<r_{s}\leq r_{0}$
pour lesquelles la construction suivante s'appliquera. 

On notera $\Gamma_{s,D}$ le cercle conforme $\partial\mathcal{U}_{s,D}\cap D$,
pré\textendash{}image par $\psi_{s}$ de $\rho_{s}\ww S^{1}\times\left\{ 0\right\} $.
Dans toute la suite on suppose que le réel positif $\eta_{0}$, paramétrant
l'étendue de la famille de tubes de Milnor $\left(\mathcal{T}_{\eta}\right)_{0<\eta\leq\eta_{0}}$,
est inférieur à une «hauteur d'uniformité des blocs de Milnor» $\eta_{1}>0$
(voir~\citep[Section 2.2, p867]{MarMat}). La valeur de $\eta_{1}$
est telle que les composantes connexes de $T:=E^{-1}\left(\mathcal{T}_{\eta}\right)\backslash\bigcup_{s\in D}\psi_{s,D}^{-1}\left(\rho_{s}\ww S^{1}\times r_{s}\ww D\right)$
sont en bijection avec l'adhérence $\left(K_{\alpha}\right)_{\alpha\in\mathcal{A}}$
des composantes de $E^{-1}\left(0\right)\backslash\bigcup_{s\in D}\Gamma_{s,D}$.
On note alors $\mathcal{T}_{\eta}\left(K_{\alpha}\right)$ les composantes
de $T$ correspondant à $K_{\alpha}$.

Chaque $K_{\alpha}$ est un compact dont le bord est une union finie
de cercles conformes $\mathcal{C}_{\alpha,j}$ paramétrés par des
lacets simples analytiques $\gamma_{\alpha,j}$. À chaque composante
initiale correspond un unique $K_{\alpha}$, celui contant les deux
points d'attache des branches mortes, qui sera lui aussi qualifié
d'initial.
\begin{defn}
Le parcours des $\left(K_{\alpha}\right)_{\alpha\in\mathcal{A}}$
dans l'induction identifie de manière unique, pour chaque $K_{\alpha}$
non initial, une composante $\mathcal{C}_{\alpha,\ell}$ appelée \textbf{composante
d'entrée }de $K_{\alpha}$. Celle\textendash{}ci correspond à la composante
du bord de $K_{\alpha}$ qui sera recollée avec la construction réalisée
à l'étape précédente.
\end{defn}
Les blocs $\mathcal{B}_{\alpha}$ que nous allons décrire seront des
sous\textendash{}ensembles bien particuliers fibrés au\textendash{}dessus
des $K_{\alpha}\backslash\hat{\mathcal{S}}$.

\subsection{Blocs adaptés feuilletés}
\begin{defn}
\label{def_1_connexe}\citep[Définition 1.2.2, p861]{MarMat}Prenons
un feuilletage $\fol{}$ sur un domaine $\mathcal{U}$ et soient $A\subset B$
deux sous\textendash{}ensembles de $\mathcal{U}$. On dit que $A$
est\textbf{ 1\textendash{}connexe} dans $B$ (relativement à $\fol{}$)
si pour chaque feuille $\lif{}$ de $\fol{}$ et tout chemins $\alpha$
de $A$ et $\beta$ de $B\cap\lif{}$ qui sont homotopes dans $B$,
il existe un chemin de $A\cap\lif{}$ homotope à la fois à $\alpha$
dans $A$ et à $\beta$ dans $B\cap\lif{}$.
\end{defn}
Afin de simplifier le présent texte nous ne parlons que de «blocs»,
sans distinguer les deux types «élémentaires» et «fondamentaux» (ces
derniers regroupant plusieurs blocs élémentaires contigus recouvrant
une branche morte). Cette distinction est d'ordre technique et peut
être ignorée ici (voir \citep[Section 2.2, p867]{MarMat} pour plus
de détails).
\begin{defn}
\label{def_blocs}\citep[Definition 2.1.1, p864]{MarMat}La notation
$\partial A$ représente le fermé $\adh{A\backslash\tx{int}\left(A\right)}$.
\begin{enumerate}
\item Nous dirons que $\mathcal{B}_{\alpha}$ est un \textbf{blocs feuilleté
adapté }si les conditions suivantes sont vérifiées~:

\begin{lyxlist}{00.00.0000}
\item [{(BF1)}] chaque composante connexe de $\partial\mathcal{B}_{\alpha}$
est incompressible dans $\mathcal{B}_{\alpha}$,
\item [{(BF2)}] $\hat{\fol{}}$ est transverse à $\partial\mathcal{B}_{\alpha}$
\item [{(BF3)}] $\hat{\fol{}}$ est incompressible dans $\mathcal{B}_{\alpha}$,
\item [{(BF4)}] chaque composante connexe de $\partial\mathcal{B}_{\alpha}$
est $1$\textendash{}connexe dans $\mathcal{B}_{\alpha}$,
\end{lyxlist}
\item Nous dirons que la collections de blocs $\left(\mathcal{B}_{\alpha}\right)_{\alpha\in\mathcal{A}}$
réalise un \textbf{assemblage bord\textendash{}à\textendash{}bord
}de\textbf{ }blocs feuilletés adaptés\textbf{ }si

\begin{itemize}
\item pour chaque $\alpha,\,\beta\in\mathcal{A}$ l'intersection $\mathcal{B}_{\alpha}\cap\mathcal{B}_{\beta}$
est ou bien vide, ou bien une composante connexe de $\partial\mathcal{B}_{\alpha}$
et de $\partial\mathcal{B}_{\beta}$,
\item $E\left(\bigcup_{\alpha\in\mathcal{A}}\mathcal{B}_{\alpha}\right)$
est un voisinage de la singularité $\mathcal{U}$ épointé de $\mathcal{S}$.
\end{itemize}
\end{enumerate}
\end{defn}
Nous énonçons le théorème de localisation~~:
\begin{thm}
\emph{\citep[Théorème 2.1.2, p864]{MarMat}} Si $\mathcal{U}$ est
un assemblage bord\textendash{}à\textendash{}bord de blocs feuilletés
adaptés alors chaque feuille de $\hat{\fol{}}$ est incompressible
dans $\left(\bigcup_{\alpha\in\mathcal{A}}\mathcal{B}_{\alpha},\mathcal{S}\right)$.
\end{thm}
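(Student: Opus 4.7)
L'approche envisag�e est un argument de type Van~Kampen feuillet�, exploitant de mani�re coordonn�e les quatre propri�t�s (BF1)--(BF4) et la condition d'assemblage bord\textendash{}�\textendash{}bord. Partons d'un lacet tangent $\gamma\,:\,S^{1}\to\lif{}$ dans une feuille $\lif{}$ de $\hat{\fol{}}$ dont la classe $\iota^{*}[\gamma]$ est triviale dans $\pi_{1}\left(\bigcup_{\alpha}\mathcal{B}_{\alpha}\setminus\mathcal{S}\right)$, attest�e par une homotopie $H\,:\, D^{2}\to\bigcup_{\alpha}\mathcal{B}_{\alpha}\setminus\mathcal{S}$ avec $H|_{\partial D^{2}}=\gamma$. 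Le but est de produire, � partir de $H$, une homotopie � valeurs dans $\lif{}$ entre $\gamma$ et un point.

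Premi�re �tape (mise en position transverse)~: gr�ce � (BF2) et apr�s perturbation g�n�rique de $H$, on suppose $H$ transverse � $\partial\mathcal{B}:=\bigcup_{\alpha}\partial\mathcal{B}_{\alpha}$. Le squelette $\Theta:=H^{-1}(\partial\mathcal{B})$ est alors un $1$\textendash{}complexe de $D^{2}$, union disjointe finie de cercles et d'arcs propres, chaque composante connexe de $D^{2}\setminus\Theta$ �tant envoy�e dans un unique bloc. La condition d'assemblage bord\textendash{}�\textendash{}bord garantit que l'image de chaque composante de $\Theta$ est contenue dans une unique fronti�re partag�e par deux blocs adjacents.

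Deuxi�me �tape (chirurgie inductive sur $\Theta$)~: on raisonne par r�currence d�croissante sur le nombre de composantes de $\Theta$. Soit $c$ un cercle innermost bordant un disque $D_{c}$ envoy� dans un bloc $\mathcal{B}_{\alpha}$~; alors $H|_{c}$ est un lacet dans une composante $\Sigma$ de $\partial\mathcal{B}_{\alpha}$, homotope � z�ro dans $\mathcal{B}_{\alpha}$ via $H|_{D_{c}}$. Par (BF1), $H|_{c}$ est d�j� homotope � z�ro dans $\Sigma$, ce qui permet de substituer � $H|_{D_{c}}$ une extension � valeurs dans $\Sigma$ puis de l'absorber dans le bloc adjacent, �liminant $c$. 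Soit maintenant $a$ un arc innermost d�limitant avec un sous-arc $\sigma\subset\partial D^{2}$ une r�gion $R$ envoy�e enti�rement dans un bloc $\mathcal{B}_{\alpha}$~; le sous-chemin tangent $H|_{\sigma}\subset\lif{}\cap\mathcal{B}_{\alpha}$ est homotope dans $\mathcal{B}_{\alpha}$ au chemin $H|_{a}$ inscrit dans une composante $\Sigma\subset\partial\mathcal{B}_{\alpha}$. La propri�t� (BF4) fournit alors un chemin $\tilde{a}\subset\Sigma\cap\lif{}$ homotope � la fois � $H|_{a}$ dans $\Sigma$ et � $H|_{\sigma}$ dans $\mathcal{B}_{\alpha}\cap\lif{}$~; on substitue $\tilde{a}$ � $H|_{\sigma}$ dans $\gamma$, d�formation tangente de $\gamma$ dans $\lif{}$, et le nouveau lacet admet une homotopie � squelette strictement plus simple.

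Apr�s un nombre fini d'it�rations on aboutit � une homotopie enti�rement contenue dans un unique bloc $\mathcal{B}_{\alpha_{0}}$ entre un lacet obtenu par d�formations tangentes successives de $\gamma$ et une constante~; (BF3) permet de rendre cette homotopie tangente � $\hat{\fol{}}$, donc � valeurs dans $\lif{}\cap\mathcal{B}_{\alpha_{0}}$. En composant avec les homotopies tangentes accumul�es � chaque �tape par (BF4), on produit la trivialisation tangente cherch�e de $\gamma$ dans $\lif{}$. L'obstacle principal sera la justification rigoureuse que les chirurgies de l'�tape~2 d�croissent effectivement la complexit� combinatoire de $\Theta$~: il faut notamment s'assurer que le chemin $\tilde{a}$ introduit par (BF4) peut �tre choisi sans cr�er de nouvelles intersections parasites avec $\Theta$ dans $\Sigma$, ce qui se g�re par un argument planaire standard (disjonction des innermost) et une analyse soigneuse de la position relative de $\tilde{a}$ par rapport aux autres composantes du squelette.
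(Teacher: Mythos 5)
Le texte ne d�montre pas cet �nonc�~: il s'agit d'un th�or�me import� tel quel de~\citep[Th�or�me 2.1.2, p864]{MarMat}, cit� sans preuve. Votre esquisse reconstitue fid�lement l'argument original de Mar\'{i}n\textendash{}Mattei (transversalisation de l'homotopie, chirurgie sur les composantes \emph{innermost} du squelette $H^{-1}\left(\partial\mathcal{B}\right)$, usage de (BF1) pour les cercles, de (BF4) pour les arcs, puis de (BF3) dans le bloc final)~; c'est donc essentiellement la m�me approche que la preuve cit�e. Deux r�serves mineures~: la mise en position transverse de $H$ rel�ve de la position g�n�rale de l'application et non de (BF2) \textemdash{} cette derni�re sert plut�t � garantir que les chemins tangents traversent $\partial\mathcal{B}_{\alpha}$ transversalement et que les substitutions de l'�tape~2 restent des d�formations tangentielles l�gitimes \textemdash{}, et la d�croissance effective de la complexit� du squelette apr�s chaque chirurgie, que vous signalez vous\textendash{}m�me, est pr�cis�ment le point o� la r�daction compl�te demande le plus de soin.
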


\subsection{Rugosité des ensembles de type suspension}
\begin{defn}
\label{def_suspension}\citep[Definition 3.1.1, p869]{MarMat}Une
composante $B$ de $\partial\mathcal{B}_{\alpha}$ est de \textbf{type
suspension} au\textendash{}dessus de $\mathcal{C}_{\alpha,j}$ s'il
existe un disque analytique fermé $\Sigma$ inclus dans $\Pi^{-1}\left(\gamma_{\alpha,j}\left(0\right)\right)\cap B$,
centré en $\gamma_{\alpha,j}\left(0\right)$ et sur lequel l'holonomie
$\holo{\gamma_{\alpha,j}}$ est holomorphe, tels que $B$ soit l'union
des images des chemins tangents servant à calculer l'holonomie à partir
de $\Sigma$. On écrit alors
\begin{eqnarray*}
B & = & \tx{Susp}_{\gamma_{\alpha,j}}\left(\Sigma\right)\,.
\end{eqnarray*}

\end{defn}
Afin de ne pas créer de topologie artificielle dans l'espace ambiant
il faut assurer que l'intersection $\Sigma\cap\holo{\gamma_{\alpha,j}}\left(\Sigma\right)$
est connexe. Pour cela on introduit une notion de taille contrôlée,
garantissant que cette intersection est étoilée par rapport à son
centre.
\begin{defn}
\label{def_rugo}Pour $\theta\in\ww R$ on pose
\begin{eqnarray*}
\left\{ \left\{ \theta\right\} \right\}  & := & \begin{cases}
\left|\hat{\theta}\right| & \mbox{ si }\theta\in\hat{\theta}+2\pi\ww Z\mbox{ avec }\left|\hat{\theta}\right|<\frac{\pi}{2}\\
\infty & \mbox{ sinon}
\end{cases}\,.
\end{eqnarray*}

\begin{enumerate}
\item Soit $\gamma\,:\,\left[0,1\right]\to\ww C_{\neq0}$ un chemin analytique
lisse par morceaux.

\begin{enumerate}
\item En un point lisse $\gamma'\left(t\right)\neq0$ on définit 
\begin{eqnarray*}
\mathbf{e}\left(\gamma;t\right) & := & \left\{ \left\{ \arg\left(\frac{\dot{\gamma}}{\ii\gamma}\left(t\right)\right)\right\} \right\} \,.
\end{eqnarray*}
Cette quantité admet une limite à gauche et à droite en tout $t\in\left[0,1\right]$,
et on notera $\mathbf{e}\left(\gamma;t\right)$ la plus grande des
deux.
\item Définissons finalement la \textbf{rugosité }de $\gamma$ par 
\begin{eqnarray*}
\mathbf{e}\left(\gamma\right) & := & \max\mathbf{e}\left(\gamma;\left[0,1\right]\right)\,.
\end{eqnarray*}

\end{enumerate}
\item La rugosité est visiblement une propriété intrinsèque de la courbe
orientée paramétrée par $\gamma$. Si $\Gamma$ est l'image du chemin
$\gamma$ on posera alors
\begin{eqnarray*}
\mathbf{e}\left(\Gamma\right) & := & \min\left\{ \mathbf{e}\left(\gamma\right),\mathbf{e}\left(\gamma^{-}\right)\right\} \,,
\end{eqnarray*}
où $\gamma^{-}$ est le chemin d'orientation opposée à $\gamma$.
Si $\Delta$ est un voisinage simplement connexe de $0\in\ww C$,
de bord analytique par morceaux, et si $\mathbf{e}\left(\partial\Delta\right)<\infty$,
alors $\Delta$ est étoilé par rapport à $0$.
\item On appelle \textbf{fonction de contrôle} d'un ensemble de type suspension
$B:=\tx{Susp}_{\gamma}\Sigma$ au\textendash{}dessus d'un lacet simple
$\gamma$, analytique par morceaux, l'élément de $\left[0,\infty\right]$
donné par 
\begin{eqnarray*}
\frak{c}\left(B\right) & := & \max\left\{ \,\mathbf{e}\left(\partial\adh{f\circ E\left(\Sigma\right)}\right)\,,\,\left|\left|B\right|\right|_{\mathcal{S}}\,\right\} 
\end{eqnarray*}
où $\left|\left|B\right|\right|_{\mathcal{S}}:=\sup\left|f\circ E\left(\Sigma\right)\right|$
est la \textbf{taille} de $B$ vis\textendash{}à\textendash{}vis de
la fibration définie par l'équation des séparatrices distinguées  $\mathcal{S}=\left\{ f=0\right\} $.
\end{enumerate}
\end{defn}

\subsection{Blocs adaptés feuilletés contrôlés}
\begin{defn}
\label{def_bloc_controle}\citep[Théorème 3.2.1, p871]{MarMat}Prenons
deux réels $\eta_{0}\leq\eta_{1}$ et $\varepsilon>0$, ainsi que
$\alpha\in\mathcal{A}$ correspondant à une composante $D$ du diviseur
exceptionnel.
\begin{enumerate}
\item On dira que $\mathcal{B}_{\alpha}\subset\mathcal{T}_{\eta_{0}}\left(K_{\alpha}\right)$
est un \textbf{bloc feuilleté contrôlé} si~:

\begin{lyxlist}{00.00.0000}
\item [{(BC1)}] pour tout $\eta_{0}\geq\eta>0$ assez petit $\mathcal{T}_{\eta}\left(K_{\alpha}\right)\subset\mathcal{B}_{\alpha}$
et les inclusions induisent des isomorphismes $\pi_{1}\left(\mathcal{T}_{\eta}\left(K_{\alpha}\right)\backslash D\right)\simeq\pi_{1}\left(\mathcal{B}_{\alpha}\backslash D\right)$
et $\pi_{1}\left(\partial\mathcal{T}_{\eta}\left(K_{\alpha}\right)\backslash D\right)\simeq\pi_{1}\left(\partial K_{\alpha}\backslash D\right)$,
\item [{(BC2)}] $\mathcal{B}_{\alpha}$ est un bloc feuilleté adapté,
\item [{(BC3)}] chaque composante $B_{j}$ de $\partial\mathcal{B}_{\alpha}$
est de type suspension au\textendash{}dessus de $\mathcal{C}_{\alpha,j}$.
\item [{(BC4)}] $\frak{c}\left(B_{j}\right)\leq\varepsilon$.
\end{lyxlist}
\item On dira qu'une composante non initiale $K_{\alpha}$ est \textbf{contrôlable}
s'il existe $c_{\alpha}>0$ et une fonction croissante $\frak{d}_{\alpha}\,:\,\ww R_{>0}\to\ww R_{>0}$,
de limite nulle en $0$, telles que pour tout sous\textendash{}ensemble
$B\subset\mathcal{T}_{\eta}\left(K_{\alpha}\right)$ de type suspension
au\textendash{}dessus de la composante d'entrée $\mathcal{C}_{\alpha,\ell}$
de $K_{\alpha}$, avec $\frak{c}\left(B\right)\leq c_{\alpha}$, il
existe un bloc feuilleté contrôlé dont la composante $B_{\ell}$ du
bord au\textendash{}dessus de $\mathcal{C}_{\alpha,\ell}$ satisfait
les propriétés supplémentaires~:

\begin{lyxlist}{00.00.0000}
\item [{(BC3')}] $B_{\ell}$ est $1$\textendash{}connexe dans $B$,
\item [{(BC4')}] $\frak{c}\left(B_{\ell}\right)\leq\frak{d}_{\alpha}\left(\frak{c}\left(B\right)\right)$.
\end{lyxlist}
\end{enumerate}
\end{defn}
Le Théorème~3.2.1 de~\citep[p871]{MarMat} prouve que chaque $K_{\alpha}$
non initial est contrôlable lorsque $\hat{\fol{}}$ ne contient ni
nœud\textendash{}col ni selle quasi\textendash{}résonnante. Le reste
de la Section~3.2 de~\citep{MarMat} prouve que ce résultat entraîne
le théorème de Mar\'{i}n\textendash{}Mattei. Nous montrons dans la
Section~\ref{sec:presentable} que ce théorème se généralise à tous
les feuilletages fortement présentables.

\section{\label{sec:Composantes-initiales}Composantes initiales}

Nous renvoyons à la Section~\ref{sub:Comp-init} pour les définitions
de «branche morte» et «composante initiale». Cette section est dévolue
à la preuve de la propriété suivante~:
\begin{thm}
\label{thm:composante_init}Soit $\fol{}$ un germe de feuilletage
fortement présentable réduit par un morphisme minimal $E\,:\,\mathcal{M}\to\left(\ww C^{2},0\right)$.
Alors $E^{-1}\left(0\right)$ contient au maximum une composante initiale.
Lorsqu'elle existe, exactement deux branches mortes s'y attachent.
Les singularités présentes aux coins de ces branches mortes (autres
que le point d'attache), sont des selles rationnelles linéarisables.
L'extrémité d'une de ces branches est le premier diviseur créé par
la réduction.
\end{thm}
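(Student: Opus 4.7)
The plan is to adapt verbatim the argument given by Mar\'{i}n\textendash{}Mattei in \citep[Section 4.2]{MarMat} for non\textendash{}dicritical generalized curves, and to verify that the strong presentability hypothesis plays the same structural role there as the generalized curve hypothesis. Both conditions guarantee that no saddle\textendash{}node ever sits at a corner of the exceptional divisor and that the strong separatrix of any saddle\textendash{}node appearing in the reduction is transverse to the divisor component carrying it. This rigidity is precisely what is needed to make the inductive argument on the reduction tree go through without modification.

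First I proceed by induction on the depth of the minimal reduction $E\,:\,\mathcal{M}\to\left(\ww C^{2},0\right)$. After the first blowup producing the divisor $D_{1}$, each non\textendash{}reduced singularity $p\in D_{1}$ of the strict transform of $\hat{\fol{}}$ defines a germ to which the induction applies, provided this germ is itself strongly presentable. The key heredity step is that blowing up cannot turn a non\textendash{}corner saddle\textendash{}node into a corner saddle\textendash{}node, and because the strong separatrix of an interior saddle\textendash{}node is already transverse to $D_{1}$ it cannot be accidentally aligned with a new divisor by subsequent blowups. Granting heredity, each sub\textendash{}tree rooted at a non\textendash{}reduced $p\in D_{1}$ contributes at most one initial component by induction. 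A case analysis on the configuration of singularities on $D_{1}$ shows that the full tree still contains at most one initial component: either $D_{1}$ is itself initial, in which case its two attached dead branches and the one allowed remaining singularity exhaust its budget and force every sub\textendash{}tree to be a dead branch with no internal singularity; or an initial component lies in a single sub\textendash{}tree, and the remaining sub\textendash{}trees must each carry the distinguished singularity preventing them from producing an initial component of their own.

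The claim that exactly two dead branches attach to an initial component $D$ follows from the fact that the only mechanism producing an initial component in a minimal reduction is the resolution of a configuration in which two local branches of the separatrix share a common tangent direction at some intermediate stage, a resolution that creates precisely two dead branches (one for each of the two separated branches) plus a middle component that becomes initial. Tracking $D_{1}$ through the induction then gives the final assertion: when $D_{1}$ is not itself the initial component, the induction locates the initial component in a unique sub\textendash{}tree, and $D_{1}$ becomes the extremity of the dead branch consisting of the ancestors of that sub\textendash{}tree. The statement on the corner singularities is local to each chain: reading a dead branch from its extremity, each interior corner carries a reduced singularity not blown up further by the minimal reduction, so it is automatically non\textendash{}degenerate (saddle\textendash{}nodes at corners being excluded by strong presentability). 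Its eigenvalue ratio is then determined by the continued fraction attached to the Puiseux\textendash{}type resolution encoded in the chain, producing a negative rational, and linearizability follows from minimality since a non\textendash{}linearizable resonant saddle would carry an analytic obstruction whose resolution would require an extra blowup, contradicting minimality of $E$.

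The main obstacle will be the hereditary verification of strong presentability under blowup together with the combinatorial bookkeeping of the case analysis on $D_{1}$. Both are already present in the original Mar\'{i}n\textendash{}Mattei proof and are overcome there by explicit local inspection of the possible configurations; the new ingredient in the present setting is that saddle\textendash{}nodes may now appear at interior points of divisor components, and one must check that the constraints they impose on the reduction tree are compatible with the case analysis, which amounts to the routine observation that an interior saddle\textendash{}node, whose strong separatrix is already transverse to its divisor, behaves combinatorially exactly like an interior non\textendash{}degenerate reduced singularity.
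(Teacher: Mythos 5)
Your proposal correctly identifies the overall strategy (mimic the Mar\'{i}n\textendash{}Mattei induction on the reduction tree), but it leaves the genuinely new content of the theorem unproved and gets one key mechanism wrong. The central assertion \textemdash{} that an initial component carries \emph{exactly two} dead branches and that one of them has the first divisor as its extremity \textemdash{} is dispatched by the claim that ``the only mechanism producing an initial component is the resolution of two branches sharing a common tangent,'' which is precisely what has to be proved. The paper's proof rests on a specific lemma you do not have: if the reduction of a singularity $p$ lying on a divisor component $D$ produced a dead branch attached to $D$ at $p$, then $D$ would be the only separatrix through $p$ (Lemme~\ref{lem:BM_pas_sep}), and Lemme~\ref{lem:sing_pas_morte} then shows by induction (using the Camacho\textendash{}Sad formula and the fact that the holonomy of a link carrying a single singularity is trivial) that such a reduction must terminate in a saddle\textendash{}node whose strong separatrix is a divisor component \textemdash{} exactly what strong presentability forbids. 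This Corollaire~\ref{cor:pas_attach_BM} is the engine of the whole argument: it forces every dead branch to be seeded by a genuine separatrix (or an adjacent pre\textendash{}existing component) at the \emph{first} step of its formation, and the subsequent chain construction (Lemme~\ref{lem:naissance_branche} and the three\textendash{}case analysis for the second branch) pins down the count of two branches, the identification $n_{1}=1$, and hence uniqueness of the initial component. None of this combinatorial work is replaced by anything in your case analysis on $D_{1}$, which remains at the level of assertion.

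Two further points. First, your argument for linearizability of the corner saddles \textemdash{} ``a non\textendash{}linearizable resonant saddle would require an extra blowup, contradicting minimality'' \textemdash{} is incorrect: resonant saddles are reduced and are never blown up further regardless of linearizability, so minimality gives nothing. The paper's argument is that each link of a dead branch minus its one or two singular points is simply connected, so the holonomy along it is the identity; a resonant saddle with trivial holonomy and the negative rational Camacho\textendash{}Sad index $-\nicefrac{j}{(j+1)}$ computed along the chain is necessarily linearizable. Second, your heredity step should be handled with care: the paper explicitly warns (Remarque~\ref{rem_eclate_presentable}) that strong presentability is \emph{not} invariant under point blowup, which is why the proof works with the fixed minimal reduction $E$ and with lemmas stated at intermediate stages of that reduction (Lemme~\ref{lem:croisement_reduit}), rather than with a naive induction passing to the germs at non\textendash{}reduced points.
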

On verra en Section~\ref{sec:Exemples} comment construire des feuilletages
comportant un nombre arbitraire de composantes initiales, chacune
pouvant elle\textendash{}même être attachée à un nombre arbitraire
de branches mortes.

\subsection{Préliminaires }

\subsubsection{\label{sub:Camacho-Sad}Formule de Camacho\textendash{}Sad}

La notion d'indice de Camacho\textendash{}Sad, ainsi que les formules
afférentes, sont introduites dans le fameux article~\citep{CamaSad}.
Sans entrer dans les détails de la construction d'origine, on se servira
du fait suivant comme définition.
\begin{lem}
Soit $\fol{}$ un germe de singularité réduite en un point $p\in\ww C^{2}$,
et $S$ un germe de séparatrice locale, lisse en ce point. Quitte
à changer le système de coordonnées conformes au voisinage de la singularité,
on peut toujours supposer que $p=\left(0,0\right)$ et que le feuilletage
$\fol{}$ est induit par un germe de $1$\textendash{}forme différentielle
\begin{eqnarray*}
\left(\lambda_{1}x+\cdots\right)\dd y-\lambda_{2}y\left(1+\cdots\right)\dd x & \,\,\,\,\,,\,\lambda_{2}\neq0 & \,.
\end{eqnarray*}
La valeur de l'indice $\csad$ de Camacho\textendash{}Sad de $\fol{}$
relativement à $S$ au point $p$ est donnée par les égalités suivantes.
\begin{enumerate}
\item Si $S\subset\left\{ x=0\right\} $ on a 
\begin{eqnarray*}
\csad & = & \frac{\lambda_{1}}{\lambda_{2}}\,.
\end{eqnarray*}

\item De plus, si $\fol{}$ est un nœud\textendash{}col convergent d'invariant
formel $\mu\in\ww C$ et si $S\subset\left\{ y=0\right\} $ est la
séparatrice faible, on a
\begin{eqnarray*}
\csad & = & \mu\,.
\end{eqnarray*}

\end{enumerate}
\end{lem}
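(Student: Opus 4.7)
The plan is to use the standard residue definition of the Camacho--Sad index. Given a smooth separatrix $S$ through $p$, one chooses local coordinates $(u,v)$ so that $S=\{v=0\}$. Invariance of $S$ under $\fol{}$ forces any defining $1$-form to take the shape $\omega=v\,\tilde{a}(u,v)\dd u+b(u,v)\dd v$, since the coefficient of $\dd u$ must vanish on $S$. The index is then defined by
\begin{eqnarray*}
\csad & := & -\tx{Res}_{u=0}\frac{\tilde{a}(u,0)}{b(u,0)}\dd u\,.
\end{eqnarray*}
Both asserted equalities then reduce to reading off a suitable coefficient at the origin.

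For case~(1), $S=\{x=0\}$ is already the desired separatrix, so I swap the coordinate roles by setting $u:=y$ and $v:=x$. Invariance of $\{x=0\}$ under $\fol{}$ forces the coefficient $P:=\lambda_{1}x+\cdots$ of $\dd y$ to factor as $x\,\tilde{P}(x,y)$ with $\tilde{P}(0,0)=\lambda_{1}$, while the coefficient of $\dd x$ is $-\lambda_{2}y(1+\cdots)$. In the new coordinates the form becomes $\omega=v\,\tilde{P}(v,u)\dd u-\lambda_{2}u(1+\cdots)\dd v$, and the residue formula yields
\begin{eqnarray*}
\csad & = & -\tx{Res}_{u=0}\frac{\tilde{P}(0,u)}{-\lambda_{2}u(1+\cdots)}\dd u\,=\,\frac{\tilde{P}(0,0)}{\lambda_{2}}\,=\,\frac{\lambda_{1}}{\lambda_{2}}\,,
\end{eqnarray*}
the integrand having a simple pole with the indicated residue since $\frac{1}{1+\cdots}$ equals $1$ at the origin.

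For case~(2), I first apply the analytic change of coordinates $(x,y)\mapsto(x,y-\hat{s}(x))$ from Section~\ref{sec:Notations} to straighten the (convergent) weak separatrix $\{y=\hat{s}(x)\}$ onto $\{y=0\}$. Starting from the Dulac form $\omega_{R}=x^{k+1}\dd y-(y+R)\dd x$ with $R=\mu x^{k}y+x^{k+1}h(x,y)$, a direct substitution shows that the transformed $1$-form reads $x^{k+1}\dd y-y(1+T(x,y))\dd x$ with
\begin{eqnarray*}
T(x,0) & = & \partial_{y}R(x,\hat{s}(x))\,,
\end{eqnarray*}
the identity being obtained by differentiating the relation defining the new coefficient and invoking the invariance ODE $x^{k+1}\hat{s}'(x)=\hat{s}(x)+R(x,\hat{s}(x))$ satisfied by $\hat{s}$. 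Since $\partial_{y}R(x,y)=\mu x^{k}+x^{k+1}\partial_{y}h(x,y)$ and $\hat{s}(0)=0$, we get $T(x,0)=\mu x^{k}+x^{k+1}\bigl(\partial_{y}h\bigr)(x,\hat{s}(x))$, and the residue formula gives
\begin{eqnarray*}
\csad & = & \tx{Res}_{x=0}\frac{1+T(x,0)}{x^{k+1}}\dd x \,=\, \mu\,,
\end{eqnarray*}
because only the $\mu x^{k}$ contribution to $T(x,0)$ produces a $1/x$ term.

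The one genuinely delicate step is case~(2): one has to check that the nonlinear substitution by $\hat{s}$ preserves the Dulac shape (so that the new $R$ factors as $y\cdot T$, which follows precisely from the invariance ODE, hence is automatic) and that the leading $\mu x^{k}$ coefficient of $\partial_{y}R$ is not destroyed by composition with $\hat{s}$. Once this bookkeeping is in place, the remaining work is a routine identification of coefficients at simple or multiple poles, so no further difficulty appears.
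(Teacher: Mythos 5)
Your derivation is correct, but note that the paper does not prove this lemma at all: it explicitly states that, rather than recalling the original construction of the index, it will \emph{use the two equalities as the definition} of $\csad$, deferring to the Camacho--Sad article for the underlying theory. So there is no paper proof to match; what you have done is supply the missing verification from the standard residue definition $\csad=-\tx{Res}_{u=0}\,\tilde{a}(u,0)/b(u,0)\dd u$ for $\omega=v\tilde{a}\dd u+b\dd v$, $S=\left\{ v=0\right\} $. Both computations check out: in case~(1) the swap $u:=y$, $v:=x$ and the factorization $P=x\tilde{P}$ forced by invariance give a simple pole with residue $\lambda_{1}/\lambda_{2}$; in case~(2) the key identity $T\left(x,0\right)=\partial_{y}R\left(x,\hat{s}\left(x\right)\right)$, which you correctly extract from the invariance ODE $x^{k+1}\hat{s}'=\hat{s}+R\left(x,\hat{s}\right)$ via the integral form of the first-order Taylor remainder, shows that only the $\mu x^{k}$ term of $\partial_{y}R$ contributes a $1/x$ pole, whence $\csad=\mu$ (convergence of $\hat{s}$ is exactly what makes the straightening an analytic coordinate change, consistent with the hypothesis). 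One can further sanity-check your sign convention against the Camacho--Sad formula as stated in the paper: blowing up a linear singularity with eigenvalues $\lambda_{1},\lambda_{2}$ and summing the two corner indices $\lambda_{1}/\left(\lambda_{2}-\lambda_{1}\right)+\lambda_{2}/\left(\lambda_{1}-\lambda_{2}\right)=-1$ recovers the Chern class of the exceptional divisor, so your normalization is the right one. In short: a correct, self-contained proof of a statement the paper leaves as a citation-backed definition.
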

\begin{thm}
\emph{(Formule de Camacho\textendash{}Sad)} \emph{\citep{CamaSad}}
Soit $\mathcal{C}$ une courbe complexe lisse et compacte plongée
dans une surface complexe $\mathcal{M}$. On note $\frak{c}\left(\mathcal{C},\mathcal{M}\right)$
la classe de Chern de ce plongement. Soit $\fol{}$ un feuilletage
holomorphe sur $\mathcal{M}$ laissant $\mathcal{C}$ invariante,
et dénotons ${\tt Sing}\left(\fol{},\mathcal{C}\right)$ l'ensemble
des points singuliers de $\fol{}$ sur $\mathcal{C}$. Alors
\begin{eqnarray*}
\frak{c}\left(\mathcal{C},\mathcal{M}\right) & = & \sum_{p\in{\tt Sing}\left(\fol{},\mathcal{C}\right)}\csad[\fol{}][\mathcal{C}][p]\,.
\end{eqnarray*}

\end{thm}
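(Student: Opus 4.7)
Le plan est d'exploiter la formule de Camacho\textendash{}Sad conjointement avec la contrainte apport�e par la forte pr�sentabilit�, afin de contraindre s�v�rement la combinatoire des branches mortes et des composantes initiales. Premi�re observation essentielle~: en un coin, les deux composantes du diviseur qui s'y croisent sont toutes deux invariantes par $\hat{\fol{}}$ et tangentes aux directions propres de la partie lin�aire~; un n�ud\textendash{}col en ce point aurait n�cessairement sa s�paratrice forte port�e par l'un de ces diviseurs, ce qui contredit la d�finition de feuilletage fortement pr�sentable. Par cons�quent les singularit�s de coin d'une branche morte sont toutes des selles r�duites ($\lambda\neq0$), avec indice de Camacho\textendash{}Sad non nul satisfaisant la relation d'inversion $\csad[][D'][p]=\nf 1{\csad[][D][p]}$ entre les deux composantes $D,D'$ se croisant en $p$.

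Soit $B$ une branche morte de maillons $D_{1},\ldots,D_{n}$, o� $D_{1}$ est la composante d'attache, $D_{n}$ l'extr�mit�, $s_{j}$ le coin entre $D_{j}$ et $D_{j+1}$, et $-a_{j}$ l'auto\textendash{}intersection de $D_{j}$. Comme les $D_{j}$ ne portent aucune singularit� hormis leurs coins, la formule de Camacho\textendash{}Sad et la relation d'inversion donnent par r�currence descendante $\csad[][D_{n}][s_{n-1}]=-a_{n}$, puis
\begin{eqnarray*}
\csad[][D_{j}][s_{j-1}] & = & -a_{j}-\frac{1}{\csad[][D_{j}][s_{j}]}\,,
\end{eqnarray*}
d'o� l'on reconna�t une fraction continue de Hirzebruch\textendash{}Jung $[a_{1},a_{2},\ldots,a_{n}]$. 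L'indice sortant au point d'attache est donc un rationnel strictement n�gatif, et les indices aux coins interm�diaires sont eux aussi rationnels~; les selles de coin sont ainsi des selles rationnelles. Leur lin�arisabilit� s'obtient par un argument d'holonomie projective le long de la cha�ne~: l'holonomie totale autour de chaque $D_{j}$, produit des holonomies aux deux coins, est � invariants rationnels, et l'obstruction formelle � la lin�arisation (un r�sidu) se propage depuis l'extr�mit� $D_{n}$ o� elle est triviale par simplicit� topologique, for�ant par cons�quent la nullit� du r�sidu en chacun des coins.

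Pour l'unicit� de la composante initiale, supposons qu'elle existe et soit not�e $C$. Par d�finition $C$ ne porte qu'une singularit� suppl�mentaire $q$, et la formule de Camacho\textendash{}Sad y donne $-a_{C}=\csad[][C][q]+\sum_{i}\Lambda_{i}$ avec $\Lambda_{i}<0$ les indices sortants des branches mortes attach�es � $C$. Si une seconde composante initiale $C'$ existait, la topologie en arbre du diviseur forcerait l'existence d'une cha�ne de composantes reliant $C$ � $C'$ port�e uniquement par des coins, toute autre branche issue de $C$ ou $C'$ �tant n�cessairement une branche morte par d�finition. L'analyse par fractions continues de cette cha�ne, coupl�e aux �galit�s de Camacho\textendash{}Sad en $C$ et $C'$ et aux contraintes enti�res $a_{j}\geq2$ impos�es par la minimalit� de la r�duction, produit alors un syst�me num�rique incompatible. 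Un comptage analogue montre qu'exactement deux branches mortes s'attachent � $C$ (trois branches ou plus �puiseraient le membre de droite de la formule de Camacho\textendash{}Sad sur $C$ sans laisser de place � $\csad[][C][q]$ compatible avec un raccord � la suite de l'arbre). Enfin, pour identifier l'extr�mit� de l'une des deux branches au premier diviseur $E_{1}$ cr�� par $E$, on suit $E_{1}$ au long du processus d'�clatement~: il a initialement auto\textendash{}intersection $-1$, et les �clatements ult�rieurs n�cessaires pour r�duire $\fol{}$ ne peuvent concentrer leurs centres que sur $E_{1}$ sans quoi $E_{1}$ porterait plusieurs singularit�s distinctes; la structure r�sultante fait de $E_{1}$ l'extr�mit� d'une cha�ne aboutissant pr�cis�ment � $C$.

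L'obstacle principal est la v�rification d�taill�e de l'incompatibilit� num�rique excluant l'existence de deux composantes initiales, et la propagation rigoureuse de la lin�arisabilit� le long des cha�nes~: ces deux points reposent sur un contr�le d�licat des fractions continues de Hirzebruch\textendash{}Jung sous contrainte enti�re $a_{j}\geq2$, ainsi que sur la structure multiplicative de l'holonomie projective, que l'on d�ploie par r�currence sur la longueur $n$ de la cha�ne.
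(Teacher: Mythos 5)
Votre texte ne d\'emontre pas l'\'enonc\'e demand\'e. L'\'enonc\'e \`a prouver est la formule d'indice de Camacho\textendash{}Sad elle\textendash{}m\^eme~: pour une courbe compacte lisse $\mathcal{C}$ invariante par un feuilletage holomorphe d'une surface $\mathcal{M}$, la classe de Chern du fibr\'e normal \'egale la somme des indices $\tx{CS}$ aux points singuliers. Or votre argument \emph{utilise} cette formule d\`es la deuxi\`eme phrase (r\'ecurrence descendante le long d'une branche morte, fractions continues de Hirzebruch\textendash{}Jung, comptage sur la composante initiale) pour \'etablir un tout autre r\'esultat, \`a savoir la structure des branches mortes et l'unicit\'e de la composante initiale d'un feuilletage fortement pr\'esentable \textendash{} c'est le contenu du Th\'eor\`eme~\ref{thm:composante_init} et du Lemme~\ref{lem:sing_pas_morte} de l'article, pas celui de la formule de Camacho\textendash{}Sad. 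Pris comme preuve de l'\'enonc\'e cibl\'e, votre raisonnement est donc circulaire~: il suppose exactement ce qu'il faudrait d\'emontrer.

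Notez que l'article lui\textendash{}m\^eme ne red\'emontre pas cette formule~: elle est cit\'ee comme r\'esultat classique de Camacho et Sad. Une preuve v\'eritable passerait par un argument de nature globale sur $\mathcal{C}$~: on \'ecrit localement le feuilletage au voisinage de chaque point de $\mathcal{C}$ (courbe redress\'ee sur $\left\{ y=0\right\} $), on en extrait une connexion m\'eromorphe sur le fibr\'e normal de $\mathcal{C}$ dont les p\^oles sont les points singuliers et dont les r\'esidus sont pr\'ecis\'ement les indices $\tx{CS}$, puis on conclut par le th\'eor\`eme des r\'esidus (ou par Chern\textendash{}Weil) que la somme des r\'esidus calcule le degr\'e du fibr\'e normal, c'est\textendash{}\`a\textendash{}dire l'auto\textendash{}intersection de $\mathcal{C}$. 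Aucun de ces ingr\'edients (recollement des donn\'ees locales en un objet global sur $\mathcal{C}$, identification des r\'esidus avec les indices, calcul du degr\'e) n'appara\^it dans votre proposition.
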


\subsubsection{Autres ingrédients}

Rappelons le résultat suivant, qui sera utile dans cette section~:
\begin{lem}
\label{lem:CamaLinsNetoSad}\emph{\citep[Lemma 1, p20]{CaLiNetSad}}
Soit $s$ une singularité de germe de feuilletage holomorphe du plan
complexe, telle que:
\begin{enumerate}
\item sa réduction ne contient pas de nœud\textendash{}col,
\item exactement deux séparatrices s'y croisent, et elles sont transverses,
irréductibles et lisses. 
\end{enumerate}
Alors $s$ est réduite.
\end{lem}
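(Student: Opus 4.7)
\emph{Proof proposal.} I would argue by contradiction. Suppose $s$ is not reduced and let $E\colon\mathcal{M}\to(\ww C^{2},s)$ denote its minimal reduction morphism. The plan is to use hypothesis~(2) to tightly constrain the combinatorics of $E^{-1}(s)$, then invoke the Camacho--Sad formula together with hypothesis~(1) to derive a contradiction.

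First I would observe that no component of $E^{-1}(s)$ can be dicritical: a dicritical divisor pushes forward by $E$ a one-parameter family of separatrices of $\fol{}$ through $s$, contradicting the assumption that exactly two separatrices cross at $s$. Hence every exceptional component is invariant by $\hat{\fol{}}:=E^{*}\fol{}$. Since $S_{1},S_{2}$ are smooth and transverse at $s$, their strict transforms $\tilde{S}_{1},\tilde{S}_{2}$ are disjoint smooth curves, each meeting $E^{-1}(s)$ transversally at a single non-corner singularity $q_{1},q_{2}$ of $\hat{\fol{}}$. Any further non-corner singularity would produce, via $E$, a third separatrix of $\fol{}$ at $s$, so by hypothesis~(2) these are the only non-corner singularities; by hypothesis~(1), both are reduced non-saddle-nodes, so the Camacho--Sad index at $q_{j}$ relative to the adjacent divisor lies in $\ww C\setminus\ww Q_{\geq 0}$.

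Next, minimality of $E$ combined with the fact that every terminal vertex of the dual tree of $E^{-1}(s)$ must carry at least one non-corner singularity (otherwise it would be a $(-1)$-curve with a single singular point, hence contractible) forces the dual tree to have at most two leaves, and therefore to be a chain $D_{1}-D_{2}-\cdots-D_{k}$ of length $k\geq 1$, with $q_{1}$ attached to $D_{1}$, $q_{2}$ to $D_{k}$, and every other singularity a corner $p_{i}:=D_{i}\cap D_{i+1}$.

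Finally, applying the Camacho--Sad formula on each $D_{i}$ of self-intersection $-n_{i}$, together with the reciprocity of the corner index between adjacent components, yields the continued-fraction system
\begin{eqnarray*}
\lambda_{1}+c_{1} & = & -n_{1},\\
\frac{1}{c_{i-1}}+c_{i} & = & -n_{i}\qquad(1<i<k),\\
\frac{1}{c_{k-1}}+\lambda_{2} & = & -n_{k},
\end{eqnarray*}
where $c_{i}$ is the CS index at $p_{i}$ relative to $D_{i}$, $\lambda_{j}$ the CS index at $q_{j}$, and $n_{i}\in\ww Z_{>0}$. Under hypothesis~(1) all $c_{i},\lambda_{j}$ lie in $\ww C\setminus\ww Q_{\geq 0}$. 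The main obstacle is to establish that this system is inconsistent. The natural approach is a Euclidean descent, tracking how the rational part of the CS indices propagates through the chain and eventually forces one of the $c_{i}$ or $\lambda_{j}$ into $\ww Q_{>0}$. Equivalently, one performs the blow-up computation directly: if the eigenvalue ratio at $s$ is $p/q\in\ww Q_{>0}$ in lowest terms, successive blow-ups of the non-reduced singularities run the Euclidean algorithm on $(p,q)$, necessarily reaching $(1,1)$ at some stage --- but the corresponding blow-up is then dicritical, contradicting the very first observation. The degenerate (nilpotent) case is handled by an analogous direct reduction analysis.
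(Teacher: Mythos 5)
The paper offers no proof of this lemma to compare against: it is quoted as a known result of Camacho--Lins~Neto--Sad \citep[Lemma 1, p20]{CaLiNetSad} and used as a black box. Judged on its own, your strategy (constrain the combinatorics of the minimal reduction via hypothesis~(2), then contradict via Camacho--Sad and hypothesis~(1)) is the standard one, but two steps fail as written.

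First, the claim that a terminal component of the dual tree carrying only a corner singularity ``would be a $(-1)$-curve, hence contractible'' is false: the extremity of a dead branch generically has self-intersection $\leq -2$ (compare the paper's own dead-branch analysis, where the extremity $D^{1}$ has Chern class $-2$, or the reduction of $\dd{\left(y^{2}-x^{3}\right)}=0$, which contains no saddle--node yet has two terminal components each carrying only a corner). Excluding dead branches --- which is exactly what you need to force the tree to be a chain with $q_{1},q_{2}$ at its two ends --- requires a real argument: trivial holonomy along the extremity plus Camacho--Sad gives a negative integer index at its corner, Mattei--Moussu then linearizes that resonant saddle, and one must propagate down the branch, essentially as in the proof of Lemma~\ref{lem:sing_pas_morte} of this paper. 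Second, the continued-fraction system you write down is \emph{not} inconsistent: for $k=1$ it reads $\lambda_{1}+\lambda_{2}=-1$, which is satisfied by $\lambda_{1}=\lambda_{2}=-\frac{1}{2}\in\ww C\setminus\ww Q_{\geq0}$, all constraints met. So no contradiction can come from the index relations alone; it must come from minimality of $E$, by showing that the last-created component is a $(-1)$-curve whose contraction still leaves a reduced, non-saddle--node point, and descending. Your fallback --- running the Euclidean algorithm on the eigenvalue ratio of $s$ --- presupposes a non-degenerate linear part, whereas the non-reduced singularities to be excluded are precisely those with nilpotent or vanishing linear part; deferring that case to ``an analogous direct reduction analysis'' defers the whole difficulty.
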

Ce lemme va nous permettre de contrôler de manière très précise le
processus de formation des branches mortes dans la réduction d'un
feuilletage fortement présentable. Un autre ingrédient, élémentaire,
pour arriver à cette fin est le~:
\begin{lem}
\label{lem:BM_pas_sep}Soit $p$ une singularité non réduite présente
sur une composante $D$  à une étape intermédiaire de la réduction
d'un germe de feuilletage. Si la réduction de $p$ produit une branche
morte attachée à $D$ alors la seule séparatrice passant par $p$
est le diviseur $D$.
\begin{proof}
Soit $S$ un germe de séparatrice passant par $p$. Après réduction
de $s$ le transformé strict de $S$ ne peut que passer par le point
d'attache. Donc $S\subset D$.
\end{proof}
\end{lem}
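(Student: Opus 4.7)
The plan is to argue by contraposition: starting from an arbitrary germ of separatrix $S$ of $\fol{}$ at $p$, I will show that its strict transform through the reduction of $p$ must coincide with the strict transform of $D$.

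First I would invoke the combinatorial structure of the dead branch $B$ produced by the reduction: by the very definition of ``branche morte'', after this reduction $B$ is a chain of $\ww P_{1}\left(\ww C\right)$'s whose only singularities of $\hat{\fol{}}$ are the interior corners between consecutive links of $B$, together with a single attachment point $q$ inherited from $p$ and sitting at the crossing between the attachment link and $D$. Since $S$ is not a blown\textendash{}up divisor, its strict transform $\tilde{S}$ is a non\textendash{}exceptional invariant analytic curve, and it must meet the new exceptional divisor at a point which is necessarily a singular point of $\hat{\fol{}}$ (otherwise we would produce a second separatrix at a regular point of $\hat{\fol{}}$, which is impossible). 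The only candidate landing points are therefore the interior corners of $B$ and the attachment point~$q$.

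Next I would rule out interior corners: each such corner is a reduced singularity of $\hat{\fol{}}$ sitting at the transverse crossing of two adjacent links of $B$, both of which are invariant (since all components of $B$ arise from the reduction of $p$ on the non\textendash{}dicritic component $D$, so the whole chain is non\textendash{}dicritic). The two local separatrices at such a corner are therefore exactly these two divisors, so $\tilde{S}$, being non\textendash{}exceptional, cannot provide a third separatrix. Hence $\tilde{S}$ must pass through $q$, whose two local separatrices are $D$ and the attachment link; since $\tilde{S}$ is not the attachment link it coincides with the strict transform of $D$, which reads back as $S\subset D$ at the stage where $p$ lives.

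I expect the main obstacle to be the rigidity statement asserting that every link of $B$ is non\textendash{}dicritic and that every interior corner of $B$ carries exactly the two crossing divisors as its separatrices. This has to be extracted carefully from the definition of a dead branch (``aucune autre singularit� de $\hat{\fol{}}$'' on the middle links and on the extremity), which forbids dicritic links or hidden separatrices at smooth points of the chain; any such pathology would place an extra singularity on an interior link, contradicting the defining combinatorial pattern. Once this structural input is granted, the remainder is standard bookkeeping on strict transforms under point blowups.
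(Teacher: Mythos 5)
Your proof is correct and follows essentially the same route as the paper's (much terser) two\textendash{}line argument: the strict transform of any separatrix through $p$ can only land at the attachment point of the dead branch, and the two local separatrices there are the attachment link and the strict transform of $D$, forcing $S\subset D$. The structural points you flag explicitly (no dicritical links, no extra separatrices at interior corners or at smooth points of the chain) are exactly what the paper leaves implicit in the definition of a \emph{branche morte}, so your write\textendash{}up is a faithful expansion rather than a different proof.
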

Le but de ce paragraphe est de montrer le résultat suivant~:
\begin{cor}
\label{cor:pas_attach_BM}Soit $D$ une composante à une étape de
la réduction d'un feuilletage fortement présentable. Alors la réduction
d'une singularité $p\in D$ ne peut créer de branche morte attachée
à $D$ en $p$.
\end{cor}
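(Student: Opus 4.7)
The proof proceeds by contradiction and strong induction on the number of blow-ups needed to reduce $p$. Suppose that reducing $p\in D$ creates a dead branch $B=D_{1}\cdots D_{n}$ attached to $D$ at the point $p_{0}=D\cap D_{1}$. By Lemma~\ref{lem:BM_pas_sep}, the only local separatrix of $\fol{}$ at $p$ is $D$ itself. Examining the first blow-up at $p$, I obtain an exceptional divisor $E_{1}$ which must be invariant, since a dicritical $E_{1}$ would give $p$ infinitely many local separatrices. Similarly, at every singularity $q$ of the once-blown-up foliation lying on $E_{1}\setminus\{p_{1}\}$ (where $p_{1}:=D'\cap E_{1}$) the only local separatrix is $E_{1}$, as any transverse local separatrix would project through the blow-up to a separatrix of $\fol{}$ at $p$ distinct from $D$.

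I then distinguish two cases according to whether $D_{1}=E_{1}$ or $D_{1}\neq E_{1}$. If $D_{1}=E_{1}$, then the definition of a dead branch forces $E_{1}$ to carry exactly two singularities of the fully reduced foliation, namely the attaching point $p_{0}=p_{1}$ and the internal corner $c_{1}=D_{1}\cap D_{2}$. The pre-image of $c_{1}$ immediately after the first blow-up is thus a singularity $q\in E_{1}\setminus\{p_{1}\}$ whose only local separatrix is $E_{1}$. Strong presentability rules out $q$ being a reduced saddle-node (its strong separatrix would then be the divisor $E_{1}$), so $q$ must be non-reduced. But then the sub-tree obtained by reducing $q$ is exactly $D_{2}\cdots D_{n}$ and forms a dead branch attached to $E_{1}$ at $q$. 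Since reducing $q$ uses strictly fewer blow-ups than reducing $p$, the inductive hypothesis applied to $q$ on $E_{1}$ yields a contradiction.

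If instead $D_{1}\neq E_{1}$, then the corner $p_{1}$ itself has been blown up and hence $p_{1}$ is non-reduced. At $p_{1}$ the separatrices $D'$ and $E_{1}$ are transverse, smooth and irreducible, so Lemma~\ref{lem:CamaLinsNetoSad} (in contrapositive form) forces the reduction of $p_{1}$ to contain a saddle-node. Strong presentability of $\fol{}$ --- a property inherited by every local singularity appearing during reduction, since the minimal reduction of such a local singularity is a sub-tree of the minimal reduction of $\fol{}$ --- then implies that the strong separatrix of this saddle-node is not a component of the exceptional divisor. Projecting it back to $p_{1}$ yields a local separatrix of the once-blown-up foliation different from both $D'$ and $E_{1}$, and projecting further through the first blow-up produces a local separatrix of $\fol{}$ at $p$ distinct from $D$, again contradicting Lemma~\ref{lem:BM_pas_sep}.

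The main technical point requiring care is in the first case: verifying cleanly that the sub-chain $D_{2}\cdots D_{n}$ really satisfies the corner-and-singularity conditions of Definition~\ref{def_compo_initiale} relative to $E_{1}$ and $q$. This reduces to observing that removing the attaching component $D_{1}=E_{1}$ from $B$ simply promotes $D_{2}$ to the role of attaching component (its attaching point becoming $q$ on $E_{1}$), while preserving the singularity data on the remaining components $D_{i}$.
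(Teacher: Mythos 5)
Your overall strategy (strong induction on the number of blow\textendash{}ups, peeling off the first exceptional divisor $E_{1}$ and splitting on whether the corner $D'\cap E_{1}$ is reduced) is reasonable and close in spirit to what the paper actually does, namely combine Lemma~\ref{lem:BM_pas_sep} with Lemma~\ref{lem:sing_pas_morte} applied to $S:=D$; your Case~2 is sound and essentially reproduces Lemma~\ref{lem:croisement_reduit}. The genuine problem is that your induction has no valid terminal case. In the situation of the corollary the singularities carried by the branch $B=D_{1}\cdots D_{n}$ are exactly the corners $D\cap D_{1}$ and $D_{j}\cap D_{j+1}$, so your Case~1 recursion never meets a \emph{reduced} $q$: at each level $q_{j}$ is the preimage of the corner $D_{j}\cap D_{j+1}$ and is automatically non\textendash{}reduced because $D_{j+1}$ exists. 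The recursion therefore bottoms out at the extremity, that is at a singularity $q_{n-1}\in D_{n-1}$ whose reduction is the single component $D_{n}$ carrying exactly \emph{one} singularity (the corner $D_{n}\cap D_{n-1}$). None of your cases covers this configuration: Case~2 does not apply (that corner is reduced), Case~1 asserts that the new divisor carries two singularities (false here, there is no $D_{n+1}$), and your escape clause \og $q$ reduced, hence a divergent saddle\textendash{}node \fg{} does not apply either, since $q_{n-1}$ is not reduced.

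Handling that terminal configuration is precisely where the index\textendash{}theoretic input enters, and it is absent from your proposal. One must observe that $D_{n}$ has self\textendash{}intersection $-1$ and trivial holonomy off its unique singular point, so by the Camacho\textendash{}Sad formula that singularity has index $-1$ relative to $D_{n}$ and identity holonomy; the classification of such reduced germs leaves only the linear saddle $\tx d\left(xy\right)=0$ (which blows down to a regular point, impossible since $q_{n-1}$ was blown up in a minimal reduction) or a saddle\textendash{}node whose weak separatrix is $D_{n}$, hence whose strong separatrix is the adjacent divisor component --- the desired contradiction with strong presentability. This is exactly the case $d\leq1$ in the paper's proof of Lemma~\ref{lem:sing_pas_morte}, and it cannot be bypassed: without it your induction unwinds the whole branch and then stalls. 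A secondary, related issue is that for $n=2$ the sub\textendash{}tree $D_{2}\cdots D_{n}$ over $q$ is a single component and hence not literally a dead branch in the sense of Definition~\ref{def_compo_initiale} (which requires both an extremity and an attaching component), so the inductive hypothesis as you state it is not even applicable at the last step.
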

Cette propriété se déduit directement du Lemme~\ref{lem:BM_pas_sep}
et du suivant (avec $S:=D$)~:
\begin{lem}
\label{lem:sing_pas_morte}Soit $s$ une singularité de germe de feuilletage
fortement présentable du plan complexe par laquelle ne passe qu'une
seule séparatrice $S$, lisse et irréductible.
\begin{enumerate}
\item Si le feuilletage est réduit, $s$ est une singularité de type nœud\textendash{}col
divergent dont $S$ est la séparatrice forte.
\item Si le feuilletage n'est pas réduit et si son arbre de réduction est
une branche morte attachée au transformé strict $\hat{S}$ de $S$,
alors $\hat{S}$ est la séparatrice forte d'un nœud\textendash{}col
présent dans le dernier diviseur créé, qui est le point d'attache
de la branche morte. Les autres singularités de $\hat{\fol{}}$ sont
des selles rationnelles linéarisables.
\end{enumerate}
\end{lem}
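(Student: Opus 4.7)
The plan is to handle the two cases separately. For Part~(1), since $\fol{}$ is reduced at $s$ I invoke the classification recalled in Section~\ref{sec:Notations}: a reduced non-degenerate singularity ($\lambda\neq0$) carries two separatrices, tangent to the two eigenspaces, so having only one forces $\lambda=0$, i.e.\ $s$ is a saddle-node. Among saddle-nodes the strong separatrix always exists; the weak one is present analytically iff Dulac's formal weak series converges. Hence $s$ is a divergent saddle-node and $S$ is its strong separatrix. Strong presentability is automatic per the footnote of the definition.

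For Part~(2), write $D_{1},\dots,D_{n}$ ($n\ge2$) for the chain of the dead branch, with $D_{1}$ extremity, $D_{n}$ attachment component, $c_{i}$ the corner between $D_{i}$ and $D_{i+1}$, and $p=\hat{S}\cap D_{n}$ the attachment point. At $c_{i}$ both separatrices are divisors, so strong presentability forbids $c_{i}$ from being a saddle-node; as $c_{i}$ is a singularity of the reduced foliation $\hat{\fol{}}$, it is a reduced non-saddle-node. I next argue that $p$ is a saddle-node with strong separatrix $\hat{S}$: by minimality of the reduction and the previous step, the blow-up order is forced to be $D_{1},D_{2},\dots,D_{n}$, so the last blow-up occurs at a non-reduced singularity $s_{n-1}=D_{n-1}\cap\hat{S}$ and produces the reduced singularities $c_{n-1}$ and $p$ on $D_{n}$. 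The point $s_{n-1}$ carries exactly two smooth, irreducible, transverse separatrices, namely $D_{n-1}$ and $\hat{S}$ (the only non-divisor separatrix of $\hat{\fol{}}$ under the one-separatrix hypothesis on $s$). If $p$ were not a saddle-node, the reduction of $s_{n-1}$ would contain no saddle-node at all, and Lemma~\ref{lem:CamaLinsNetoSad} would force $s_{n-1}$ reduced, contradicting minimality. Hence $p$ is a saddle-node, and strong presentability forces its strong separatrix to be $\hat{S}$, $D_{n}$ being the last divisor created as claimed.

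For the linearizability of the corners, I first compute their eigenvalue ratios via the Camacho\textendash{}Sad formula applied to each $D_{i}$, using the self-intersections $e_{1}=\dots=e_{n-1}=-2$, $e_{n}=-1$ dictated by the blow-up history and the reciprocity $a_{i+1}=1/b_{i}$ at reduced corners; this yields rational negative ratios of the form $-(k+1)/k$, so each $c_{i}$ is a rational saddle. Since $D_{1}\setminus\{c_{1}\}\simeq\ww C$ is simply connected, the holonomy of $\hat{\fol{}}$ along any loop in $D_{1}\setminus\{c_{1}\}$ is trivial; a resonant saddle with trivial holonomy on one side admits a holomorphic first integral by Mattei\textendash{}Moussu, which linearizes $c_{1}$. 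Propagating along the chain via $\pi_{1}(D_{i+1}\setminus\{c_{i},c_{i+1}\})\simeq\ww Z$, the holonomy at $c_{i+1}$ from the $D_{i+1}$ side is (the inverse of) a finite-order rotation, hence linearizable by Koenigs' theorem; Mattei\textendash{}Moussu then linearizes $c_{i+1}$.

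The most delicate part is justifying the forced blow-up order used in Step~B: one must verify inductively that at every intermediate stage the only non-reduced singularity lies on $\hat{S}$, which requires checking that each newly formed corner is reduced at the moment of its creation. This intertwines the inductive blow-up structure with strong presentability and Lemma~\ref{lem:CamaLinsNetoSad} in a delicate way; once established, the Camacho\textendash{}Sad computation and the propagation of holonomies along the chain become essentially mechanical.
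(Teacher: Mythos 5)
Your treatment of the final configuration is essentially the paper's: you exclude saddle\textendash{}nodes at the corners by strong presentability, detect the saddle\textendash{}node at the attachment point by applying Lemme~\ref{lem:CamaLinsNetoSad} to the last non\textendash{}reduced singularity, orient its strong separatrix along $\hat{S}$ by strong presentability (the paper instead computes the nonzero index $-1/(d+1)$ to see that the divisor is the weak separatrix), and obtain linearizability of the corner saddles from the trivial holonomy of the extremity propagated along the chain. Your Camacho\textendash{}Sad indices agree with the paper's $-j/(j+1)$. (Minor point: Koenigs' theorem does not apply to roots of unity; you want the fact that a periodic germ of $\tx{Diff}\left(\ww C,0\right)$ is linearizable, together with Mattei\textendash{}Moussu. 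The conclusion stands.)

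The genuine gap is precisely the step you defer at the end: the claim that \emph{the blow\textendash{}up order is forced to be} $D_{1},\dots,D_{n}$, equivalently that at every intermediate stage the unique non\textendash{}reduced singularity lies on the strict transform of $S$. Minimality of the reduction and the reducedness of the final corners do not give this. A priori the final chain is compatible with the first exceptional divisor sitting in the \emph{middle} of the chain and carrying, besides the point of the strict transform of $S$, a second singularity $\tilde{s}$ whose own reduction is a sub\textendash{}dead\textendash{}branch; nothing in your argument excludes this, and then your $s_{n-1}$ and the identification of $p$ collapse. The paper excludes it by an induction on the number $d$ of blow\textendash{}ups in which the induction hypothesis is the statement of the lemma itself, both parts combined: such a $\tilde{s}$ has the divisor as its only separatrix (Lemme~\ref{lem:BM_pas_sep}), so by part~(1) if it is reduced, or by part~(2) if it is not, it would produce a saddle\textendash{}node whose strong separatrix is a component of the exceptional divisor, contradicting strong presentability. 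Without setting up this induction, or an equivalent mechanism, the proof is incomplete, and this is where most of the work in the paper's argument actually lies.
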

\begin{figure}[H]
\includegraphics[height=5cm]{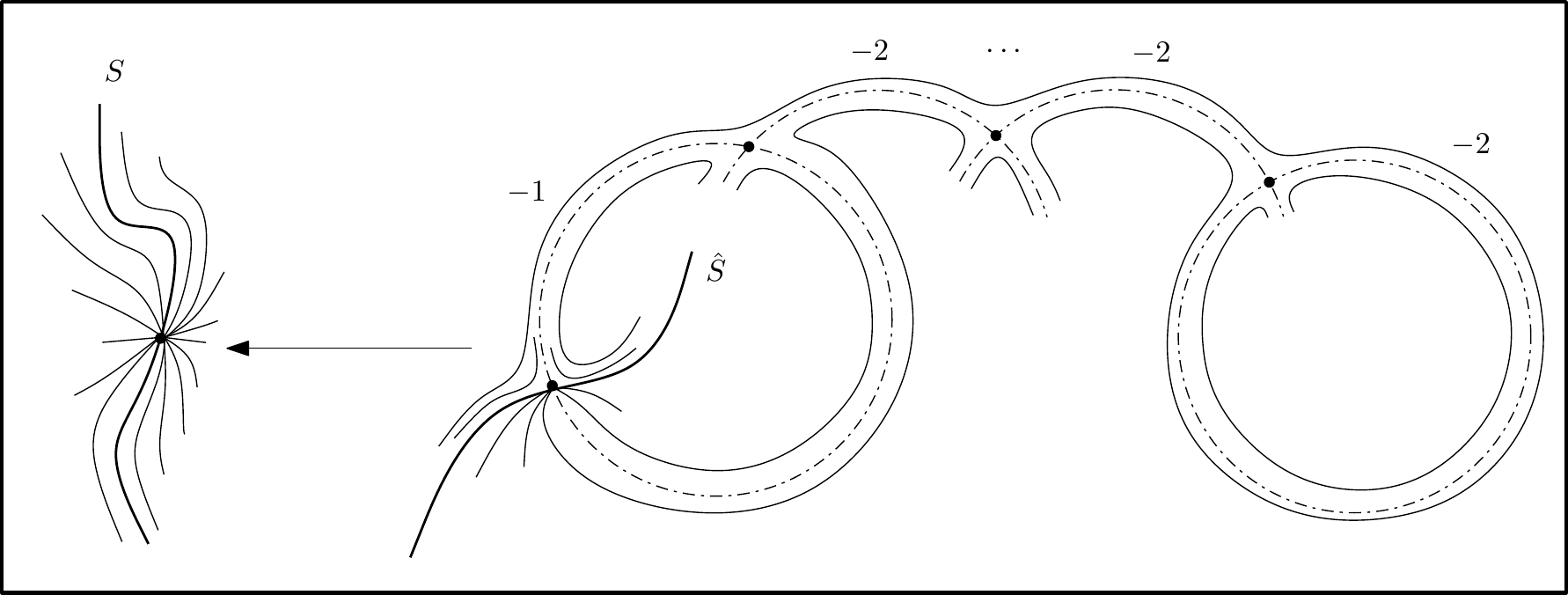}

\caption{Une branche morte obtenue en réduisant une seule séparatrice (en gras).
Les nombres indiquent la classe de Chern des composantes.}
\end{figure}

\begin{rem}
En Section~\ref{sub:branches_mortes} nous construisons des feuilletages
correspondant à la situation évoquée dans ce lemme. On comprend également
comment construire des feuilletages (pas fortement présentables) ayant
un nombre arbitraire de branches mortes.\end{rem}
\begin{proof}
~
\begin{enumerate}
\item Par une singularité réduite passent deux séparatrices transverses,
irréductibles et lisses, sauf si la singularité est un nœud\textendash{}col
divergent.
\item Nous montrons le lemme par récurrence sur le nombre $d\geq1$ d'éclatements
ponctuels de $s$ nécessaire à la réduire. Si $s$ est réduite en
un éclatement on obtient un diviseur $D^{1}$ ne portant qu'une seule
singularité $\hat{s}$ par laquelle passe $\hat{S}$. L'holonomie
de $\hat{\fol{}}$ le long de $D^{1}$ doit être l'identité et son
indice de Camacho\textendash{}Sad égal à $-1$ d'après la formule
de Camacho\textendash{}Sad. Les seuls types locaux de singularité
ayant cette propriété sont les suivants~:

\begin{figure}[H]
\hfill{}\subfloat[Les selles linéarisables localement conjuguées à $\dd{\left(xy\right)=0}$,
avec le diviseur correspondant à une branche de $\left\{ xy=0\right\} $~;
dans ce cas $\fol{}$ est régulier, ce qui est exclu.]{\includegraphics[height=6cm]{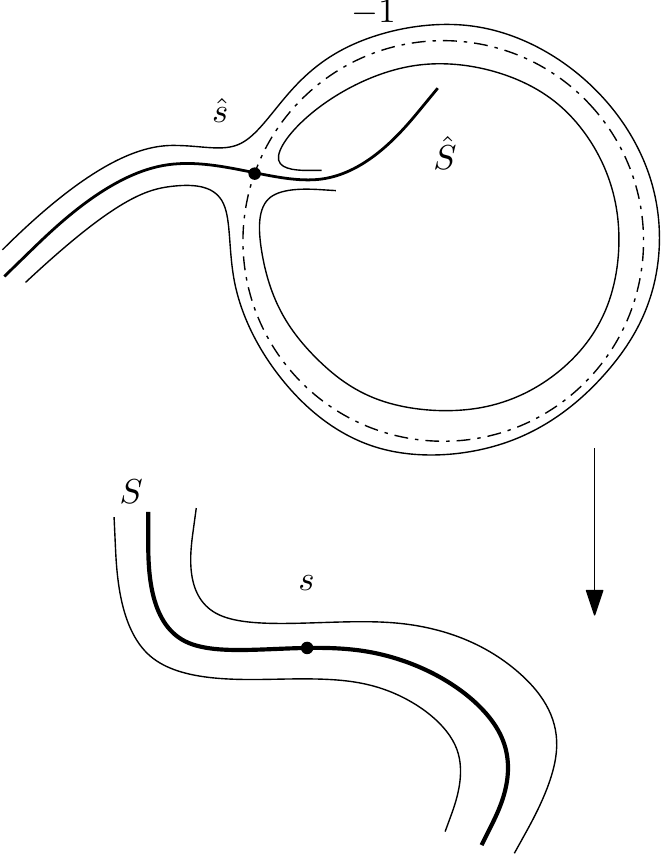}

}\hfill{}\subfloat[Les nœuds\textendash{}cols dont le diviseur est la séparatrice faible,
d'invariant formel $\mu:=-1$. Ainsi $\hat{S}$ est la séparatrice
forte de $\hat{s}$.]{\includegraphics[height=6cm]{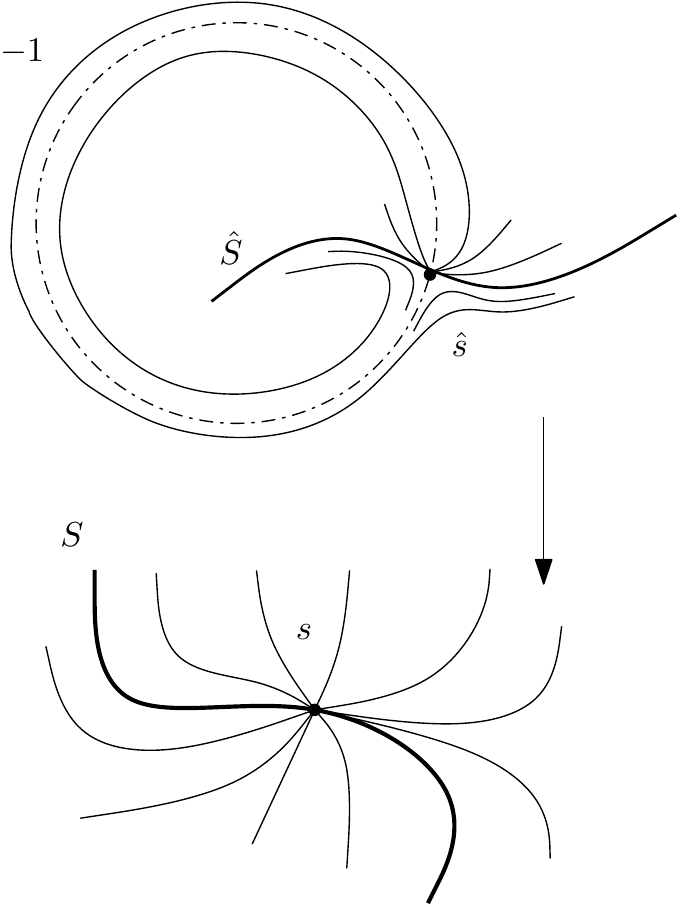}

}\hfill{}
\end{figure}

Le lemme est vrai pour $d\leq1$. Supposons qu'il le soit pour toute
singularité réduite en au plus $d\geq1$ éclatements, et prenons une
singularité $s$ réduite en (exactement) $d+1$ éclatements, dont
l'arbre de réduction est une branche morte attachée à $\hat{S}$.
Après le premier éclatement de $s$, le diviseur $D^{1}$ (puisque
c'est un maillon d'une branche morte) contient au plus deux singularités,
réduites en au plus $d$ éclatements. Le transformé strict $S_{1}$
de $S$ passe par une d'elles, disons $s_{1}$, en coupant transversalement
le diviseur $D^{1}$. Nécessairement la réduction de $s_{1}$ produit
le point d'attache, donc la réduction de l'autre singularité $\tilde{s}$
éventuelle est une branche morte. Comme la seule séparatrice de $\tilde{s}$
est $D^{1}$, l'hypothèse de récurrence appliquée à $\tilde{s}$ contredit
celle de forte présentabilité, donc $s_{1}$ est la seule singularité
présente sur $D^{1}$.

Éclatons $s_{1}$ une fois de plus~: on obtient un nouveau diviseur
$D^{2}$ attaché à $D^{1}$. Le même argument que précédemment indique
que les seules singularités présentes sur $D^{1}\cup D^{2}$ sont~:
\begin{itemize}
\item le coin $p_{1}:=D^{2}\cap D^{1}$,
\item le point d'intersection $s_{2}$ entre le transformé strict $S_{2}$
de $S_{1}$ et $D^{2}$.
\end{itemize}

Comme $p_{1}\neq s_{2}$ le coin est réduit~:
\begin{lem}
\label{lem:croisement_reduit}Soit $p$ un coin du diviseur exceptionnel
d'une étape intermédiaire de la réduction d'un feuilletage fortement
présentable, par laquelle ne passe aucune autre séparatrice que les
branches du diviseur exceptionnel. Alors la singularité $p$ est réduite.\end{lem}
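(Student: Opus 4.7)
The plan is to apply Lemma~\ref{lem:CamaLinsNetoSad} directly to the
singularity $p$, taking as the pair of separatrices the two branches of the
exceptional divisor meeting at $p$. Condition~(2) of the lemma is immediate:
the two divisor branches are smooth, irreducible, and by construction meet
transversally at $p$, while by hypothesis they are the only separatrices of
the foliation passing through this point. It remains to verify condition~(1),
namely that no saddle\textendash{}node appears in the further reduction of
$p$.

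I would argue this by contradiction, using strong presentability. Suppose
some additional blow\textendash{}up in the reduction of $p$, performed
within the minimal reduction $E$ of the original germ $\fol{}$, produces a
singularity $q$ of saddle\textendash{}node type, with strong separatrix
$\Sigma_{q}$. The key observation I would establish is that every separatrix
at $q$ must be a component of the exceptional divisor of $E$. Indeed,
separatrices at $q$ either originate from exceptional components introduced
by the reduction of $p$ --- which are by construction components of the
exceptional divisor of $E$ --- or arise as strict transforms (under the
blow\textendash{}ups constituting the reduction of $p$) of separatrices of
the foliation at the intermediate stage, located in a neighborhood of $p$.
By hypothesis the latter reduce to the two divisor branches meeting at $p$,
whose strict transforms under subsequent blow\textendash{}ups remain parts
of those same branches, themselves components of the exceptional divisor of
$E$. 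Hence $\Sigma_{q}$ would be a component of the exceptional divisor of
$E$, contradicting the strong presentability of $\fol{}$.

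Having established both hypotheses, Lemma~\ref{lem:CamaLinsNetoSad} yields
at once that $p$ is reduced. The main delicate point, which I expect to
constitute the substance of the argument, is the bookkeeping of separatrices
under successive blow\textendash{}ups --- in particular the persistence of
the two original divisor branches through $p$ as components of the full
exceptional divisor of $E$, and the fact that no ``new''
non\textendash{}divisor separatrix can appear under blow\textendash{}up,
independently of whether the blow\textendash{}up is dicritical or not. Once
this is laid out, the argument reduces to a single direct appeal to strong
presentability together with an application of the cited lemma.
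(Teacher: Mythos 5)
Your proposal is correct and follows essentially the same route as the paper: the paper applies Lemma~\ref{lem:CamaLinsNetoSad} in contrapositive form (if $p$ were not reduced, a saddle--node would appear in its reduction, whose strong separatrix would then be forced to lie in a component of the exceptional divisor, contradicting strong presentability), which is logically identical to your direct verification of hypothesis~(1) by contradiction. The separatrix bookkeeping you flag as the delicate point is exactly the one-line observation the paper makes --- since no separatrix other than the divisor branches passes through $p$, any strong separatrix of a saddle--node in the reduction of $p$ is contained in a divisor.
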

\begin{proof}
D'après le Lemme~\ref{lem:CamaLinsNetoSad}, si $p$ n'était pas
réduite il y aurait un nœud\textendash{}col dans sa réduction. Puisque
aucune autre séparatrice ne passe par $p$, la séparatrice forte de
ce nœud\textendash{}col est incluse dans un diviseur. Cette situation
est exclue pour les feuilletages fortement présentables. 
\end{proof}

D'après le raisonnement développé pour le cas $d=1$, la singularité
réduite présente en $p_{1}$ est une selle rationnelle linéarisable
puisqu'elle ne peut être un nœud\textendash{}col (les deux séparatrices
passant par $p_{1}$ sont des composantes du diviseur).

La prochaine étape pour réduire $s$ est d'éclater $s_{2}$. On montre
ainsi de proche en proche qu'après $d+1$ étapes le transformé strict
$\hat{S}$ de $S$ intersecte en $\hat{s}$ la dernière composante
$D^{d+1}$ créée, et les autres singularités de $\hat{\fol{}}$ correspondent
aux points de croisement $p_{j}:=D^{j}\cap D^{j+1}$, en lesquelles
$\hat{\fol{}}$ est réduit et admet une selle rationnelle linéarisable.
La classe de Chern de chaque $D^{j}$ est $-2$ pour $j\leq d$ alors
que pour $D^{d+1}$ elle vaut $-1$. La singularité $p_{j}$ est une
selle linéarisable avec 
\begin{eqnarray*}
\csad[\hat{\fol{}}][D^{j+1}][p_{j}] & = & -\frac{j}{j+1}\,.
\end{eqnarray*}
Puisque $s_{1}$ n'est pas réduite le Lemme~\ref{lem:CamaLinsNetoSad}
indique que $\hat{s}$ est un nœud\textendash{}col. Son indice 
\begin{eqnarray*}
\csad[\hat{\fol{}}][D^{d+1}][\hat{s}] & = & -\frac{1}{d+1}
\end{eqnarray*}
est non nul donc $D^{d+1}$ est sa séparatrice faible. 

\end{enumerate}
\end{proof}

\subsection{Preuve du Théorème~\ref{thm:composante_init}}

Écrivons le morphisme $E$ comme le composé $E_{d}\circ\cdots\circ E_{2}\circ E_{1}$
d'éclatements ponctuels $E_{n}\,:\,\mathcal{M}_{n+1}\to\mathcal{M}_{n}$
avec $\mathcal{M}_{1}:=\left(\ww C^{2},0\right)$ et $\mathcal{M}_{d}:=\mathcal{M}$,
chacun faisant apparaître une nouvelle composante $\mathcal{D}_{n}$
du diviseur exceptionnel final $E^{-1}\left(0\right)$. On dit que
$n$ est l'\textbf{étape d'apparition} de $\mathcal{D}_{n}$, et on
considère l'application définie sur l'ensemble des composantes de
$E^{-1}\left(0\right)$~:
\begin{eqnarray*}
\tx{comp}\left(E^{-1}\left(0\right)\right) & \longrightarrow & \ww N\\
\mathcal{D}_{n} & \longmapsto & \frak{n}\left(\mathcal{D}_{n}\right):=n\,.
\end{eqnarray*}
On écrira $\fol n:=E_{n}^{*}\fol{}$ le transformé de $\fol{}$ par
$E_{n}$.

La preuve se déroule en deux épisodes, correspondants aux différents
temps de formation de la composante initiale. Une conséquence du Corollaire~\ref{cor:pas_attach_BM}
est qu'une composante initiale apparaît toujours après que toutes
les branches mortes s'y attachant ont été créées par le processus
de réduction.

\subsubsection{Apparition de la première branche morte}

De toutes les branches mortes $\left(B_{\ell}\right)_{1\leq\ell\leq m}$
attachées à la composante initiale $\mathcal{C}$, $m\geq2$, on choisit
celle qui apparaît en premier et on la nomme $B$. On procède de la
manière suivante~: en identifiant une branche $B_{\ell}$ à la collection
des maillons qui la forme, on choisit $B$ de sorte que
\begin{eqnarray*}
\min\frak{n}\left(B\right) & = & \min\frak{n}\left(\bigcup_{\ell\leq m}B_{\ell}\right)\,.
\end{eqnarray*}
Numérotons les composantes $\left(D^{j}\right)_{1\leq j\leq k}$ formant
$B$ de sorte que $j\mapsto n_{j}:=\frak{n}\left(D^{j}\right)$ soit
croissante. Si $D^{1}$ est le premier diviseur obtenu dans la réduction
de $\fol{}$, c'est\textendash{}à\textendash{}dire $n_{1}=1$, on
définit $S$ comme étant une séparatrice de $\fol{}$ (dont l'existence
est garantie par le théorème de Camacho\textendash{}Sad~\citep{CamaSad}).
Dans le cas contraire on prend pour $S$ une composante de $E_{n_{1}}^{-1}\left(0\right)$
adjacente à $D^{1}$.
\begin{lem}
\label{lem:naissance_branche}L'ordre d'adjacence des $D^{j}$ coïncide
avec leur ordre d'apparition~:
\begin{eqnarray*}
D^{j}\cap D^{\ell}\neq\emptyset & \Longleftrightarrow & \left|j-\ell\right|\leq1\,.
\end{eqnarray*}
Le coin $D^{j+1}\cap D^{j}$ est une selle rationnelle linéarisable.
De plus $D^{k}$ contient exactement deux singularités (une seule
si $k=1$)~: le coin $D^{k}\cap D^{k-1}$ (si $k>1$) et une singularité
$s_{k}$ par laquelle passe une séparatrice $S_{k}$ différente de
$D^{k}$. Cette séparatrice provient du transformé strict de $S$
dans $\fol{n_{k}}$.\end{lem}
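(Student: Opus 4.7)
The plan is to argue by induction on $k$, closely paralleling the inductive argument given for Lemma~\ref{lem:sing_pas_morte}(2). I first identify the singular point $p_{0} \in \mathcal{M}_{n_{1}-1}$ whose blow-up produces $D^{1}$: this is the origin when $n_{1}=1$, and otherwise a singular point of $\fol{n_{1}-1}$ lying on $S$. Because $B$ is the first dead branch to appear and is chain-shaped, I would show that the full collection $D^{1},\ldots,D^{k}$ is produced by successively blowing up, at each stage, the intersection of the current strict transform of $S$ with the most recently created component of $B$.

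The key step for claim~(a) is proving that, at each stage, the component $D^{j}$ carries in $\fol{n_{j}}$ exactly one non-reduced singularity to be further resolved, and that this singularity is the intersection with $D^{j}$ of the current strict transform of $S$. For the uniqueness, suppose $D^{j}$ had two such singular points: blowing both up would produce two distinct subtrees of descendants of $p_{0}$, but since $B$ is a chain only one such subtree could lie in $B$. The other would either be a further dead branch (contradicting the first-to-appear minimality of $B$) or would contribute extra singularities on $D^{j}$ in the final divisor, contradicting the fact that an inner maillon carries no other singularity than its corners in $B$. Corollary~\ref{cor:pas_attach_BM} rules out the pathological case of a dead branch attaching back onto $D^{j}$ at its non-reduced singularity. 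The fact that the unique non-reduced point lies on the strict transform of $S$ is then forced by the chain structure: at each stage the strict transform of $S$ must land on the non-reduced singularity in order to propagate all the way to the terminal $D^{k}$ as claim~(c) asserts.

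Claim~(b) follows from Camacho--Sad bookkeeping. Each corner $D^{j} \cap D^{j+1}$ has only the two divisor branches as local separatrices (no other separatrix crosses, by the chain construction), so by Lemma~\ref{lem:croisement_reduit} it is a reduced singularity. The fortement pr\'esentable hypothesis rules out a saddle-node at such a corner (its strong separatrix would be a divisor), and by Lemma~\ref{lem:CamaLinsNetoSad} the corner is a non-saddle-node reduced singularity. Applying the Camacho--Sad formula to the components $D^{k}, D^{k-1}, \ldots, D^{1}$ in succession, using the Chern classes $-1$ for $D^{k}$ and $-2$ for the inner components, one obtains inductively $\csad[\hat{\fol{}}][D^{j+1}][D^{j+1} \cap D^{j}] = -j/(j+1)$, identifying each corner as a linearizable rational saddle. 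Claim~(c) is then immediate: $s_{k}$ is the point where the iterated strict transform of $S$ meets $D^{k}$, and $S_{k}$ is that strict transform.

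The main obstacle is the ``single non-reduced singularity on each $D^{j}$'' step underlying claim~(a). It calls for a delicate combination of the dead-branch axiom (chain shape), the first-to-appear minimality of $B$ among dead branches attached to $\mathcal{C}$, and Corollary~\ref{cor:pas_attach_BM}. Once this step is in place, the remaining parts of the lemma follow from the Camacho--Sad computation outlined above, mirroring the argument in the proof of Lemma~\ref{lem:sing_pas_morte}(2).
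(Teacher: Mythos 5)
Your strategy is the paper's: walk along the chain, show each newly created component carries a unique further singularity (via Corollaire~\ref{cor:pas_attach_BM} and the link structure of a dead branch), show the corners are reduced via Lemme~\ref{lem:croisement_reduit} because no separatrix other than the two divisor branches can cross them without contradicting the dead-branch/maximality axioms, and identify the corners by index considerations. There is, however, a genuine gap in your claim~(b): the Camacho--Sad index alone does not make a corner a \emph{linearizable} saddle --- a resonant saddle of index $-j/(j+1)$ may perfectly well be non-linearizable. What forces linearizability in the paper is the holonomy: once $s_{1}$ is blown up, the corner $p_{1}=D^{1}\cap D^{2}$ is the \emph{only} singularity on $D^{1}$, so the holonomy of $\fol{n_{2}}$ along $D^{1}$ around $p_{1}$ is the identity; combined with the integer index $\csad[\fol{n_{2}}][D^{1}][p_{1}]=-2$ and the exclusion of saddle--nodes (forte pr\'esentabilit\'e), this pins down the linear model, and the periodic linear holonomy induced on $D^{2}$ lets the argument propagate up the chain. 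You cite Lemma~\ref{lem:sing_pas_morte}(2) as a template, but the holonomy step is precisely the part of that argument you do not import, and it is the one that does the work.

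Two smaller corrections. Your Camacho--Sad bookkeeping runs the wrong way: starting from $D^{k}$ you cannot isolate the index at $D^{k}\cap D^{k-1}$ without already knowing the index at $s_{k}$, and in the present setting $D^{k}$ does \emph{not} have self-intersection $-1$ in the final reduction, since $s_{k}$ is blown up at step $n_{k+1}$ (the $-1$ extremity occurs in the different situation of Lemma~\ref{lem:sing_pas_morte}). The computation must start at $D^{1}$ (self-intersection $-2$ after $s_{1}$ is blown up, a single singularity, hence index $-2$ there and $-1/2$ seen from $D^{2}$) and move up the chain. Finally, the first-to-appear minimality of $B$ cannot exclude a second subtree over $D^{j}$: such a subtree would be a dead branch attached to $D^{j}$, not to $\mathcal{C}$, so minimality among the branches $B_{\ell}$ attached to $\mathcal{C}$ says nothing about it; it is Corollaire~\ref{cor:pas_attach_BM} (which you also invoke) together with the bound on the number of corners of a maillon that performs the exclusion, exactly as in the paper.
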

\begin{proof}
$D^{1}$ contient au moins une singularité $s_{1}$ de $\fol{n_{1}}$
par laquelle passe le transformé strict $S_{1}$ de $S$ (si $n_{1}\neq1$
on a directement $S_{1}=S$). Le Corollaire~\ref{cor:pas_attach_BM}
montre que $\fol{n_{1}}$ n'a pas d'autre singularité sur $D^{1}$.
Si $k\geq2$, à l'étape $n_{2}$ aucune autre séparatrice que $D^{1}$
et $D^{2}$ ne passe par le coin $p:=D^{2}\cap D^{1}$, puisque sinon
son transformé strict dans $\hat{\fol{}}$ appartiendrait à une union
connexe de composantes reliant $D^{1}$ et $D^{2}$, contredisant
le fait que $B$ est une branche morte. Par suite la singularité $p$
est réduite (Lemme~\ref{lem:croisement_reduit}) et $D^{2}$ contient
exactement une singularité $s_{2}$ de $\fol{n_{2}}$, par laquelle
passe le transformé strict de $S_{1}$, distinct de $D^{2}$. L'holonomie
de $\fol{n_{1}}$ en $p$ le long de $D^{1}$ est l'identité et l'indice
de Camacho\textendash{}Sad $\csad[\fol{n_{1}}][D^{1}][p]$ est un
entier négatif. Puisque $p$ ne peut être un nœud\textendash{}col
la seule possibilité est que $p$ soit une selle rationnelle linéarisable,
comme dans la preuve du Corollaire~\ref{lem:sing_pas_morte}. Un
raisonnement par induction achève la démonstration du lemme.
\end{proof}

\subsubsection{Apparition de la seconde branche morte}

Soit $n_{k+1}$ l'étape de la réduction de $\fol{}$ qui correspond
à l'éclatement de $s_{k}$. On construit de proche en proche une chaîne
de composantes $\left(\hat{D}^{j}\right)_{1\leq j\leq\hat{k}}$ dont
l'ordre d'apparition $n_{k+j}$ correspond à l'ordre d'adjacence,
en utilisant les règles suivantes.
\begin{itemize}
\item \textbf{Par le coin $\hat{p}_{j}:=D^{k}\cap\hat{D}^{j}$ passe une
séparatrice $\hat{S}_{j}$ distincte de $D^{k}$ et $\hat{D}^{j}$.}\\
Alors $\hat{p}_{j}$ n'est pas réduite et $\hat{S}_{j}$ n'est pas
une composante du diviseur exceptionnel $E_{n_{k+j}}^{-1}\left(0\right)$
(Remarque~\ref{rem_reduction=00003Darbre}). À l'étape $n_{k+j+1}$
l'éclatement de $\hat{p}_{j}$ produit un diviseur $\hat{D}^{j+1}$
adjacent à $\hat{D}^{j}$ et $D^{k}$. Si une séparatrice distincte
de $\hat{D}^{j}\cup\hat{D}^{j+1}$ passait par le coin $\hat{D}^{j}\cap\hat{D}^{j+1}$
alors $B$ ne serait pas maximale~; cette singularité est donc réduite
(Lemme~\ref{lem:croisement_reduit}). C'est en fait une selle rationnelle
linéarisable d'après le même argument qu'employé pour prouver le Lemme~\ref{lem:naissance_branche}.
Si $\hat{D}^{j+1}$ porte une troisième singularité alors $\hat{D}^{j+1}=\mathcal{C}$
et le processus s'arrête.

\begin{figure}[H]
\hfill{}\includegraphics[height=5cm]{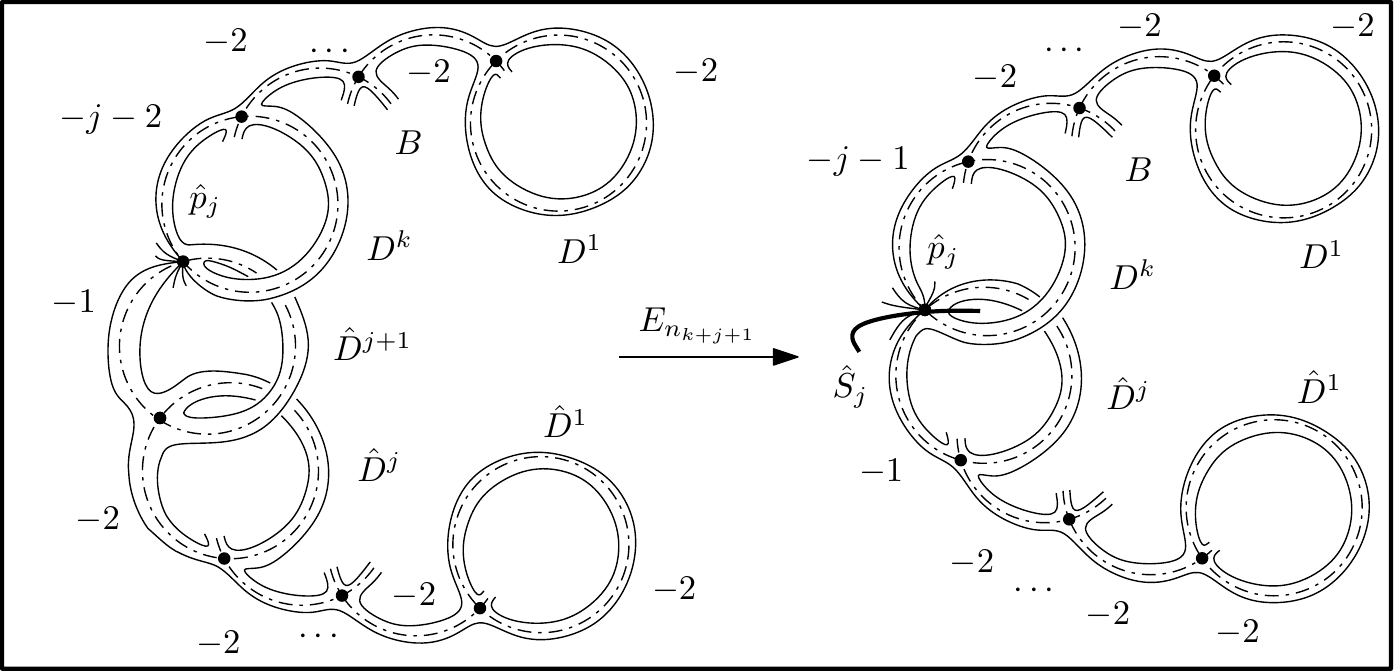}\hfill{}
\end{figure}

\item \textbf{Par $\hat{p}_{1}$ ne passe aucune autre séparatrice que $D^{k}$
et $\hat{D}^{1}$.}\\
D'après le Lemme~\ref{lem:croisement_reduit} $\hat{p}_{1}$ est
réduite. Puisque $B$ est maximale, $\hat{D}^{1}$ ne peut être un
maillon de $B$ et $\hat{p}_{1}$ est le point d'attache de $B$ sur
la composante initiale $\mathcal{C}=\hat{D}^{1}$. Le Corollaire~\ref{cor:pas_attach_BM}
interdit l'existence d'une deuxième branche morte attachée à $\mathcal{C}$.
Cette configuration ne peut donc se produire.

\begin{figure}[H]
\hfill{}\includegraphics[height=5cm]{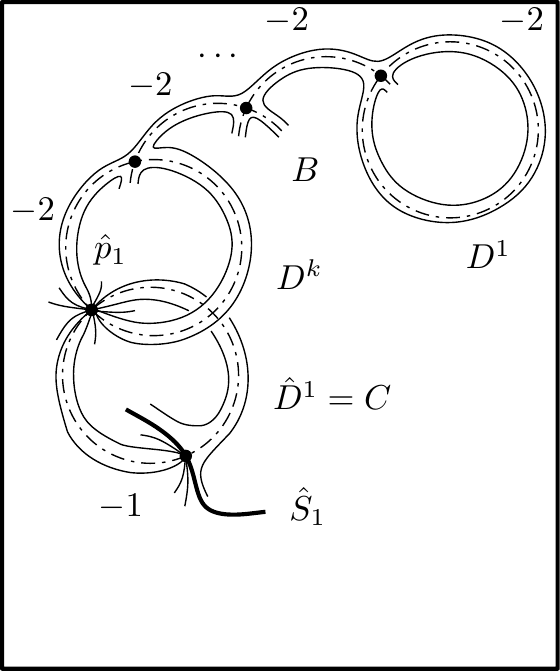}\hfill{}
\end{figure}

\item \textbf{Par $\hat{p}_{j}$, $j\geq2$, ne passe aucune autre séparatrice
que $D^{k}$ et $\hat{D}^{j}$.}\\
D'après le Lemme~\ref{lem:croisement_reduit}, $\hat{p}_{j}$ est
réduite. Dans ce cas $\hat{D}^{j}=\mathcal{C}$ et le Corollaire~\ref{cor:pas_attach_BM}
oblige la deuxième branche morte à contenir $\bigcup_{\ell=1}^{j-1}\hat{D}^{j}$.
Dans ce cas les seules séparatrices passant par le coin $\hat{D}^{j}\cap\hat{D}^{j-1}$
sont $\hat{D}^{j}$ et $\hat{D}^{j-1}$~: le coin est réduit. Par
une troisième singularité de $\mathcal{C}$ passent tous les transformés
stricts des séparatrices de $\fol{}$ et de la séparatrice $S$ considérée
pour construire la première branche morte. Le Corollaire~\ref{cor:pas_attach_BM}
interdit à $\mathcal{C}$ de porter une autre singularité de $\fol{n_{k+\hat{k}+1}}$
(et donc il ne peut y avoir plus de $m:=2$ branches mortes attachée
à $\mathcal{C}$). La construction s'achève.

\begin{figure}[H]
\hfill{}s\includegraphics[height=5cm]{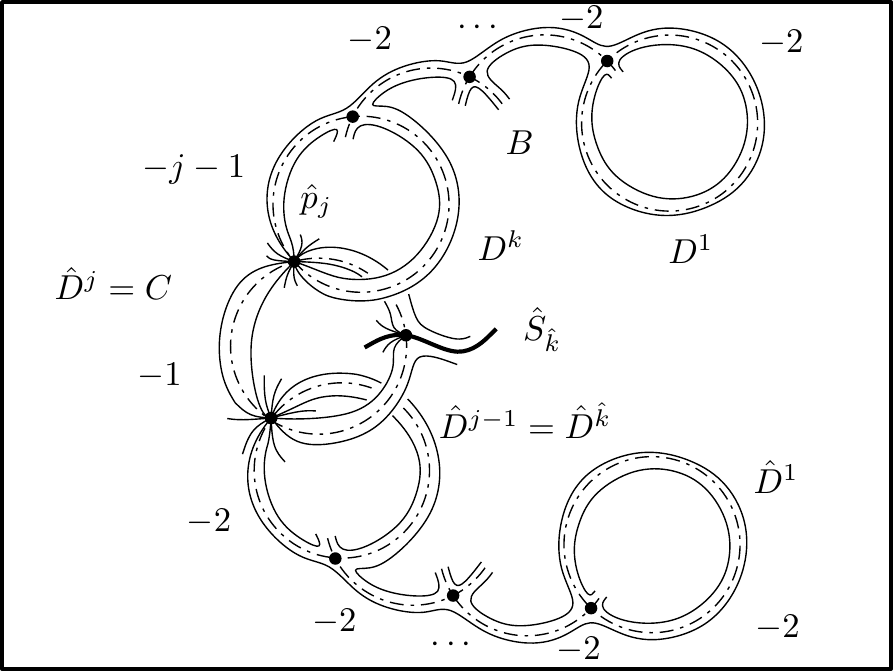}\hfill{}
\end{figure}

\end{itemize}
Ce procédé s'arrête après un nombre fini $j=\hat{k}+1\geq2$ d'étapes.
En prenant pour $\hat{S}_{\hat{k}}$ le transformé strict de $S$
dans $E_{n_{k+\hat{k}}}$ on prouve que $S$ ne pouvait être une composante
du diviseur $E_{n_{1}}^{-1}\left(0\right)$ (sinon $\hat{S}_{\hat{k}-1}$
serait une composante du diviseur en coupant deux autres, contredisant
la Remarque~\ref{rem_reduction=00003Darbre}) et
\begin{eqnarray*}
n_{1} & = & 1\,.
\end{eqnarray*}
Aussi l'extrémité de $B$ est le premier diviseur créé par la réduction
de $\fol{}$, entraînant l'unicité de $\mathcal{C}$ et le résultat.

\section{\label{sec:presentable}Feuilletages fortement présentables}

On montre maintenant le Théorème~C. Le premier ingrédient est le
suivant~:
\begin{thm}
\label{thm:bloc_adapt}Considérons un germe de feuilletage holomorphe
au voisinage d'une singularité élémentaire admettant deux séparatrices%
\footnote{Cette classe regroupe les singularités non dégénérées et les noeuds\textendash{}cols
convergents.%
}, induit par la $1$\textendash{}forme $\omega_{R}$ donnée en Section~\ref{sec:Notations},
avec $\lambda\notin\ww R_{>0}$. Il existe $\rho,\, r>0$ tels que
les propriétés suivantes sont satisfaites sur le polydisque $\mathcal{U}\left(\rho,r\right):=\rho\ww D\times r\ww D$.
\begin{enumerate}
\item Le feuilletage est holomorphe avec une seule singularité sur un voisinage
de $\adh{\mathcal{U}\left(\rho,r\right)}$.
\item Pour $x_{*}\in\rho\ww S^{1}$ considérons la paramétrisation $\gamma\,:\, t\in\left[0,1\right]\mapsto x_{*}\exp\left(2\ii\pi t\right)$
du cercle $\rho\ww S^{1}$, ainsi que l'ouvert maximal sur lequel
$\holo{\gamma}$ est définie~:
\begin{eqnarray*}
\Sigma & := & \left\{ \left(x_{*},y_{*}\right)\,:\,\left|y_{*}\right|<r\,,\,\gamma\mbox{ se relève dans }\mbox{\ensuremath{\fol{}}}|_{\mathcal{U}\left(\rho,r\right)}\mbox{ à partir de }\left(x_{*},y_{*}\right)\right\} \,.
\end{eqnarray*}
Alors $\Sigma$ est un disque analytique contenant $\left(x_{*},0\right)$,
à bord analytique par morceaux. De plus à $\rho$ fixé il existe $M_{\rho}>0$
telle que pour tout $r$ assez petit
\begin{eqnarray*}
\mathbf{e}\left(\partial\adh{\Sigma}\right) & < & M_{\rho}r\,.
\end{eqnarray*}

\item $\left(\mathcal{U}\left(\rho,r\right)\backslash\left\{ xy=0\right\} \right)\cup\tx{Susp}_{\gamma}\left(\Sigma\right)$
est un bloc feuilleté contrôlé.
\item Si cette classe de singularité apparaît dans une composante $K_{\alpha}$
de la réduction $\hat{\fol{}}$ alors $K_{\alpha}$ est contrôlable.
\end{enumerate}
\end{thm}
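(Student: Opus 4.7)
Pour (1), le choix de $\rho,r>0$ assez petits pour que $R$ soit holomorphe sur un voisinage de $\adh{\mathcal{U}\left(\rho,r\right)}$ et que les Propositions~\ref{prop:lif_scnx_ND} et~\ref{prop:lif_scnx_NC} s'y appliquent est imm\'ediat~; l'unicit\'e de la singularit\'e r\'esulte de l'isolation des z\'eros de $P\wedge Q$.

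Le c\oe{}ur technique du th\'eor\`eme est le point~(2). En coordonn\'ee logarithmique $z=\log x$, le lacet $\gamma$ correspond au segment vertical $z\left(t\right)=\log x_{*}+2\ii\pi t$, et par transversalit\'e de $\fol{}$ aux fibres de $\tilde\Pi$ la seule obstruction au rel\`evement de $\gamma$ \`a partir de $\left(x_{*},y_{*}\right)$ est la sortie de $\left|y\left(t;y_{*}\right)\right|$ du disque $r\ww D$, ce qui donnera l'identification
\begin{eqnarray*}
\Sigma & = & \left\{ \left(x_{*},y_{*}\right)\,:\,\max_{t\in\left[0,1\right]}\left|y\left(t;y_{*}\right)\right|<r\right\} \,.
\end{eqnarray*}
La structure de disque analytique viendra de ce que $y_{*}\mapsto y\left(t;y_{*}\right)$ est un biholomorphisme d\'ependant analytiquement de $t$, de lin\'earisation en $0$ proche de l'holonomie lin\'eaire $y_{*}\mapsto e^{2\ii\pi/\lambda}y_{*}$, ce qui entra\^inera que $\Sigma$ est \'etoil\'e par rapport \`a $0$. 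La r\'egularit\'e analytique par morceaux du bord d\'ecoulera du th\'eor\`eme des fonctions implicites appliqu\'e \`a l'\'equation $\left|y\left(t_{*};y_{*}\right)\right|^{2}=r^{2}$, param\'etr\'ee par l'instant $t_{*}=t_{*}\left(y_{*}\right)$ o\`u le module atteint son maximum. La borne de rugosit\'e $\mathbf{e}\left(\partial\adh{\Sigma}\right)<M_{\rho}r$ suivra d'une comparaison avec le mod\`ele lin\'eaire, pour lequel $\Sigma$ est un disque centr\'e en $0$ (donc de rugosit\'e nulle), la perturbation non lin\'eaire $R$ induisant un \'ecart angulaire au cercle id\'eal major\'e par $O\left(r\right)$ uniform\'ement en $\rho$ fix\'e.

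Le point~(3) consistera \`a v\'erifier les conditions (BC1)\textendash(BC4) de la D\'efinition~\ref{def_bloc_controle}. L'\'equivalence homotopique (BC1) proviendra d'une r\'etraction par d\'eformation explicite de $\mathcal{U}\left(\rho,r\right)\backslash\left\{ xy=0\right\} $ sur un voisinage des tubes de Milnor. Pour (BC2), la transversalit\'e (BF2) est claire, tandis que (BF3), (BF1) et (BF4) d\'ecouleront directement des Propositions~\ref{prop:lif_scnx_ND} et~\ref{prop:lif_scnx_NC}, les faisceaux de stabilit\'e fournissant les homotopies tangentielles requises. La propri\'et\'e (BC3) est vraie par construction pour $\tx{Susp}_{\gamma}\left(\Sigma\right)$, et s'obtient de mani\`ere analogue pour toute autre composante du bord en variant le lacet g\'en\'erateur. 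La majoration (BC4) est exactement la borne de rugosit\'e du point~(2).

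Le plus d\'elicat est le point~(4), la contr\^olabilit\'e de $K_{\alpha}$. Partant d'un ensemble $B$ de type suspension au\textendash dessus de la composante d'entr\'ee $\mathcal{C}_{\alpha,\ell}$, le plan sera de propager son bord par transport holonome r\'egulier le long du lieu r\'egulier de $K_{\alpha}$ jusqu'aux voisinages des singularit\'es, o\`u s'appliquera la construction du point~(3). Les propri\'et\'es (BC3') et (BC4') suivront de la d\'ependance analytique de l'holonomie en les conditions initiales. L'obstacle principal \`a surmonter sera de garantir que l'intersection $\Sigma\cap\holo{\gamma}\left(\Sigma\right)$ reste connexe apr\`es transport, ce qui exige la condition $\mathbf{e}\left(\partial\adh{\Sigma}\right)<\pi/2$ de la D\'efinition~\ref{def_rugo}~: c'est pr\'ecis\'ement la d\'ependance lin\'eaire en $r$ de la majoration $M_{\rho}r$ obtenue en~(2) qui permettra de la satisfaire pour $r$ assez petit, et la fonction $\frak{d}_{\alpha}$ sera construite comme une variante croissante de $\varepsilon\mapsto\left(1+M_{\rho}\right)\varepsilon$ tendant vers $0$ en $0$.
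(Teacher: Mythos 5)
Your treatment of (1) and (2) is essentially the paper's: the paper also identifies $\Sigma$ with the set of $y_{*}$ whose lift stays in $r\ww D$, obtains the piecewise-analytic boundary from the real-analytic locus where $\left|y\left(t,y_{*}\right)\right|$ touches $r$ with vanishing logarithmic derivative, and derives the bound $\mathbf{e}\left(\partial\adh{\Sigma}\right)\leq\pi^{2}MD_{\rho}r$ from the estimate $\left|\ppp{y_{*}}F\left(1,y_{*}\right)\right|\leq2\pi MD_{\rho}$ (your \og comparaison avec le mod\`ele lin\'eaire \fg{}); note only that star-shapedness of $\Sigma$ is not read off the linearization at $0$ but is a consequence of the finiteness of the roughness (D\'efinition~\ref{def_rugo}), the simple connectivity being obtained by a tube-trapping argument.

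The genuine gap is in your point (3), where you claim that (BF4) \og d\'ecoule directement \fg{} des Propositions~\ref{prop:lif_scnx_ND} et~\ref{prop:lif_scnx_NC}. Simple connectivity of the leaves does \emph{not} imply $1$\textendash connexity of a boundary component: Proposition~\ref{prop:bord_NC_pas_1_connexe} of this very paper shows that for a saddle\textendash node the strong boundary component $\left\{ \left|y\right|=r\right\}$ is \emph{never} $1$\textendash connected, even though every leaf is simply connected by Proposition~\ref{prop:lif_scnx_NC}; your inference would prove $1$\textendash connexity there too, a contradiction. What (BF4) actually requires is control of the trace of each leaf closure on the chosen component $\tx{Susp}_{\gamma}\left(\Sigma\right)\subset\rho\ww S^{1}\times r\ww D$. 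In the non-degenerate case the paper gets connectedness of this trace because some ray of every stability beam reaches $\left\{ \re z=\ln\rho\right\}$ (choosing $\theta$ with $\re{-\lambda\theta}>0$). In the saddle\textendash node case the trace is in general disconnected (this is the \og chemins inamovibles \fg{} phenomenon), and the paper needs the dedicated Lemma~\ref{lem:1_cnx_NC} \textemdash{} if a leaf closure meets $\left\{ \re z=\ln\rho\right\}$ in two distinct saddle bands then the whole segment between them lies in it, proved via the quantitative inequality $C_{\rho}<\nicefrac{2}{\pi\rho^{k}}$ \textemdash{} together with the count of $k+1$ saddle bands in a strip of height $2\pi$, to rescue $1$\textendash connexity of the suspension over a single turn of $\rho\ww S^{1}$. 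This is exactly where the strong-presentability hypothesis (the suspension is taken over the circle in the \emph{weak} separatrix) does its work, and it is absent from your sketch. The same omission affects your point (4): the rabotage process used by the paper rests on the tangential retraction of the leaves onto $\rho\ww S^{1}\times r\ww D$, again a property of the weak component only, not a generic consequence of analytic dependence of the holonomy.
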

Ce théorème, démontré en Section~\ref{sub:blocs_adapt_cont}, permet
de placer des blocs contenant les singularités nœuds\textendash{}cols
et quasi\textendash{}résonnantes dans la construction de Mar\'in\textendash{}Mattei.
Il faut toutefois obtenir des résultats plus précis pour le passage
des coins. Pour un feuilletage fortement présentable cela ne peut
se produire qu'avec des singularités non dégénérées~; nous préciserons
cela en Section~\ref{sub:coins}. Dès lors un feuilletage présentable
 est incompressible dans l'ouvert 
\begin{eqnarray*}
\mathcal{U} & := & E\left(\bigcup_{\alpha\in\mathcal{A}}\mathcal{B}_{\alpha}\right)\cup\mathcal{S}
\end{eqnarray*}
où~:
\begin{itemize}
\item les blocs $\mathcal{B}_{\alpha}$ correspondant à des singularités
$s$ autres que des nœuds ($\lambda>0$) sont ceux donnés par~(3)
de ce théorème (\emph{modulo }le passage des coins), on prendra alors
$\rho_{s}$ et $r_{s}$ inférieurs à $\rho$ et $r$,
\item les autres blocs $\mathcal{B}_{\alpha}$ sont pris comme dans la construction
d'origine de Mar\'in\textendash{}Mattei.
\end{itemize}
\bigskip{}

Il est tout à fait probable que certains feuilletages non fortement
présentables soient incompressibles (par exemple s'il n'y a qu'un
seul nœud\textendash{}col dans la réduction). Dans ces cas\textendash{}là
c'est la localisation de la construction qui est mise en défaut~:
\begin{prop}
\label{prop:bord_NC_pas_1_connexe}Soit $\fol{}$ un germe de feuilletage
holomorphe au voisinage d'une singularité de type nœud\textendash{}col,
induit par la $1$\textendash{}forme $\omega_{R}$ donnée en Section~\ref{sec:Notations}.
On note $\mathcal{S}$ la séparatrice faible d'un nœud\textendash{}col
convergent, et $\mathcal{S}:=\emptyset$ dans le cas divergent. Soit
$0\in D\subset r\ww D$ un disque fermé conforme et $\mathcal{V}$
un voisinage simplement connexe de $\left(0,0\right)$, assez petit,
fibré au\textendash{}dessus de $D$ par la projection $\left(x,y\right)\mapsto y$.
Prenons une composante $V$ de $\partial\mathcal{V}$, de type suspension
au\textendash{}dessus de $\left\{ 0\right\} \times\partial D$. Alors
$V$ ne peut\textendash{}être $1$\textendash{}connexe dans $\mathcal{V}\backslash\mathcal{S}$.
\end{prop}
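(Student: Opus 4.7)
The plan is to exhibit a leaf $\mathcal{L}$ together with points $p_1, p_2 \in V \cap \mathcal{L}$ on the transversal $\Sigma := V \cap \{y = y_*\}$ for some $y_* \in \partial D$, and paths $\alpha \subset V$ and $\beta \subset (\mathcal{V}\setminus\mathcal{S}) \cap \mathcal{L}$ joining $p_1$ to $p_2$, that are homotopic in $\mathcal{V}\setminus\mathcal{S}$ but for which no $\zeta \subset V \cap \mathcal{L}$ can be simultaneously homotopic to $\alpha$ in $V$ and to $\beta$ in $\mathcal{L}$. The key topological fact is that $V$ is homeomorphic to the mapping torus of the strong holonomy $h := \holo{\gamma}|_\Sigma$ around the core loop $\gamma : t \in [0,1] \mapsto (0, y_* e^{2\pi i t})$; it is therefore a solid torus with $\pi_1(V) = \mathbb{Z}$ generated by $\gamma$.

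Since $h$ is tangent to the identity at order $k+1$, its Leau--Fatou flower has $k$ attracting petals. I choose $x_0 \in \Sigma$ deep inside one such petal, so that all iterates $h^n(x_0)$ remain in $\Sigma$ and converge to $0$, and set $p_1 := (x_0, y_*)$, $p_2 := (h(x_0), y_*)$, both on the leaf $\mathcal{L}$ through $p_1$. The intersection $V \cap \mathcal{L}$ is then a chain of arcs indexed by the orbit $\{h^n(x_0)\}_{n\in\mathbb{Z}}$, each arc being a leaf-lift of $\gamma$ winding once around it; hence any path from $p_1$ to $p_2$ inside $V \cap \mathcal{L}$ has class $1 \in \pi_1(V)$. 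I take $\alpha \subset \Sigma \subset V$ to be the short segment from $p_1$ to $p_2$ at constant $y = y_*$, of class $0$ in $\pi_1(V)$, and $\beta$ to be a leaf-path from $p_1$ to $p_2$.

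In the divergent case the argument concludes at once: $\mathcal{S} = \emptyset$ makes $\mathcal{V}\setminus\mathcal{S} = \mathcal{V}$ simply connected, so $\alpha \simeq \beta$ automatically, yet no path in $V\cap\mathcal{L}$ (all of class $1$ in $V$) can be homotopic to $\alpha$ (of class $0$) in $V$, violating 1-connectedness. In the convergent case $\mathcal{S} = \{y=0\}$ the inclusion $V \hookrightarrow \mathcal{V}\setminus\mathcal{S}$ induces an isomorphism on $\pi_1$, so $\alpha$ must be replaced by a path matching $\beta$'s winding around $\{y=0\}$; the failure is then produced by exploiting the asymmetric sectorial behavior of the leaf (accumulating on $\{y=0\}$ in node sectors, exiting through $\{|y|=r\}$ in saddle sectors) to build an immovable $\beta$ whose tangential homotopy class in $\mathcal{L}$ cannot be realized inside $V\cap\mathcal{L}$. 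The main obstacle is this convergent case, which requires tracking homotopy classes simultaneously in $V$, in $\mathcal{L}$ and in $\mathcal{V}\setminus\mathcal{S}$ using the multi-sheeted structure of $\mathcal{L} \cap \mathcal{V}$ produced by the flower of $h$.
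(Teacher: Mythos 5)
Your divergent case is correct and in fact follows a cleaner route than the paper's: instead of constructing explicit tangent paths, you exploit that the strong holonomy $h(x)=x+2\ii\pi x^{k+1}+\cdots$ is parabolic and non trivial, so that the component of $V\cap\mathcal{L}$ through $p_{1}=(x_{0},y_{*})$ is a chain of holonomy lifts along the orbit $\left(h^{n}(x_{0})\right)$, an arc in which every path from $p_{1}$ to $p_{2}=(h(x_{0}),y_{*})$ has class $1$ in $\pi_{1}(V)\simeq\ww Z$ while the segment $\alpha\subset\Sigma$ has class $0$; since $\mathcal{V}$ is simply connected the ambient homotopy between $\alpha$ and any leaf path $\beta$ is free, and $1$\textendash{}connexity fails. (The paper instead reduces the divergent case to the convergent one by sectorial normalization.) Two small slips do not affect this half: the backward iterates $h^{-n}(x_{0})$ need not all stay in $\Sigma$, and $V\cap\mathcal{L}$ may have several chain components, but the one containing $p_{1},p_{2}$ is still an arc.

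The convergent case, however, has a genuine gap, and it is not merely a deferred computation: the configuration you set up cannot be repaired. With $\mathcal{S}=\left\{ y=0\right\}$ the inclusion $V\hookrightarrow\mathcal{V}\backslash\mathcal{S}$ is a $\pi_{1}$\textendash{}isomorphism, so an admissible pair $(\alpha,\beta)$ must have equal winding around $\left\{ y=0\right\}$. But for $h$\textendash{}consecutive points $p_{1},p_{2}$ \emph{every} tangent path from $p_{1}$ to $p_{2}$ winds exactly once around $\left\{ y=0\right\}$: in the coordinate $z=\log x$ the leaf is a graph $y=y(z)$ on which $\arg y$ is a well\textendash{}defined continuous function, and its values at the lifts of $p_{1}$ and $p_{2}$ differ by exactly $2\pi$ (this is visible on the model $y=c\exp(-x^{-k}/k)$). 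Hence no $\beta$ homotopic in $\mathcal{V}\backslash\mathcal{S}$ to your class\textendash{}$0$ path $\alpha$ exists with these endpoints, and your obstruction evaporates. The paper's proof circumvents this by choosing the two endpoints in \emph{distinct} components of $\adh{\tilde{\lif{}}}\cap\left\{ \left|y\right|=r\right\}$ (distinct strong\textendash{}holonomy orbits, one per saddle sector) and joining them by an explicit ``immovable'' tangent path $\Gamma$ along an iso\textendash{}modulus curve $\left\{ \left|x\right|=\nf{\rho}2\right\}$, for which the net variation of $\arg y$ is $0$ (so $\Gamma$ is homotopic in $\mathcal{V}\backslash\mathcal{S}$ to a segment of $\Sigma$) while no path of $V\cap\mathcal{L}$ joins its endpoints at all; this requires the quantitative control of $\arg y$ and $\left|y\right|$ across node and saddle sectors carried out in Section~\ref{sub:chemins_inamov}, none of which appears in your proposal. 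Your final sentence gestures at exactly this mechanism, but constructing the immovable path and verifying its zero winding is the heart of the convergent case and is missing.
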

Cette proposition, prouvée en Section~\ref{sub:chemins_inamov},
interdit de plonger un nœud\textendash{}col dont la séparatrice forte
est une composante du diviseur exceptionnel, dans un bloc local suffisamment
petit vérifiant~(BF4). La condition de forte présentabilité seule
ne permet donc pas de caractériser les feuilletages incompressibles.

\bigskip{}

Le Théorème~6.1.1 de~\citep[p900]{MarMat} nous permet d'obtenir
une transversale $\mathcal{C}'$ complètement connexe dans $\bigcup_{\alpha\notin{\tt NC}}\mathcal{B}_{\alpha}$,
où ${\tt NC}$ est le sous\textendash{}ensemble de $\mathcal{A}$
correspondant aux blocs contenant les nœuds\textendash{}cols (y compris
en présence de selles quasi\textendash{}résonnantes). Soit $s$ une
singularité nœud\textendash{}col de $\hat{\fol{}}$, appartenant à
une composante $D$ du diviseur exceptionnel. On note $\fol{}'$ le
feuilletage poussé\textendash{}en\textendash{}avant de $\hat{\fol{}}$
par le biholomorphisme $\psi_{s,D}$ introduit en Section~\ref{sub:decoup}.
Il est aisé de voir que chaque transversale $\left\{ y=y_{0}\right\} $
sature, par $\fol{}'$, un voisinage épointé de $D=\left\{ y=0\right\} $.
Rajouter $\psi_{s,D}^{-1}\left(\left\{ y=y_{0}\right\} \right)$ à
$\mathcal{C}'$ est pourtant une approche vouée à l'échec~:
\begin{cor}
\label{cor:transv_pas_1_connexe}Sous les hypothèses de la proposition
précédente aucune transversale $\left\{ y=y_{0}\right\} $ n'est $1$\textendash{}connexe
dans $\mathcal{V}\backslash\mathcal{S}$.
\end{cor}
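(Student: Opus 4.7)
L'id\'ee est d'exploiter la Proposition~\ref{prop:bord_NC_pas_1_connexe} par contraposition~: nous allons montrer que la $1$-connexit\'e suppos\'ee d'une transversale $\Sigma := \{y = y_0\}$ dans $\mathcal{V}\setminus\mathcal{S}$ entra\^inerait l'existence d'un chemin de $V\cap\mathcal{L}$ r\'ealisant tangentiellement l'homotopie refus\'ee par la proposition pour une composante du bord $V$ de type suspension convenablement choisie, ce qui est exclu.

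On se ram\`ene d'abord au cas $y_0 \in \partial D$ en restreignant $\mathcal{V}$ \`a un sous\textendash{}polydisque fibr\'e au\textendash{}dessus d'un disque $D' \subset D$ tel que $y_0 \in \partial D'$. On choisit alors un petit disque conforme $\Sigma_V \subset \Sigma$ centr\'e en $(0, y_0)$ sur lequel l'holonomie $\holo{\gamma}$ le long d'une param\'etrisation $\gamma$ de $\partial D$ bas\'ee en $y_0$ est holomorphe, et l'on pose $V := \tx{Susp}_\gamma(\Sigma_V) \subset \partial\mathcal{V}$. Les hypoth\`eses de la Proposition~\ref{prop:bord_NC_pas_1_connexe} sont alors satisfaites, fournissant une feuille $\mathcal{L}$ et des chemins $\alpha \subset V$, $\beta \subset (\mathcal{V}\setminus\mathcal{S})\cap\mathcal{L}$ d'extr\'emit\'es communes $q_0, q_1$, homotopes dans $\mathcal{V}\setminus\mathcal{S}$, mais tels qu'aucun chemin de $V\cap\mathcal{L}$ ne soit homotope \`a la fois \`a $\alpha$ dans $V$ et \`a $\beta$ dans $\mathcal{L}$.

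La r\'etraction par d\'eformation $V \to \Sigma_V$ obtenue en inversant la suspension transporte les extr\'emit\'es $q_i$ le long de chemins tangents $\xi_i \subset V \cap \mathcal{L}$ jusqu'\`a des points $\tilde q_i \in \Sigma_V \cap \mathcal{L}$, et produit un chemin $\tilde\alpha \subset \Sigma_V$ tel que $\alpha \simeq \xi_0 \cdot \tilde\alpha \cdot \xi_1^{-1}$ dans $V$. En posant $\tilde\beta := \xi_0^{-1} \cdot \beta \cdot \xi_1 \subset \mathcal{L}$, une composition directe des homotopies montre que $\tilde\alpha$ et $\tilde\beta$ sont homotopes dans $\mathcal{V}\setminus\mathcal{S}$ \`a extr\'emit\'es $\tilde q_0, \tilde q_1$ fix\'ees. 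Si $\Sigma$ \'etait $1$-connexe il existerait donc un chemin $\tilde\delta \subset \Sigma \cap \mathcal{L}$ homotope \`a $\tilde\alpha$ dans $\Sigma$ et \`a $\tilde\beta$ dans $\mathcal{L}$.

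Mais l'intersection $\Sigma \cap \mathcal{L}$ \'etant discr\`ete par transversalit\'e, $\tilde\delta$ est n\'ecessairement le chemin constant en un point $p \in \Sigma_V \cap \mathcal{L}$, et en particulier $\tilde q_0 = \tilde q_1 = p$. En transportant en sens inverse, on constate alors que $\xi_0 \cdot \xi_1^{-1}$ est un chemin de $V \cap \mathcal{L}$ simultan\'ement homotope \`a $\alpha$ dans $V$ (puisque $\tilde\alpha$ est contractile dans le disque $\Sigma_V$) et \`a $\beta$ dans $\mathcal{L}$ (par la trivialit\'e homotopique de $\tilde\beta$ fournie par la $1$-connexit\'e suppos\'ee), contredisant le choix initial de $\alpha, \beta$. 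L'obstacle principal r\'eside dans la justification pr\'ecise du transport par la r\'etraction $V \to \Sigma_V$ et du passage en sens inverse des homotopies~; ceci repose sur la structure de fibr\'e trivialis\'e au\textendash{}dessus du cercle que porte $V$ en tant qu'ensemble de type suspension, propri\'et\'e d\'ej\`a centrale dans la preuve de la Proposition~\ref{prop:bord_NC_pas_1_connexe}.
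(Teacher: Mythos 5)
Votre strat\'egie est r\'eellement diff\'erente de celle du texte~: vous cherchez \`a d\'eduire formellement le corollaire de l'\'enonc\'e de la Proposition~\ref{prop:bord_NC_pas_1_connexe} prise comme bo\^{\i}te noire, en transf\'erant la $1$\textendash{}connexit\'e suppos\'ee de la transversale $\Sigma=\left\{ y=y_{0}\right\} $ \`a la composante de suspension $V$. Le texte proc\`ede autrement~: la preuve de la Proposition construit explicitement des chemins inamovibles $\Gamma$ dont les deux extr\'emit\'es, distinctes, appartiennent \`a une m\^eme transversale $\left\{ y=\tx{cst}\right\} $ et qui sont homotopes dans $\mathcal{V}\backslash\mathcal{S}$ \`a un chemin de cette transversale~; comme $\left\{ y=y_{0}\right\} \cap\mathcal{L}$ est discret, seuls des chemins constants y vivent et aucun ne joint deux points distincts, de sorte que le corollaire est imm\'ediat \`a partir de cette construction. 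Votre d\'etour n'est donc pas n\'ecessaire, et surtout il comporte une lacune r\'eelle.

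La lacune se situe exactement l\`a o\`u vous la signalez, mais elle est plus s\'erieuse qu'une justification \`a compl\'eter~: la r\'etraction par d\'eformation $V\to\Sigma_{V}$ n'existe pas. Un ensemble de type suspension au\textendash{}dessus de $\left\{ 0\right\} \times\partial D$ est un tore d'application, homotopiquement \'equivalent \`a un cercle et non au disque $\Sigma_{V}$~; un chemin $\alpha\subset V$ d'enroulement non nul autour de $\partial D$ ne s'\'ecrit pas $\xi_{0}\cdot\tilde{\alpha}\cdot\xi_{1}^{-1}$ avec $\tilde{\alpha}\subset\Sigma_{V}$ et $\xi_{0},\xi_{1}$ des arcs tangents de la suspension. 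Pour absorber l'enroulement il faudrait it\'erer $\holo{\gamma}$ et son inverse \`a l'int\'erieur de $\Sigma_{V}$, ce qui n'est pas disponible en g\'en\'eral~; on obtient seulement une alternance de chemins de $\Sigma_{V}$ et d'arcs tangents complets, qu'on ne sait pas r\'eduire \`a la forme voulue. S'ajoute un second probl\`eme~: votre r\'eduction au cas $y_{0}\in\partial D$ remplace $\mathcal{V}$ par un sous\textendash{}voisinage $\mathcal{V}'$, or la trivialit\'e homotopique de $\tilde{\beta}$ fournie par la $1$\textendash{}connexit\'e suppos\'ee a lieu dans $\left(\mathcal{V}\backslash\mathcal{S}\right)\cap\mathcal{L}$, alors que la contradiction avec la Proposition exige une homotopie dans $\left(\mathcal{V}'\backslash\mathcal{S}\right)\cap\mathcal{L}$~; rien ne permet de restreindre cette homotopie au morceau de feuille plus petit. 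La voie directe du texte \'evite ces deux \'ecueils, puisque le chemin inamovible a d\'ej\`a ses extr\'emit\'es dans la transversale vis\'ee et que son homotopie ambiante y est construite explicitement.
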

Ce corollaire sera démontré en Section~\ref{sub:chemins_inamov}. 

\bigskip{}

Pour obtenir une transversale complète $\mathcal{C}$ dans $\mathcal{U}$
on adjoint à $\mathcal{C}'$, pour chaque singularité nœud\textendash{}col
$s\in D$ où $D$ est la séparatrice faible de $\hat{\fol{}}$, le
disque conforme $\Omega\left(x_{0},y_{0},\varepsilon\right)$ (ou
plutôt, son image par le morphisme de réduction), dépendant de $\left(x_{0},y_{0}\right)\in\partial\adh{\mathcal{U}\left(\rho,r\right)}$
et $r\geq\varepsilon>0$, défini dans la coordonnée $\psi_{s,D}$
par~: 
\begin{eqnarray*}
\Omega\left(x_{0},y_{0},\varepsilon\right) & := & \left\{ \left(x,y_{0}\right)\,:\,0<\left|x\right|<\rho\,,\,\sin\left(k\arg x\right)>\nf 12\right\} \cup\left\{ x_{0}\right\} \times\varepsilon\ww D\,\,\,.
\end{eqnarray*}

\begin{cor}
\label{cor:TCC}Il existe $\varepsilon>0$ tel que, pour tout $\left(x_{0},y_{0}\right)\in\partial\adh{\mathcal{U}\left(\rho,r\right)}$,
l'ouvert $\Omega_{\left(x_{0},y_{0},\varepsilon\right)}$ 
\begin{enumerate}
\item sature un voisinage de $\left(0,0\right)$ épointé de $\left\{ x=0\right\} $,
\item est $1$\textendash{}connexe dans $U$$\left(\rho,r\right)$.
\end{enumerate}
\end{cor}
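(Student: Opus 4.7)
Le plan est d'\'etablir s\'epar\'ement la saturation~(1) et la $1$\textendash{}connexit\'e~(2). Pour~(1), je fixe une feuille $\mathcal{L}$ de $\fol{}$ non contenue dans $\{x = 0\}$ et montre qu'elle rencontre $\Omega$. Les $k$ sous\textendash{}secteurs $N_i := \{0 < |x| < \rho,\, \sin(k \arg x) > \nf{1}{2}\}$ traversent les lignes de Stokes du n\oe{}ud\textendash{}col, de sorte que dans chaque $N_i$ le graphe $y = y_i(x)$ de la feuille effectue la transition entre le r\'egime \og n\oe{}ud\fg{} de module petit (pr\`es de $\arg(x^k) = 0$) et le r\'egime \og col\fg{} de module grand (pr\`es de $\arg(x^k) = \pi$). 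Deux cas se pr\'esentent~: si $\mathcal{L}$ admet un point \`a $x = x_0$ de module $|y| < \varepsilon$, elle rencontre $\Sigma(x_0, \varepsilon) := \{x_0\} \times \varepsilon \ww D$ directement~; sinon, le comportement transitionnel sur $N_i$ combin\'e \`a la d\'ecroissance plate $y_i(x) \to 0$ au sommet force $|y_i|$ \`a atteindre la valeur $|y_0|$, et un argument d'enlacement utilisant l'holonomie de Dulac le long de $\{y = 0\}$ donne l'\'egalit\'e $y_i(x) = y_0$. Le choix d'un $\varepsilon$ suffisamment petit uniform\'ement en $(x_0, y_0) \in \partial\adh{\mathcal{U}(\rho, r)}$ relie les deux cas.

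Pour~(2), j'exploite la simple connexit\'e des feuilles \'etablie \`a la Proposition~\ref{prop:lif_scnx_NC} ainsi que la structure de $\Omega$. Chaque composante connexe de $\Omega$ est contractile~: un secteur \`a $y = y_0$, le disque $\Sigma(x_0, \varepsilon)$, ou leur bouquet au point $(x_0, y_0)$ lorsque celui\textendash{}ci appartient aux deux. La transversalit\'e de $\Sigma$ et des tranches $y = y_0$ \`a $\fol{}$ rend $\Omega \cap \mathcal{L}$ discret dans chaque composante. Pour des chemins $\alpha \subset \Omega$ et $\beta \subset \mathcal{L}$ de m\^emes extr\'emit\'es homotopes dans l'espace ambiant, l'indice d'enlacement de $\beta$ autour de $\{x = 0\}$ doit co\"incider avec celui de $\alpha$ (qui est nul, $\alpha$ restant dans une unique composante simplement connexe de $\Omega$)~; combin\'e \`a la simple connexit\'e de $\mathcal{L}$, ceci force les deux extr\'emit\'es \`a co\"incider, r\'eduisant $\alpha$ et $\beta$ \`a des lacets homotopiquement triviaux dans leurs h\^otes simplement connexes. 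Le chemin constant au point de base commun appartient alors \`a $\Omega \cap \mathcal{L}$ et satisfait la conclusion de la D\'efinition~\ref{def_1_connexe}.

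L'obstacle principal sera la saturation~: il s'agit d'exclure les feuilles dont les valeurs de $y$ \`a $x = x_0$ ne descendent jamais sous $\varepsilon$ et dont les graphes $y_i$ \'evitent pr\'ecis\'ement la valeur $y_0$ dans chaque $N_i$. Le couplage entre le taux de d\'ecroissance $k$\textendash{}Gevrey au sommet de chaque secteur et le coefficient dominant de l'holonomie de Dulac contr\^ole le choix admissible de $\varepsilon$, et l'uniformit\'e en $(x_0, y_0)$ requiert une estimation quantitative d\'elicate.
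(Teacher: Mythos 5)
Votre traitement du point~(2) passe � c�t� de l'argument r�el et contient une erreur de fond. L'espace ambiant de la $1$\textendash{}connexit� est ici le polydisque $U\left(\rho,r\right)$ tout entier, qui contient le disque $\left\{ 0\right\} \times r\ww D$ \textemdash{} la Section~\ref{sub:chemins_inamov} insiste pr�cis�ment sur le fait que \og l'enlacement de $\gamma$ autour de $\left\{ x=0\right\} $ n'importe pas \fg{}. L'indice d'enlacement autour de $\left\{ x=0\right\} $ n'est donc pas un invariant d'homotopie dans cet espace (contractile) et ne contraint rien. Votre conclusion \og ceci force les deux extr�mit�s � co�ncider \fg{} est de plus erron�e et vide la D�finition~\ref{def_1_connexe} de son contenu~: celle\textendash{}ci doit justement traiter des chemins tangents $\beta$ � extr�mit�s distinctes $p_{1}\neq p_{2}$ dans $\Omega\cap\mathcal{L}$, et c'est l� que r�side toute la difficult�. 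Tel quel, votre raisonnement \og d�montrerait \fg{} aussi bien la $1$\textendash{}connexit� de la transversale enti�re $\left\{ y=y_{0}\right\} $, en contradiction avec le Corollaire~\ref{cor:transv_pas_1_connexe}. L'argument du texte est tout autre~: l'obstruction est constitu�e des chemins inamovibles (D�finition~\ref{def_inamov}), dont la projection par $\tilde{\Pi}$ doit traverser une bande col enti�re et balaye donc un intervalle de partie imaginaire de longueur sup�rieure � $\frac{\pi}{k}$~; or chaque secteur $\left\{ \sin\left(k\arg x\right)>\nicefrac{1}{2}\right\} $ de $\Omega$ n'a qu'une largeur $\frac{2\pi}{3k}<\frac{\pi}{k}$ en $\arg x$, si bien qu'aucun chemin tangent reliant deux points d'une m�me composante de $\Omega\cap\mathcal{L}$ ne peut �tre inamovible, et est par cons�quent tangentiellement homotope � un chemin de la transversale. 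Cette comparaison quantitative, qui est tout le contenu du corollaire, manque enti�rement � votre proposition. Notez enfin que les feuilles de $\fol{}$ dans $U\left(\rho,r\right)\backslash\left\{ x=0\right\} $ ne sont pas simplement connexes~: la Proposition~\ref{prop:lif_scnx_NC} porte sur les feuilles relev�es dans le rev�tement universel $\tilde{\mathcal{V}}$.

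Pour le point~(1) votre id�e directrice est correcte \textemdash{} les secteurs chevauchent la transition n�ud/col, de sorte que le module de l'ordonn�e d'une feuille y balaye toutes les valeurs tandis que son argument s'enroule ind�finiment, donc la feuille atteint la valeur $y_{0}$ \textemdash{} mais l'\og holonomie de Dulac \fg{} n'a rien � faire ici, et le r�le du disque $\left\{ x_{0}\right\} \times\varepsilon\ww D$ est mal identifi�~: il sert avant tout � saturer la s�paratrice faible $\left\{ y=0\right\} $ elle\textendash{}m�me et les feuilles qui lui sont asymptotiques, lesquelles n'atteignent jamais la hauteur $y_{0}$, et non � r�soudre une dichotomie d�licate n�cessitant une estimation uniforme en $\left(x_{0},y_{0}\right)$.
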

Ce corollaire, montré en Section~\ref{sub:chemins_inamov}, couplé
au fait que la $1$\textendash{}connexité est transitive~\citep[Remarque 1.2.3, p861]{MarMat},
garantit alors que $\mathcal{C}$ est une transversale complètement
connexe. Cela clôt la preuve de l'assertion~(1) du Théorème~C.

\subsection{\label{sub:blocs_adapt_cont}Blocs feuilletés adaptés contrôlés~:
preuve du Théorème~\ref{thm:bloc_adapt}}

Dans cette section nous ne considérons que des feuilletages ayant
une singularité de type
\begin{itemize}
\item non dégénérée donnée par la forme $\omega_{R}$ définie en~\eqref{eq:prepa_non-degenere},
avec $\lambda\notin\ww R_{>0}$,
\item nœud\textendash{}col convergent donné par la forme $\omega_{yR}$
définie en~\eqref{eq:prepa_degenere}.
\end{itemize}
D'après les choix effectués en début de section, la famille de polydisques
\begin{eqnarray*}
\mathcal{U}\left(\rho,r\right) & := & \rho\ww D\times r\ww D
\end{eqnarray*}
fournit une base de voisinages $\mathcal{U}\left(\rho,r\right)$ de
$\left(0,0\right)$, dépendant de deux paramètres $\rho,r>0$ suffisamment
petits, pour laquelle les conditions~(BF3) et~(BC1), ainsi que~(1)
du Théorème~\ref{thm:bloc_adapt}, sont satisfaites. De plus les
conditions (BF1), (BF2), (BC3) et (BC4) sont trivialement vérifiées
dés que~(2) du Théorème~\ref{thm:bloc_adapt} est établi. Les propriétés
suivantes doivent donc être satisfaites pour démontrer~(3)~:
\begin{itemize}
\item (BF4),
\end{itemize}
et pour démontrer~(4)~:
\begin{itemize}
\item (BC3'),
\item (BC4').
\end{itemize}

\subsubsection{Preuve de~(2)}

Soit $\zeta\left(t\right):=z_{*}+2\ii\pi t$ pour $x_{*}=\exp z_{*}$
et $t\in\left[0,1\right]$. Considérons le feuilletage analytique
réel $\fol C$ induit sur le cylindre $C:=\left[0,1\right]\times r\ww D$
par la $1$\textendash{}forme 
\begin{eqnarray*}
\omega & := & \dd y-yG\left(t,y\right)\dd t\,,
\end{eqnarray*}
où 
\begin{eqnarray*}
G\left(t,y\right) & := & 2\ii\pi\frac{1+\tilde{R}\left(\zeta\left(t\right),y\right)}{\tilde{P}\left(\zeta\left(t\right)\right)}\,,
\end{eqnarray*}
avec $P\left(x\right):=\lambda$ pour les singularités dégénérées,
et $P\left(x\right):=x^{k}$ pour les nœuds\textendash{}cols. Celui\textendash{}ci
est régulier, transverse aux fibres de la projection $\Pi\,:\,\left(t,y\right)\mapsto t$.
On identifie canoniquement $\Sigma$ à un sous\textendash{}ensemble
de $r\ww D\simeq\left\{ 0\right\} \times r\ww D$, c'est\textendash{}a\textendash{}dire
l'ensemble des $y_{*}\in r\ww D$ tels que le relevé $t\mapsto y\left(t,y_{*}\right)$
de $\left[0,1\right]\times\left\{ 0\right\} $ dans $\fol C$ en s'appuyant
sur $\left(0,y_{*}\right)$ existe pour tout $t\in\left[0,1\right]$.
L'holonomie $\holo C$ le long de $\fol C$ est holomorphe sur un
voisinage de $\adh{\Sigma}$. Par transversalité $y_{*}\in\Sigma$
si, et seulement si, $\left|y\left(y_{*},t\right)\right|<r$ pour
tout $t\in\left[0,1\right]$.

Clairement $\Sigma$ est ouvert et contient $0$. Si $\Gamma\,:\,\left[0,1\right]\to\Sigma$
est un lacet lisse et simple, délimitant un disque analytique $U\subset r\ww D$,
alors la surface 
\begin{eqnarray*}
T & := & \bigcup_{s\in\left[0,1\right]}y\left(\left[0,1\right],\Gamma\left(s\right)\right)
\end{eqnarray*}
est un tube homéomorphe à $\left[0,1\right]\times\partial U$. Mais
pour tout $y_{*}\in U$ le chemin $t\mapsto y\left(t,y_{*}\right)$
est prisonnier de $T$ et donc $y_{*}\in\Sigma$, ce qui montre que
$\Sigma$ est simplement connexe.

Pour $y_{*}\in\Sigma$ écrivons 
\begin{eqnarray*}
\varphi\left(t,y_{*}\right) & := & \ln\left|y\left(t,y_{*}\right)\right|
\end{eqnarray*}
 de sorte que
\begin{eqnarray*}
\dot{\varphi}\left(t,y_{*}\right) & = & \re{G\left(t,y\left(t,y_{*}\right)\right)}\,.
\end{eqnarray*}
Dès lors, ou bien $y_{*}\in r\ww S^{1}$, ou bien $y_{*}\in\holo C^{-1}\left(r\ww S^{1}\right)$,
ou bien encore il existe $\tau\in]0,1[$ minimal tel que 
\begin{eqnarray*}
\begin{cases}
\varphi\left(\tau,y_{*}\right) & =r\\
\dot{\varphi}\left(\tau,y_{*}\right) & =0
\end{cases} &  & .
\end{eqnarray*}
Ainsi $y_{*}$ est inclus dans le pré\textendash{}image par transport
holonome de l'ensemble analytique réel $Z:=\partial\adh C\cap\left\{ \re F=0\right\} $.
Écartons les cas triviaux, $Z=\partial\adh C$ ou $\#Z<\infty$, pour
étudier la courbe analytique $Z$. Si $\left(\tau_{0},y\left(\tau_{0},y_{*}\right)\right)$
est un point régulier de $Z$ alors $Z$ est localement paramétré
par un arc lisse $\tau\mapsto\left(\tau,y\left(\tau,y_{*}\left(\tau\right)\right)\right)$
résolvant le système pour $y_{*}:=y_{*}\left(\tau\right)$. Dès lors
$\partial\Sigma$ est inclus dans une union finie d'adhérences de
courbes analytiques par morceaux. En particulier $\Sigma$ n'a qu'un
nombre fini de composantes connexes. Il suffit alors de réduire $r$
pour obtenir la connexité de $\Sigma$.

Nous finissons la preuve de~(2) en donnant une estimation de la rugosité
de $\partial\Sigma$. Tout d'abord il est clair que pour tout $y_{*}\in\Sigma$
et $t\in\left[0,1\right]$ on a la relation 
\begin{eqnarray}
y\left(t,y_{*}\right) & = & y_{*}\exp\left(-F\left(t,y_{*}\right)\right)\label{eq:sol_implicite}
\end{eqnarray}
où
\begin{eqnarray*}
F\left(t,y_{*}\right) & := & \int_{0}^{t}G\left(u,y\left(u,y_{*}\right)\right)\dd u\,.
\end{eqnarray*}
La formule suivante est une conséquence immédiate des définitions~:
\begin{lem}
\label{lem:frml_rugo_holo}Pour un chemin $\chi\,:\,\left[0,1\right]\to\adh{\Sigma}$
et pour tout $s\in\left[0,1\right]$ on a
\begin{eqnarray*}
\mathbf{e}\left(\holo C\circ\chi,s\right) & = & \mathbf{e}\left(\chi,s\right)+\left\{ \left\{ \arg\left(1-\chi\left(s\right)\ppp{y_{*}}F\left(1,\chi\left(s\right)\right)\right)\right\} \right\} \,.
\end{eqnarray*}

\end{lem}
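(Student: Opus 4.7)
The plan is to reduce the identity to a straightforward logarithmic differentiation of the explicit form of the holonomy given by~\eqref{eq:sol_implicite} evaluated at $t=1$. Set $\Psi:=\holo C$, so that $\Psi(y_{*})=y_{*}\exp\bigl(-F(1,y_{*})\bigr)$ on a neighborhood of $\adh{\Sigma}$. Differentiating and dividing by $\Psi(y_{*})$ yields the logarithmic derivative
\begin{eqnarray*}
\frac{\Psi'(y_{*})}{\Psi(y_{*})} & = & \frac{1}{y_{*}}-\ppp{y_{*}}F(1,y_{*})\,,
\end{eqnarray*}
which is holomorphic on $\adh{\Sigma}$ since $\Psi$ does not vanish there.

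Next I would apply the chain rule to $\Psi\circ\chi$ at a smooth parameter $s\in[0,1]$. Multiplying numerator and denominator by $\chi(s)/\chi(s)$ yields
\begin{eqnarray*}
\frac{(\Psi\circ\chi)'(s)}{\ii\,(\Psi\circ\chi)(s)} & = & \frac{\chi'(s)}{\ii\,\chi(s)}\cdot\chi(s)\cdot\frac{\Psi'(\chi(s))}{\Psi(\chi(s))}\\
 & = & \frac{\chi'(s)}{\ii\,\chi(s)}\cdot\Bigl(1-\chi(s)\,\ppp{y_{*}}F\bigl(1,\chi(s)\bigr)\Bigr)\,.
\end{eqnarray*}
The identity is therefore reduced to the additivity, modulo $2\pi\ww Z$, of the argument of a product: taking $\arg$ of both sides gives the corresponding sum of arguments. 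Applying the operator $\left\{\left\{\cdot\right\}\right\}$ yields the formula once one observes that, in the regime where both summands lie in the principal strip $]-\nf{\pi}{2},\nf{\pi}{2}[$, this operator is additive; if either summand already falls outside that strip then the left\textendash hand side is $\infty$ by the same choice of representative, so the identity remains valid in $[0,\infty]$.

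Finally, at a non\textendash smooth parameter $s$, both $\mathbf{e}(\chi;\cdot)$ and $\mathbf{e}(\holo C\circ\chi;\cdot)$ are, by definition, the maximum of their left and right limits. Since composition with the holomorphic diffeomorphism $\Psi$ preserves one\textendash sided limits of $\chi'$, the one\textendash sided versions of the identity above transfer term by term to the maxima, concluding the proof. The only delicate point is bookkeeping with the principal representatives of the argument, which is an elementary verification rather than a genuine obstacle; the substance of the lemma is the explicit factorisation of $(\Psi\circ\chi)'/(\ii\Psi\circ\chi)$ derived above.
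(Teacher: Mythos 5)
Your argument is essentially the one the paper intends: the paper offers no proof beyond declaring the formula an immediate consequence of the definitions, and that consequence is exactly your factorisation $\frac{(\holo C\circ\chi)'}{\ii\,(\holo C\circ\chi)}=\frac{\chi'}{\ii\chi}\,\bigl(1-\chi\,\ppp{y_{*}}F\left(1,\chi\right)\bigr)$, obtained from $\holo C\left(y_{*}\right)=y_{*}\exp\left(-F\left(1,y_{*}\right)\right)$ (equation~\eqref{eq:sol_implicite} at $t=1$) together with the chain rule. The one point to correct is your claim that $\left\{ \left\{ \cdot\right\} \right\} $ is additive whenever both summands have principal representatives in $]-\nf{\pi}{2},\nf{\pi}{2}[$: because of the absolute value in D�finition~\ref{def_rugo} this fails when the representatives have opposite signs (representatives $0.3$ and $-0.3$ give $0$ on the left and $0.6$ on the right), so what the factorisation literally yields is additivity of $\arg$ modulo $2\pi$, and the displayed equality of roughnesses is exact only when the two representatives share a sign or one of them vanishes. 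That looseness is already present in the statement of the lemma itself, and it is harmless where the lemma is used, since there $\chi$ parametrises an arc of $r\ww S^{1}$, for which $\mathbf{e}\left(\chi,s\right)=0$ and the identity is trivially exact. Your treatment of non-smooth parameters via the larger of the two one-sided limits is consistent with the definition of $\mathbf{e}$.
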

On calcule aisément
\begin{eqnarray*}
\ppp{y_{*}}F\left(1,y_{*}\right) & = & 2\ii\pi\int_{0}^{1}\ppp{y_{*}}y\left(t,y_{*}\right)\ppp y{\tilde{R}}\left(\zeta\left(t\right),y\left(t,y_{*}\right)\right)\frac{\dd t}{P\left(\zeta\left(t\right)\right)}\,.
\end{eqnarray*}
Puisque 
\begin{eqnarray*}
A\left(x,y\right) & := & \frac{\ppp yR\left(x,y\right)}{P\left(x\right)}
\end{eqnarray*}
est holomorphe sur $\mathcal{U}\left(\rho,r\right)$, on pose
\begin{eqnarray*}
D_{\rho} & := & \sup_{C}\left|\ppp{y_{*}}y\right|\\
M & := & \sup_{\mathcal{U}\left(\rho,r\right)}\left|A\right|
\end{eqnarray*}
 de sorte que
\begin{eqnarray*}
\left|\ppp{y_{*}}F\left(1,y_{*}\right)\right| & \leq & 2\pi MD_{\rho}\,.
\end{eqnarray*}
Ainsi pour $\rho_{0}\geq\rho>0$ fixé et $r_{0}\geq r>0$ assez petit
on a 
\begin{eqnarray*}
\left|\left\{ \left\{ \arg\left(1-y_{*}\ppp{y_{*}}F\left(1,y_{*}\right)\right)\right\} \right\} \right| & \leq & \arcsin\left(2\pi MD_{\zeta}r\right)\leq\pi^{2}MD_{\rho}r\,.
\end{eqnarray*}
En considérant $\holo C^{\circ-1}$, et $\chi$ une paramétrisation
d'un arc de $r\ww S^{1}$, on majore la rugosité de $\partial\Sigma\cap\holo C^{-1}\left(r\ww S^{1}\right)$
par $\pi^{2}MD_{\rho}r$. Les autres points du bord se traitent de
manière similaire.

\subsubsection{Preuve de (3) dans le cas non dégénéré}

Rappelons que 
\begin{eqnarray*}
\delta & := & \arccos\sup\left|R\left(\mathcal{U}\left(\rho,r\right)\right)\right|
\end{eqnarray*}
et reprenons les notions introduites en Section~\eqref{sec:Incompress_ND}.
D'après le Lemme~\eqref{lem:stability_beam_ND} on peut garantir
qu'un rayon $z_{\theta}\left(\ww R_{\geq0}\right)$ d'un faisceau
de stabilité $\beam$ coupe $\left\{ \re z=\ln\rho\right\} $, en
choisissant $\theta$ de sorte que $\re{-\lambda\theta}>0$ et $\left|\arg\theta\right|<\delta$.
Cela se produit dès que $\re{\lambda}\leq0$ ou, dans le cas contraire,
$\cos\text{arg}\theta<\sin\left|\arg\lambda\right|$. Ainsi la trace
de chaque feuille sur $\rho\ww S^{1}\times r\ww D$ est un connexe
non vide, ce qui entraîne la $1$\textendash{}connexité de $\rho\ww S^{1}\times r\ww D$
dans $\adh{\mathcal{U}\left(\rho,r\right)}$.

Soit $B:=\tx{Susp}_{\rho\ww S^{1}}\left(\Sigma\right)\subset\rho\ww S^{1}\times r\ww D$.
L'obstruction à la $1$\textendash{}connexité de $B$ dans $B\cup\mathcal{U}\left(\rho,r\right)$
provient de l'existence d'une feuille $\lif{}$ telle que $\adh{\lif{}}\cap B$
ne soit pas connexe. Mais ceci ne peut se produire puisque $\Sigma$
est maximal pour la suspension de points de $\left\{ x_{*}\right\} \times r\ww D$,
donnant la propriété~(BF4).

\subsubsection{Preuve de (3) dans le cas des nœuds\textendash{}cols}

Nous raisonnons exactement comme précédemment, la difficulté ici encore
provient de l'absence de direction fixe incluse dans les faisceaux
de stabilité. 
\begin{defn}
\label{def_bandes_NC}On se donne $\varepsilon\in]0,1[$. On définit
les \textbf{régions nœuds }et\textbf{ cols} respectivement par
\begin{eqnarray*}
N_{\varepsilon} & := & \left\{ \cos\left(kz\right)\geq\varepsilon\,:\,\re z\leq\ln\rho\right\} \\
C_{\varepsilon} & := & \left\{ \cos\left(kz\right)<\varepsilon\,:\,\re z\leq\ln\rho\right\} \,.
\end{eqnarray*}
Chacune de ces région s'écrit comme une partition dénombrable de \textbf{bandes}
$\left(N_{\varepsilon}^{\ell}\right)_{\ell\in\ww Z}$ et $\left(C_{\varepsilon}^{\ell}\right)_{\ell\in\ww Z}$
correspondant aux déterminations de $\arg\left(x^{k}\right)$. 
\end{defn}
Nous prouvons maintenant le
\begin{lem}
\label{lem:1_cnx_NC}Soit $\tilde{\lif{}}$ une feuille telle que
$L:=\tilde{\Pi}\left(\adh{\tilde{\lif{}}}\right)\cap\left\{ \re z=\ln\rho\right\} $
coupe deux bandes cols distinctes, disons en des points $z_{*}$ et
$z^{*}$. Alors $\left[z_{*},z^{*}\right]\subset L$.\end{lem}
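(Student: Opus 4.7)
The plan is to show that $L\cap[z_*,z^*]$ is both open and closed in $[z_*,z^*]$, which suffices since this segment is connected and contains the non-empty set $\{z_*,z^*\}\subset L$. Closedness is automatic because $\adh{\tilde{\lif{}}}$ is closed and $\tilde\Pi$ is continuous. The task thus reduces to proving local openness at every $z_{0}\in L\cap[z_*,z^*]$. The main technical tool is the stability beam construction of Section~\ref{sec:Incompress_NC}, which I would use to propagate the property $z\in L$ along the vertical line $\{\re z=\ln\rho\}$.

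For $z_{0}$ in a col band, I would choose leaf points $(z_{n},y_{n})\in\tilde{\lif{}}$ with $\tilde\Pi(z_{n},y_{n})\to z_{0}$, and apply a stability beam $\beam[z_{n}][\delta]$: each ray $z_{\theta}(\cdot)$ with $|\arg\theta|<\delta$ hits $\{\re z=\ln\rho\}$ at an endpoint $z_{b}(n,\theta)$, and $|y_{\theta}|$ decreases along these rays, so the whole beam lifts into $\tilde{\lif{}}$. The difficulty is that the arc $\theta\mapsto z_{b}(n,\theta)$ shrinks to $\{z_{0}\}$ as $z_{n}\to z_{0}$. To remedy this, I would first step backwards along a beam (controlled by the smallness of $|y_{n}|$) to reach an interior leaf point $(z'_{n},y'_{n})$ at bounded distance from the right boundary; the forward stability beam from $(z'_{n},y'_{n})$ then sweeps a macroscopic arc around $z_{0}$, giving an open neighborhood of $z_{0}$ in $L$.

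The delicate case is $z_{0}\in L$ in a node band between the col bands containing $z_*$ and $z^*$. Stability beams there point away from $\{\re z=\ln\rho\}$, so the argument above fails. Here I would exploit the flat asymptotics $y\sim\exp(-1/(kx^{k}))$ of leaves in node sectors, ensuring that $|y|$ stays uniformly small across the node band, and obtain openness by integrating the transport equation from~\eqref{eq:NC_lift} along vertical paths on $\{\re z=\ln\rho\}$. Once this node-band openness is established, iterating over the successive bands encountered between $z_*$ and $z^*$ yields $[z_*,z^*]\subset L$. This node-band transition is the main obstacle of the proof: stability beams are not adapted to it, and one must extract the relevant continuity from the underlying flatness of the leaf coordinate, plausibly via the Gevrey/sectorial apparatus introduced earlier for the divergent case.
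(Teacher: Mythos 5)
Your reduction to a clopen argument cannot be completed: it requires two\textendash{}sided local openness of $L$ at every point of $L\cap\left[z_{*},z^{*}\right]$, and the only role your sketch assigns to the hypothesis that $z_{*}$ and $z^{*}$ lie in \emph{distinct} col bands is to make this set non\textendash{}empty. A local openness statement valid at every point of $L$ would force $L$ to be open and closed in the line $\left\{ \re z=\ln\rho\right\} $, hence equal to it, which is false: already for the model leaf $y\left(z\right)=c\exp\left(-\exp\left(-kz\right)/k\right)$ one computes that $L$ is essentially $\left\{ \cos\left(k\im z\right)\geq-k\rho^{k}\ln\left(r/\left|c\right|\right)\right\} $, a disjoint union of proper closed intervals centred on the node bands whose endpoints sit in the interiors of the col bands. (That $L$ is in general disconnected is exactly what drives the chemins inamovibles of Section~\ref{sub:chemins_inamov}.) So openness genuinely fails at the endpoints of the components of $L$, and nothing in your argument excludes such an endpoint from $\left(z_{*},z^{*}\right)$ \textendash{} excluding it \emph{is} the lemma. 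Your local mechanisms do not repair this. At a component endpoint the only witnesses have $\left|y\right|=r$, so there is no room to \og step backwards along a beam \fg{} (backward lifting increases $\left|y\right|$ and exits through $\left\{ \left|y\right|=r\right\} $ at once) and the forward beam from a point of the boundary line sweeps a degenerate arc. In a node band, the asymptotics $y\sim\exp\left(-1/\left(kx^{k}\right)\right)$ concern the limit $x\to0$ and say nothing on $\left\{ \left|x\right|=\rho\right\} $; moreover the vertical transport equation gives $\dot{\varphi}=\rho^{-k}\sin\left(k\im z\right)+O\left(C_{\rho}\right)$ with $C_{\rho}=\sup\left|x^{-k}R\right|$, whose principal term changes sign inside the node band, so no local monotonicity is available there.

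The paper's proof is global precisely where yours is local. The union of the stability beams attached to a tangent path joining $z_{*}$ to $z^{*}$ shows that the complement of $L$ in the boundary line lies in the node region, reducing the statement to the case where $z_{*}$ and $z^{*}$ are the two edges $\ln\rho\mp\ii\frac{\pi}{2k}$ of a single node band; one then integrates the transport equation \emph{starting from the col edge} across half the node band: $\dot{\varphi}\left(t\right)\leq-\rho^{-k}\cos\left(kt\right)+C_{\rho}$, whence $\varphi\left(t\right)-\ln r\leq-\frac{\rho^{-k}}{k}\sin\left(kt\right)+C_{\rho}t\leq\left(C_{\rho}-\frac{2}{\pi\rho^{k}}\right)t\leq0$ once $\rho$ is small enough that $C_{\rho}<\frac{2}{\pi\rho^{k}}$, which is possible because $R\in x^{k}\germ{x,y}$. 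The decisive point is the decrease accumulated at the very start, where $\cos\left(kt\right)=1$: the inequality is integrated over the whole half\textendash{}band and anchored at a col\textendash{}band point of $L$, and it cannot be recovered by patching together local openness statements.
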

\begin{proof}
Soit $\gamma\,:\,\left[0,1\right]\to\tilde{\Pi}\left(\adh{\tilde{\mathcal{L}}}\right)$
la projection d'un chemin tangent avec $\gamma\left(0\right)=z_{*}$
et $\gamma\left(1\right)=z^{*}$. On considère l'union de faisceaux
de stabilité
\begin{eqnarray*}
S & := & \bigcup_{t\in\left[0,1\right]}\beam[\gamma\left(t\right)]
\end{eqnarray*}
 qui est incluse dans $\tilde{\Pi}\left(\tilde{\mathcal{L}}\right)$.
Les trajectoires $z_{1}$ issues de $S\cap C_{\varepsilon}$ intersectent
$\left\{ \re z=\ln\rho\right\} $. En prenant $\varepsilon:=\sin\delta$
on obtient même l'inclusion $\left\{ \re z=\ln\rho\right\} \backslash L\subset N_{\varepsilon}$.
Dès lors il suffit de traiter le cas $z_{*}:=\ln\rho-\ii\frac{\pi}{2k}$
et $z^{*}:=\ln\rho+\ii\frac{\pi}{2k}$.

On reprend l'équation variationnelle de $\varphi:=\ln\left|y\right|$
intégrant le feuilletage au\textendash{}dessus du chemin $t\in\left[0,\frac{\pi}{2k}\right]\mapsto z\left(t\right):=z_{*}+\ii t$.
On obtient
\begin{eqnarray*}
\dot{\varphi}\left(t\right) & = & \re{\ii\exp\left(-kz\left(t\right)\right)\left(1+\tilde{R}\left(z\left(t\right),y\left(t\right)\right)\right)}\\
 & = & -\rho^{-k}\cos\left(kt\right)+\im{\exp\left(-kz\left(t\right)\right)\tilde{R}\left(z\left(t,y\left(t\right)\right)\right)}\,.
\end{eqnarray*}
Notons 
\begin{eqnarray*}
C_{\rho} & := & \sup_{\mathcal{U}\left(\rho,r\right)}\left|x^{-k}R\left(x,y\right)\right|
\end{eqnarray*}
de sorte que
\begin{eqnarray*}
\dot{\varphi}\left(t\right) & \leq & -\rho^{-k}\cos\left(kt\right)+C_{\rho}
\end{eqnarray*}
 puis
\begin{eqnarray*}
\varphi\left(t\right)-\ln r & \leq & -\frac{\rho^{-k}}{k}\sin\left(kt\right)+C_{\rho}t\\
 & \leq & \left(C_{\rho}-\frac{2}{\pi\rho^{k}}\right)t\,.
\end{eqnarray*}
Puisque $R\in x^{k}\germ{x,y}$ il est possible de réaliser l'inégalité
\begin{eqnarray*}
C_{\rho} & < & \frac{2}{\pi\rho^{k}}
\end{eqnarray*}
quitte à prendre $\rho$ assez petit. Ainsi $\varphi\left(t\right)\leq\ln r$
pour tout $t\in\left[0,\frac{\pi}{2k}\right]$. En raisonnant de même
pour le chemin $t\in\left[0,\frac{\pi}{2k}\right]\mapsto z\left(t\right):=z^{*}-\ii t$
on montre finalement $\left[z_{*},z^{*}\right]\subset L$.
\end{proof}
Il se peut que le bord $\rho\ww S^{1}\times r\ww D$ ne soit pas $1$\textendash{}connexe
dans $\mathcal{U}\left(\rho,r\right)$, mais les chemins mettant en
défaut cette propriété doivent avoir leurs extrémités dans des arcs
nœuds, et l'intersection du bord d'une feuille contenant un tel chemin
avec $\rho\ww S^{1}\times r\ww D$ ne doit pas rencontrer plus d'une
bande col dans son revêtement universel. En particulier $\tx{Susp}_{\rho\ww S^{1}}\left(\Sigma\right)$
est $1$\textendash{}connexe dans $\adh{\mathcal{U}\left(\rho,r\right)}$,
puisque il y a $k+1$ bandes cols croisant une région $\left\{ 0\leq\im{z-z_{*}}\leq2\pi\right\} $,
donnant~(BF4).

\subsubsection{Preuve de (4)~: rabotage}

Puisque chaque feuille attachée à un ensemble de type suspension au\textendash{}dessus
de $\rho\ww S^{1}\times\left\{ 0\right\} $ se rétracte tangentiellement
sur le bord $\rho\ww S^{1}\times r\ww D$, le processus de rabotage
de Mar\'in\textendash{}Mattei~\citep[Section 4.3, p877]{MarMat}
permet de montrer l'existence d'un sous\textendash{}ensemble de type
suspension 1\textendash{}connexe dans un autre, donné à l'avance dans
la composante d'entrée. Le contrôle sur la rugosité est alors garanti
par l'estimation~(2).

\subsection{\label{sub:chemins_inamov}Chemins inamovibles}

Nous montrons la Proposition~\ref{prop:bord_NC_pas_1_connexe} lorsque
$\mathcal{U}\left(\rho,r\right)\subset\mathcal{V}\subset\adh{\mathcal{U}\left(\rho,r\right)}$.
En effet si $\varphi\,:\,\overline{\ww D}\to D$ est une représentation
conforme de $\tx{int}\left(D\right)$, fixant $0$, alors le changement
de coordonnées $\phi\,:\,\left(x,y\right)\mapsto\left(x,\varphi\left(y\right)\right)$
transforme $\mathcal{V}\cup V$ en un tel voisinage et $\phi^{*}\omega_{R}$
est encore sous la forme~\eqref{eq:prepa_degenere}.

On a $V\subset\rho\ww D\times r\ww S^{1}$ et les feuilles $\lif{}$
dont l'adhérence coupe $V$ en un point $\left(x_{*},y_{*}\right)$
avec $x_{*}^{k}<0$ (dans une partie col) coupent également le bord
$\rho\ww S^{1}\times r\ww D$ (il suffit de suivre dans $\tilde{\lif{}}$
le relevé du chemin $z_{1}$ issu de $\log x_{*}$). Dès lors $\tilde{\Pi}\left(\adh{\tilde{\lif{}}}\cap\left\{ \left|y\right|=r\right\} \right)$
n'est pas connexe, de sorte que tout chemin $\gamma$ dans $\lif{}$,
tel que $\tilde{\gamma}$ relie deux composantes distinctes, n'est
pas tangentiellement homotope à un chemin dans $V$. Nous renvoyons
à la Figure~\ref{fig:pas_1-connexe}. Pour mettre en défaut la $1$\textendash{}connexité
de $V$ dans $\mathcal{V}\backslash\mathcal{S}$ il faut montrer que
$\gamma$ est homotope dans $\mathcal{V}\backslash\mathcal{S}$ à
un chemin dans $V$. Le point clef ici est que $\mathcal{V}\backslash\mathcal{S}$
contient le disque $\left\{ 0\right\} \times r\ww D$, c'est\textendash{}à\textendash{}dire
que l'enlacement de $\gamma$ autour de $\left\{ x=0\right\} $ n'importe
pas. Il reste donc à contrôler l'indice de $\gamma$ autour de $\mathcal{S}$
dans le cas convergent. Le Corollaire~\ref{cor:transv_pas_1_connexe}
découlera du fait que l'on peut choisir les extrémités $\tilde{p}$
et $\tilde{q}$ de $\tilde{\gamma}$ dans une même transversale $\left\{ y=\tx{cst}\right\} $,
comme l'indiquera la construction.
\begin{defn}
\label{def_inamov}Un chemin tangent $\Gamma$, d'extrémité dans une
même transversale $\left\{ y=\tx{cst}\right\} $, dont le relevé $\tilde{\Gamma}$
relie deux composantes connexes distinctes de $\adh{\tilde{\lif{}}}\cap\left\{ \left|y\right|=r\right\} $,
sera qualifié d'\textbf{inamovible}.
\end{defn}
Le Corollaire~\ref{cor:TCC} est quant à lui une conséquence du fait
que pour tout chemin inamovible $\Gamma$ l'intervalle $\im{\tilde{\Pi}\left(\tilde{\Gamma}\left(\left[0,1\right]\right)\right)}$
a une longueur supérieure à $\frac{\pi}{k}$.

\begin{figure}[H]
\hfill{}\includegraphics[width=6cm]{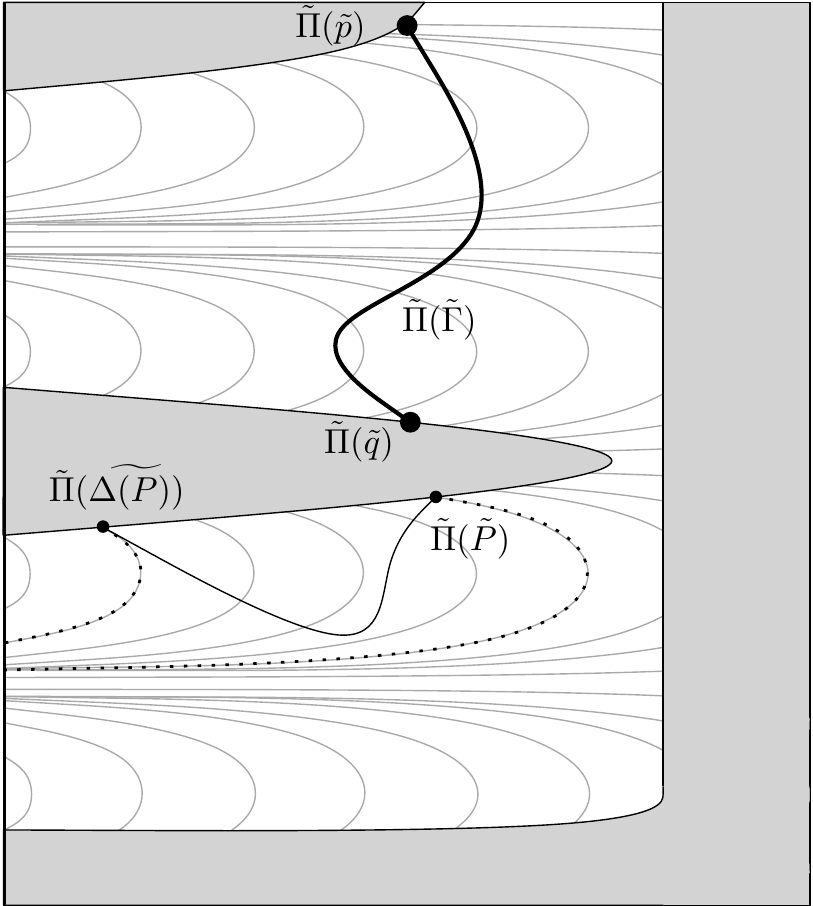}\hfill{}\includegraphics[width=7cm]{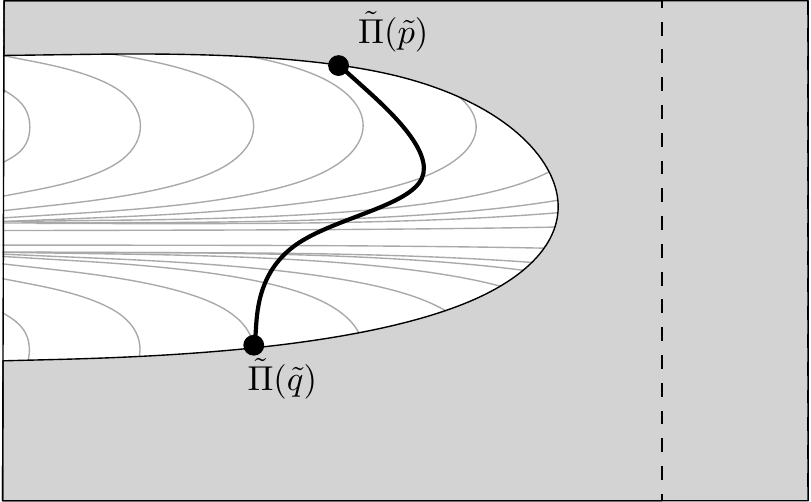}\hfill{}

\caption{\label{fig:pas_1-connexe}La projection en coordonnées logarithmiques
d'un chemin inamovible $\Gamma$ (en gras et en haut, figure de gauche)
à comparer avec la projection d'un chemin tangent portant l'holonomie
forte $\Delta$ (en bas). Les courbes grises correspondent aux courbes
d'iso\textendash{}argument de l'ordonnée de la feuille, celles représentées
en pointillés correspondant à des valeurs différant de $2\pi$. La
figure de droite présente un chemin qui n'est pas inamovible ($x_{*}^{k}>0$
assez petit).}
\end{figure}

\subsubsection{Le cas du modèle formel }

Commençons par prouver le résultat pour le modèle $\left(k,0\right)$
(on renvoie également à la Section~\ref{sub:ex_cvg} pour un traitement
plus détaillé de ce cas là). Les courbes $z_{1}$ définies en~\eqref{eq:iso_arg}
sont les courbes d'iso\textendash{}argument des ordonnées des feuilles
du modèle
\begin{eqnarray*}
y\left(z\right) & = & c\exp\left(-\nf{\exp\left(-kz\right)}k\right)\,\,\,\,\,,\, c\in\ww C\,.
\end{eqnarray*}
Au contraire les courbes $z_{\ii}$ sont les courbes d'iso\textendash{}module
de ces fonctions. Étant donné $y_{*}\in r\ww S^{1}$ on définit $z_{*}:=\ln\nf{\rho}2+\ii\nf{\pi}k$.
Il existe $c\in\ww C_{\neq0}$ tel que $y\left(z_{0}\right)=y_{0}$.
Le long du chemin $z\,:\, t\in\left[-\nf{\pi}k,\nf{\pi}k\right]\mapsto\ln\nf{\rho}2-\ii t$
on a 
\begin{eqnarray*}
\arg y\left(z\left(t\right)\right) & = & \arg c-\frac{2^{k}}{k\rho^{k}}\sin\left(kt\right)\\
\left|y\left(z\left(t\right)\right)\right| & = & \left|c\right|\exp\left(-\frac{2^{k}}{k\rho^{k}}\cos\left(kt\right)\right)\,.
\end{eqnarray*}
Ainsi $\Gamma:=\left(\exp\circ z,y\right)$ est un chemin tangent
reliant $p:=\left(\exp z_{*},y_{*}\right)$ à un autre point $q:=\left(\exp\left(z_{*}+\nf{2\ii\pi}k\right),y_{*}\right)$
dans la même transversale $\Sigma:=\left\{ y=y_{*}\right\} $. Celui\textendash{}ci
n'est tangentiellement homotope à aucun chemin de $\Sigma$. Il est
pourtant homotope dans $\adh{\mathcal{U}}\backslash\mathcal{S}$ à
un chemin reliant $p$ à $q$ dans $\Sigma$, puisque la variation
d'argument de $y$ est nulle.

On passe du modèle $\left(k,0\right)$ au modèle $\left(k,\mu\right)$
par le changement de coordonnées 
\begin{eqnarray*}
z & \longmapsto & z-\frac{1}{k}\log\left(1-\mu kz\exp\left(kz\right)\right)\,.
\end{eqnarray*}
Celui\textendash{}ci est un biholomorphisme (pourvu que $\rho$ soit
assez petit) et donc le résultat persiste pour tous les modèles formels.

\subsubsection{Le cas convergent}

La variation de $\alpha\left(t\right):=\arg y\left(t\right)$ au\textendash{}dessus
d'un chemin $t\in\ww R\mapsto z\left(t\right)$ qui paramètre une
composante connexe de $L:=\adh{\tilde{\lif{}}}\cap\left\{ \left|y\right|=r\right\} $
avec $\left|\dot{z}\right|=1$ est donnée par
\begin{eqnarray*}
\dot{\alpha}\left(t\right) & = & \pm1\,,
\end{eqnarray*}
le signe dépendant du sens de parcours. Dès lors si $\adh{\tilde{\lif{}}}\cap\left\{ \re z=\ln\rho\right\} $
rencontre au moins deux bandes nœuds et si le chemin $z$ paramètre
la composante du bord présente dans une bande col entre les bandes
nœuds alors le chemin $z$ est défini sur $\ww R$ et $\alpha\left(\ww R\right)=\ww R$.
Ainsi on peut relier par un chemin tangent $\Gamma$ n'importe quel
point $\left(z_{*},y_{*}\right)$ d'une autre composante de $L$ à
un unique $\left(z\left(t\right),y\left(t\right)\right)$ avec $\left|y\left(t\right)\right|=r$
et $\arg y\left(t\right)=\arg y_{*}$. Cette dernière égalité montre
que $\Gamma$ est homotope à un chemin de $\left\{ y=y_{*}\right\} $
dans $\mathcal{U}\left(\rho,r\right)\backslash\left\{ y=0\right\} $.

\subsubsection{Le cas divergent}

Traitons finalement le cas des nœuds\textendash{}cols divergents.
On fait apparaître les resommées sectorielles des séparatrices faibles
par $\tau_{j}\,:\,\left(z,y\right)\mapsto\left(z,y-s_{j}\left(\exp z\right)\right)$
sur les bandes 
\begin{eqnarray*}
 & \left\{ \left|\im z-\nf{\ii\pi\left(2j+1\right)}k\right|<\nf{\pi}k+\beta\right\}  & \,,
\end{eqnarray*}
pour se ramener au cas convergent sur une union d'une bande col et
de deux bandes nœuds adjacentes. L'application de $\tau_{j}$ revient
à ajouter à la constante d'intégration $c$ d'un secteur un coefficient
de Stokes $\chi_{j}\in\ww C$ (qui provient de la partie translation
de l'invariant de Martinet\textendash{}Ramis~\citep{MaRa-SN}) ce
qui permet de recoller la feuille du secteur consécutif correspondant
à la constante d'intégration $c+\chi_{j}$. En modifiant l'extrémité
d'arrivée du chemin $\Gamma$, par exemple en la prenant à une affixe
de partie réelle plus négative, on peut encore réaliser la construction
d'un chemin inamovible.

\subsection{\label{sub:coins}Passage des coins}

~

Si on veut appliquer la méthode de Mar\'in\textendash{}Mattei pour
\og passer un coin \fg{} associé à une singularité non\textendash{}linéarisable
$s\in K_{\alpha}$ alors il faut prendre un autre type de blocs $\mathcal{B}_{\alpha}$.
Ceux\textendash{}ci s'obtiennent en poussant la composante d'entrée
du bord par le champ de vecteurs radial $x\ppp x{}$. Lorsque $\lambda<0$
cette opération fournit un voisinage épointé des séparatrices (feuilles
de type «collier»). L'opération de rabotage~\citep[Section 4.3, p877]{MarMat}
s'effectue alors de façon similaire et la composante «de sortie» aura
une rugosité contrôlée par l'application de Dulac $\mathcal{D}_{s}$
associée. Nous ne rentrerons pas dans les détails, nous indiquons
seulement que ce contrôle découle de la majoration du reste du développement
asymptotique de $\mathcal{D}_{s}$ se trouvant par exemple dans~\citep[Theorem 24.38, p462]{IlYako},
à travers la Proposition~4.2.3 de~\citep[p877]{MarMat} (plus précisément
le Lemme~4.4.3). 
\begin{rem}
La preuve de la Proposition~4.2.3 de~\citep[p877]{MarMat} invoque
la Proposition~1.2.4~\citep[p861]{MarMat}. Cependant les hypothèses
de cette proposition ne sont pas vérifiées pour certaines selles quasi\textendash{}résonantes.
En effet J.\textendash{}C.~\noun{Yoccoz} a montré que certaines d'entre
elles contiennent des familles de feuilles fermées (pas simplement
connexes) accumulant la singularité~\citep{Ricard}. Néanmoins un
examen légèrement plus poussé montre que l'on peut se passer d'invoquer
cette proposition sauf dans le cas de singularités linéarisables.
\end{rem}

\section{\label{sec:Exemples}Exemples de feuilletages compressibles}

\subsection{\label{sub:Synthese}Synthèse}

Nous aurons besoin d'une version particulière du théorème de réalisation
de \noun{A.~Lins\textendash{}Neto,} afin d'obtenir des feuilletages
réduits après un éclatement, possédant en chacune de ses singularités
réduites un type analytique local prescrit. Cette version a été écrite
par F.~Loray dans~\citep{Lolo}. Plutôt que de donner l'énoncé général,
nous nous bornons à exposer une version adaptée à notre contexte.
Pour réaliser des feuilletages de classe analytique et d'holonomie
prescrites placés dans sa réduction, il faut satisfaire aux formules
d'indices de Camacho\textendash{}Sad (Section~\ref{sub:Camacho-Sad}). 
\begin{thm}
\label{thm:synthese}\emph{\citep[p159]{Lolo}} On se donne un groupe
de type fini $G:=\left\langle \Delta_{0},\cdots,\Delta_{n}\right\rangle <\diff{\ww C,0}$,
avec $n\in\ww N_{>0}$, tel que $\bigcirc_{\ell=0}^{n}\Delta_{\ell}=\id$.
On se donne également une collection de feuilletages holomorphes singuliers
réduits $\fol{\ell}$, chacun accompagné d'un germe de séparatrice
$S_{\ell}$, de sorte que, dans une bonne coordonnée locale, l'holonomie
de $\fol{\ell}$ le long de $S_{\ell}$ soit précisément $\Delta_{\ell}$.
Supposons finalement que la relation 
\begin{align*}
\sum_{\ell=0}^{n}\csad[\fol{\ell}][S_{\ell}][p_{\ell}] & =-1\,.\tag{{\tt CS}}
\end{align*}
est satisfaite. Alors il existe un germe de feuilletage singulier
$\fol{}$ du plan complexe, non dicritique, qui satisfait les conclusions
suivantes:
\begin{enumerate}
\item $\fol{}$ est réduit après un éclatement et possède $n+1$ points
singuliers $\left(p_{\ell}\right)_{\ell\leq n}$ sur le diviseur exceptionnel
$\mathcal{D}\simeq\ww P_{1}\left(\ww C\right)$,
\item il existe un germe de transversale $\Sigma$ attachée à $\mathcal{D}$
en un point régulier $p$ tel que la représentation d'holonomie projective
$\pi_{1}\left(\mathcal{D}\backslash\left\{ p_{\ell}\,:\,0\leq\ell\leq n\right\} ,p\right)\to\diff{\Sigma,p}$
coïncide avec $G$. Plus précisément, l'image d'un générateur $\gamma_{\ell}$
du groupe fondamental partant de $p$, d'indice $1$ autour de $p_{\ell}$
et d'indice nul autour des autres singularités, est $\Delta_{\ell}$,
\item le type analytique local au voisinage de la singularité $p_{\ell}$
est $\fol{\ell}$,
\item la séparatrice $S_{\ell}$ est incluse dans une composante du diviseur
exceptionnel $\mathcal{D}$.
\end{enumerate}
\end{thm}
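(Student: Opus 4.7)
The plan is to realize the prescribed projective holonomy on a punctured projective line, glue in the given local analytic models at the punctures, and contract the resulting divisor to a point via Grauert's theorem, using the Camacho--Sad relation~({\tt CS}) to guarantee the correct self-intersection.

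First, I would construct a germ of complex surface $\mathcal{W}$ containing a divisor $\mathcal{D}\simeq\ww P_{1}\left(\ww C\right)$ together with a singular holomorphic foliation $\hat{\fol{}}$ whose singular set on $\mathcal{D}$ is exactly $\left\{ p_{0},\ldots,p_{n}\right\} $. On $\mathcal{D}\setminus\left\{ p_{0},\ldots,p_{n}\right\} $, take the quotient of the trivial disc bundle over the universal cover by the monodromy representation $\gamma_{\ell}\mapsto\Delta_{\ell}$. The hypothesis $\bigcirc_{\ell=0}^{n}\Delta_{\ell}=\id$ ensures the representation descends to $\pi_{1}\left(\mathcal{D}\setminus\left\{ p_{0},\ldots,p_{n}\right\} ,p\right)$ and hence gives a holomorphic foliation $\fol{\tx{susp}}$ transverse to the fibres, with prescribed projective holonomy.

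Next, I would extend $\fol{\tx{susp}}$ across each puncture by gluing in $\fol{\ell}$. Choose a small disc neighborhood $\mathcal{U}_{\ell}$ of $p_{\ell}$ in $\mathcal{D}$ and realize $\fol{\ell}$ on a disc bundle over $\mathcal{U}_{\ell}$ with $S_{\ell}=\mathcal{U}_{\ell}\times\left\{ 0\right\} $. Since the holonomy of $\fol{\ell}$ along $S_{\ell}$ equals $\Delta_{\ell}$, a classification theorem of Mattei--Moussu type provides a biholomorphism, over a punctured neighborhood of $p_{\ell}$, between $\fol{\ell}$ and $\fol{\tx{susp}}$, sending separatrices to divisors. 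Using these biholomorphisms as transition maps yields a smooth surface $\mathcal{W}$ carrying a singular foliation $\hat{\fol{}}$ which by construction satisfies conclusions~(2), (3) and~(4), and whose only singularities on $\mathcal{D}$ are the $p_{\ell}$.

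Finally, I contract $\mathcal{D}$. By Grauert's criterion a smooth rational curve embedded in a smooth surface can be blown down to a smooth point provided its self-intersection equals $-1$. The Camacho--Sad formula applied to $\left(\hat{\fol{}},\mathcal{D}\right)$ gives
\begin{eqnarray*}
\mathcal{D}\cdot\mathcal{D} & = & \sum_{\ell=0}^{n}\csad[\hat{\fol{}}][\mathcal{D}][p_{\ell}]=\sum_{\ell=0}^{n}\csad[\fol{\ell}][S_{\ell}][p_{\ell}]=-1
\end{eqnarray*}
by~({\tt CS}) and the identification $S_{\ell}\subset\mathcal{D}$ coming from the gluing. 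The contraction produces a germ of foliation $\fol{}$ on $\left(\ww C^{2},0\right)$ whose blow-up $E$ reproduces $\hat{\fol{}}=E^{*}\fol{}$, giving~(1). The main obstacle is the local analytic gluing of the second step: one must upgrade the mere \emph{conjugacy} of holonomies into a biholomorphism of punctured disc neighborhoods conjugating $\fol{\ell}$ to $\fol{\tx{susp}}$ while respecting the separatrix. This rests on the analytic classification of reduced singularities by their holonomy (necessity being obvious, sufficiency requiring a delicate Cauchy problem in the transverse coordinate along $S_{\ell}\setminus\left\{ p_{\ell}\right\} $, where transversality to the fibration of Section~\ref{sec:Notations} makes the transport well defined). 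The Camacho--Sad relation is then precisely the numerical obstruction for the glued surface to sit inside $\left(\ww C^{2},0\right)$ after contraction; without it, one obtains a foliation on a different germ of surface.
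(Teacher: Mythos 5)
The paper offers no proof of Theorem~\ref{thm:synthese}: it is imported verbatim from \citep[p159]{Lolo} as a ready\textendash{}made version of Lins\textendash{}Neto's realization theorem, so there is no internal argument to compare yours against. Your outline does follow the standard route of the cited reference (suspension of the holonomy representation over $\mathcal{D}\backslash\left\{ p_{0},\dots,p_{n}\right\} $, gluing of the local models, Camacho\textendash{}Sad to compute the self\textendash{}intersection, contraction of the $\left(-1\right)$\textendash{}curve), and the overall architecture is sound; in particular your use of the Camacho\textendash{}Sad index theorem on the glued surface to deduce $\mathcal{D}\cdot\mathcal{D}=-1$ from $\left({\tt CS}\right)$ is legitimate. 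Two points need repair. First, the suspension of a representation into $\diff{\ww C,0}$ is not literally ``the quotient of the trivial disc bundle over the universal cover'': the $\Delta_{\ell}$ are germs, long words in the generators need not be defined on any common disc, and the naive quotient is not proper. One must instead build a germ of foliated neighborhood of $\mathcal{D}\backslash\left\{ p_{\ell}\right\} $ from a finite cover by simply connected charts glued along locally constant transition maps realizing the cocycle of the representation (the germified suspension of Mattei\textendash{}Moussu). Second, and more seriously, the lemma you invoke for the gluing \textendash{} ``the analytic classification of reduced singularities by their holonomy'' \textendash{} is both stronger than what you need and false in the generality demanded by the statement: the holonomy of the weak separatrix of a convergent saddle\textendash{}node (for the formal model it is just $h\mapsto e^{2\ii\pi\mu}h$) does not determine the analytic class, yet the theorem allows $S_{\ell}$ to be that separatrix. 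What the gluing actually requires is only the elementary fact that two germs of regular foliations along an annulus contained in a common leaf, with conjugate holonomy, are conjugate by a fibered biholomorphism: you identify $\fol{\ell}$ with $\fol{\tx{susp}}$ over a ring $\left\{ r_{1}<\left|x\right|<r_{2}\right\} $ around $p_{\ell}$, away from the singular point, by lifting paths. With that substitution your argument closes.
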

\begin{rem}
L'égalité exprimée par $\left({\tt CS}\right)$ est la condition nécessaire
donnée par la formule de Camacho\textendash{}Sad. Il est en général
assez facile de garantir que cette condition tient. Cependant \noun{Y.~Il'Yashenko~\citep{IlYaHedgehog}}
a décrit un sous\textendash{}groupe $\left\langle \Delta_{1},\Delta_{2},\Delta_{3}\right\rangle $
engendré par des germes non linéarisables tangents à une rotation
irrationnelle à petits diviseurs, tel que $\Delta_{1}\circ\Delta_{2}\circ\Delta_{3}=\id$,
mais dont la somme des indices de Camacho\textendash{}Sad de n'importe
quelle réalisation locale (comme l'holonomie d'un feuilletage $\fol{\ell}$)
est toujours inférieure à $-2$.
\end{rem}

\subsection{\label{sub:ex_dvg}Avec au moins un nœud\textendash{}col divergent}

Ces exemples ont été construits avec l'aide précieuse de \noun{D.~Mar\'{i}n}.

\subsubsection{Cas d'un seul nœud\textendash{}col}

Il existe un nœud\textendash{}col $\fol 1$ divergent, avec $k=2$
et $\mu=0$, dont l'invariant de classification orbitale de Martinet\textendash{}Ramis~\citep{MaRa-SN}
est 
\begin{eqnarray*}
\left(\varphi_{0}^{0},\varphi_{0}^{\infty},\varphi_{1}^{0},\varphi_{1}^{\infty}\right) & = & \left(\id,\id+1,\id,\id+1\right)\,.
\end{eqnarray*}
Celui\textendash{}ci est invariant par la permutation des indices
$\,_{j}^{\sharp}\leftrightarrow\,_{1-j}^{\sharp}$ pour $\sharp\in\left\{ 0,\infty\right\} $.
Il existe donc $\Delta_{2}\in Diff\left(\ww C^{2},0\right)$, périodique
d'ordre $2$, qui commute à l'holonomie forte $\Delta_{1}$ de $\fol 1$
\citep[Corollaire 2.8.4 p60]{Lolo}. On note $\Delta_{0}:=\Delta_{1}^{\circ-1}\circ\Delta_{2}^{\circ-1}$.

Puisque dans une bonne coordonnée 
\begin{eqnarray*}
\Delta_{0}\left(h\right) & = & -h+o\left(h\right)\\
\Delta_{1}\left(h\right) & = & h+h^{3}+o\left(h^{3}\right)\\
\Delta_{2}\left(h\right) & = & -h+o\left(h\right)
\end{eqnarray*}
il est possible de trouver des feuilletages locaux donnés par des
selles $\fol 2$ (linéarisable) et $\fol 0$ (résonnante) avec un
indice de Camacho\textendash{}Sad égal à $-\frac{1}{2}$, dont les
holonomies sont précisément $\Delta_{2}$ et $\Delta_{0}$ respectivement
(voir~\citep{MaRa-Res}). En appliquant le Théorème~\ref{thm:synthese}
on obtient un feuilletage $\fol{}$ dont la réduction induit un feuilletage
$\hat{\fol{}}$ défini sur un voisinage du diviseur exceptionnel $\mathcal{D}$
ayant trois singularités $p_{\ell}$, $\ell\in\left\{ 0,1,2\right\} $.
On peut de plus supposer que chaque séparatrice locale de $\hat{\fol{}}$
est soit incluse dans $\mathcal{D}$, soit un petit disque $S_{\ell}\subset\hat{\Pi}^{-1}\left(p_{\ell}\right)$
transverse à $\mathcal{D}$ pour $\ell\in\left\{ 0,2\right\} $, $\hat{\Pi}$
désignant la projection canonique sur $\mathcal{D}$. Notons que les
autres feuilles de $\hat{\fol{}}$ sont transverses aux fibres de
$\hat{\Pi}$, sauf sur le long d'une courbe analytique $T\pitchfork\mathcal{D}$
localisée près de $p_{1}$. 

\bigskip{}

Il est possible de trouver une petite transversale $\Sigma=\hat{\Pi}^{-1}\left(p_{*}\right)$,
munie de la coordonnée $h\in\ww D$, sur lequel la représentation
d'holonomie associée à $\hat{\Pi}$ et $\mathcal{D}^{*}:=\mathcal{D}\backslash\left\{ p_{0},p_{1},p_{2}\right\} $
coïncide avec $\left\langle \Delta_{0},\Delta_{1},\Delta_{2}\right\rangle $.
Le groupe fondamental $\pi_{1}\left(\mathcal{D}^{*}\right)$ est un
groupe libre de rang $2$, de générateurs $a_{1}$ et $a_{2}$ faisant
respectivement un tour autour de $p_{1}$ et $p_{2}$ mais aucun autour
de l'autre point. Clairement
\begin{eqnarray*}
\holo{a_{2}^{2}} & = & \id\\
\holo{\left[a_{1},a_{2}\right]} & = & \id
\end{eqnarray*}
et les mots $u:=a_{2}^{2}$ et $v:=\left[a_{1},a_{2}\right]$ sont
sans puissance commune. Il s'ensuit que les relevés respectifs $\tilde{u}$
et $\tilde{v}$ de $u$ et $v$ par $\hat{\Pi}$ dans une feuille
$\mathcal{L}$ (assez proche du diviseur) sont des cycles de la feuille.
En choisissant un voisinage $\Omega$ de $p_{*}$ sur lequel $\holo u$
et $\holo v$ sont définies on s'aperçoit que chaque élément de $\holo{\left\langle u,v\right\rangle }$
est holomorphe et injectif sur $\Omega$. Ainsi pour chaque feuille
$\mathcal{L}$ de $\hat{\fol{}}$ assez proche du diviseur on a 
\begin{eqnarray*}
\left\langle \tilde{u},\tilde{v}\right\rangle  & < & \pi_{1}\left(\mathcal{L}\right)\,.
\end{eqnarray*}
Ces éléments sont sans puissances communes sinon $u$ et $v$ le seraient.

Une feuille $\mathcal{L}$ étant une surface de Riemann à bord son
groupe fondamental est libre. Puisque $\left\langle u,v\right\rangle $
n'est pas monogène cela signifie qu'aucun morphisme injectif ne peut
exister de $\pi_{1}\left(\mathcal{L}\right)$ dans $\pi_{1}\left(\mathcal{V}^{*}\right)=\ww Z\oplus\ww Z$.
Pour conclure, ce phénomène se produisant aussi près du diviseur que
l'on souhaite, le feuilletage n'est incompressible dans aucun voisinage
de la singularité.

\subsubsection{Avec un nœud\textendash{}col divergent et un convergent}

On peut construire un exemple un peu plus explicite que le précédent
mais en utilisant le même argument de grandeur des groupes libres
de rang $2$. On considère une équation de Riccati, définie sur $\ww D\times\ww CP^{1}$
et plus connue sous le nom d'équation d'Euler, donnée par la $1$\textendash{}forme
\begin{eqnarray*}
\omega_{E}\left(x,y\right) & := & x^{2}\dd y-\left(y+x\right)\dd x\,.
\end{eqnarray*}
Celle\textendash{}ci admet deux singularités de type nœud\textendash{}col.
La première, en $\left(0,0\right)$, est de type divergent alors que
celle située en $\left(0,\infty\right)$ est de type convergent. Les
holonomies fortes associées sont inverses l'une de l'autre. On peut
prendre comme troisième singularité la selle linéaire
\begin{eqnarray*}
\omega_{L}\left(x,y\right) & := & x\dd y+y\dd x
\end{eqnarray*}
dont l'holonomie est l'identité. On synthétise alors un éclatement
de singularité de feuilletage ayant ces trois singularités locales,
dont les invariants de Camacho\textendash{}Sad le long du diviseur
exceptionnel sont $0$ pour les nœuds\textendash{}cols situés en $p_{0}$
et $p_{1}$ et $-1$ pour la selle située en $p_{2}$.

Cette fois les mots 
\begin{eqnarray*}
u & := & a_{2}\\
v & := & \left[a_{1},a_{2}\right]
\end{eqnarray*}
sont dans le noyau de $\holo{\bullet}$ et sans puissance commune.
Cela signifie encore qu'il existe des feuilles $\mathcal{L}$ telles
que $\pi_{1}\left(\mathcal{L}\right)$ contienne un groupe libre de
rang $2$, et comme $\pi_{1}\left(\mathcal{V}^{*}\right)=\ww Z\oplus\ww Z$
l'incompressibilité est impossible. En fait il est possible de montrer
directement l'existence d'un lacet non trivial de $\mathcal{L}$ qui
l'est dans un voisinage épointé $\mathcal{V}^{*}$ des séparatrices.
\begin{prop}
Notons $\gamma$ le relevé de $\left[a_{1},a_{2}\right]$ par $\hat{\Pi}$
dans une feuille $\mathcal{L}$ de $\hat{\fol{}}$ suffisamment proche
de $\mathcal{D}$. On écrit $\iota^{*}\,:\,\pi_{1}\left(\mathcal{L}\right)\to\pi_{1}\left(\mathcal{V}^{*}\right)$
le morphisme naturel induit par l'inclusion $\mathcal{L}\subset\mathcal{V}^{*}$.
Alors $\gamma\in\ker\iota^{*}\backslash\left\{ 1\right\} $.\end{prop}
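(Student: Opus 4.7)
La preuve se scinde naturellement en deux v�rifications ind�pendantes~: d'une part $\gamma\neq1$ dans $\pi_{1}\left(\mathcal{L}\right)$, d'autre part $\iota^{*}\gamma=1$ dans $\pi_{1}\left(\mathcal{V}^{*}\right)$.

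Pour la non\textendash{}trivialit� dans la feuille, je compte appliquer litt�ralement l'argument d�j� d�velopp� au paragraphe qui pr�c�de~: les mots $u:=a_{2}$ et $v:=\left[a_{1},a_{2}\right]$ du groupe libre $\pi_{1}\left(\mathcal{D}^{*}\right)$ appartiennent au noyau de la repr�sentation d'holonomie et sont sans puissance commune, si bien que leurs relev�s par $\hat{\Pi}$ dans une feuille $\mathcal{L}$ suffisamment proche du diviseur sont des cycles engendrant un sous\textendash{}groupe libre de rang~$2$ de $\pi_{1}\left(\mathcal{L}\right)$. On en extrait imm�diatement $\gamma=\tilde{v}\neq1$.

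Pour la trivialit� dans le voisinage �point�, la strat�gie est de calculer explicitement $\pi_{1}\left(\mathcal{V}^{*}\right)$ puis de suivre l'image de $\gamma$. Apr�s l'�clatement r�duisant $\fol{}$, l'ensemble $\hat{\mathcal{S}}$ des transform�es strictes des s�paratrices se r�duit � deux disques transverses � $\mathcal{D}$, issus respectivement de $p_{0}$ (n�ud\textendash{}col convergent admettant une s�paratrice faible hors du diviseur) et de $p_{2}$ (selle lin�aire admettant l'autre s�paratrice hors du diviseur), car l'unique s�paratrice du n�ud\textendash{}col divergent plac� en $p_{1}$ est le diviseur lui\textendash{}m�me. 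Il en r�sulte que
\begin{eqnarray*}
\pi_{1}\left(\mathcal{V}^{*}\right) & \simeq & \ww Z\oplus\ww Z\,,
\end{eqnarray*}
engendr� par les m�ridiens de ces deux disques transverses. Sous le morphisme induit par l'inclusion $\mathcal{D}^{*}\hookrightarrow\mathcal{V}^{*}$ le lacet $a_{1}$ devient trivial (il se contracte sur un petit disque de $\mathcal{D}$ centr� en $p_{1}$, disjoint de $\hat{\mathcal{S}}$), et le groupe d'arriv�e est de surcro�t commutatif, de sorte que le commutateur $\left[a_{1},a_{2}\right]$ y est nul.

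Il reste � conclure via une homotopie fibr�e reliant $\gamma$ � son projet� $\hat{\Pi}\circ\gamma=\left[a_{1},a_{2}\right]$ dans $\mathcal{V}^{*}$, ce qui constitue l'obstacle technique principal~: il faut exhiber un voisinage tubulaire de $\mathcal{D}^{*}$ dans $\mathcal{V}^{*}$ sur lequel $\hat{\Pi}$ soit une submersion � fibres contractiles �vitant $\hat{\mathcal{S}}$. Ce voisinage s'obtient ais�ment loin des singularit�s, et pr�s de $p_{0}$ et $p_{2}$ on utilisera la forme locale du feuilletage pour d�former radialement $\gamma$ vers le diviseur en restant du bon c�t� de la s�paratrice transverse.
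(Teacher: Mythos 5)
Your decomposition into ``$\gamma\neq1$ dans $\pi_{1}\left(\mathcal{L}\right)$'' and ``$\iota^{*}\gamma=1$'' matches the paper's, and your treatment of the second half is essentially the paper's own argument: the only separatrix through $p_{1}$ is the divisor, so the $a_{1}$\textendash{}portions of the lift can be cancelled across the fibre over $p_{1}$, after which $\gamma$ is homotopic in $\mathcal{V}^{*}$ to $\tilde{a}_{2}\tilde{a}_{2}^{-1}=1$; your detour through the explicit computation $\pi_{1}\left(\mathcal{V}^{*}\right)\simeq\ww Z\oplus\ww Z$ and the vanishing of the commutator is a slightly longer route to the same place. (Beware of one imprecision: $\mathcal{D}^{*}$ is \emph{not} contained in $\mathcal{V}^{*}$, the divisor being removed, so ``le morphisme induit par l'inclusion $\mathcal{D}^{*}\hookrightarrow\mathcal{V}^{*}$'' only makes sense after a push\textendash{}off into a nearby fibre \textendash{} which is exactly the fibred homotopy you defer, and whose class may a priori differ by meridians of $\mathcal{D}$.)

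The first half, however, has a genuine gap: it is circular. To extract $\gamma=\tilde{v}\neq1$ from ``$\left\langle \tilde{u},\tilde{v}\right\rangle $ est libre de rang $2$'' you must first know that this subgroup of the free group $\pi_{1}\left(\mathcal{L}\right)$ is not cyclic; but if $\tilde{v}$ were trivial then $\left\langle \tilde{u},\tilde{v}\right\rangle =\left\langle \tilde{u}\right\rangle $ would be cyclic. The rank\textendash{}$2$ claim therefore already presupposes precisely the non\textendash{}triviality you want to deduce from it. Moreover the hypothesis ``$u$ et $v$ sans puissance commune dans $\pi_{1}\left(\mathcal{D}^{*}\right)$'' does not transfer to the lifts: the lifting map $\ker\holo{\bullet}\to\pi_{1}\left(\mathcal{L}\right)$ is a homomorphism, but nothing rules out a priori that it kills $v=\left[a_{1},a_{2}\right]$ \textendash{} establishing that it does not is the entire content of the Proposition, and is what makes it a sharper statement than the free\textendash{}group discussion preceding it. The paper closes this by a direct geometric argument that your sketch is missing: lifting $a_{1}a_{2}a_{1}^{-1}a_{2}^{-1}$ from $A\in\Sigma\cap\mathcal{L}$, the path visits $\Delta_{1}^{\circ-1}\left(A\right)\neq A$ because the strong holonomy $\Delta_{1}$ of the divergent saddle\textendash{}node has no fixed point on $\Sigma\setminus\left\{ 0\right\} $; a tangential contraction of $\gamma$ would then force $A$ and $\Delta_{1}^{\circ-1}\left(A\right)$ to be joined inside $\Sigma\cap\mathcal{L}$, which is discrete. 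Some argument of this kind, exploiting the fixed\textendash{}point\textendash{}free holonomy at $p_{1}$, is needed to make your first step non\textendash{}circular.
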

\begin{proof}
Soit $\Sigma$ une transversale sur lequel est réalisée l'holonomie
$\holo{\left[a_{1},a_{2}\right]}$ et prenons une feuille $\mathcal{L}$
coupant $\Sigma$ ; notons également $\tilde{a}_{2}$ le relevé de
$a_{2}$ dans $\mathcal{L}$. Puisque la seule séparatrice de $\hat{\fol{}}$
passant par $p_{1}$ est le diviseur, $\gamma$ est homotope dans
$\mathcal{V}^{*}$ à $\tilde{a}_{2}\tilde{a}_{2}^{-1}=1$ (autrement
dit $\gamma\in\ker\iota^{*}$). Pour autant $\gamma\neq1$ dans $\pi_{1}\left(\mathcal{L}\right)$,
comme nous allons le montrer. Partant d'un point $A\in\Sigma\cap\mathcal{L}$
le relevé de $\left[a_{1},a_{2}\right]=a_{1}a_{2}a_{1}^{-1}a_{2}^{-1}$
on obtient la configuration décrite en Figure~\ref{fig:contrex}.
Comme l'holonomie forte $\Delta_{1}$ du nœud\textendash{}col n'a
pas de point fixe on a toujours $A\neq\Delta_{1}^{\circ-1}\left(A\right)$.
Mais si $\gamma$ était trivial dans $\pi_{1}\left(\mathcal{L}\right)$
alors ces points devraient être homotopes dans $\mathcal{L}\cap\Sigma$
(voir la Figure~\ref{fig:contrex}), ce qui n'est pas possible puisque
$\Sigma\cap\mathcal{L}$ est discret.

\begin{figure}[H]
\hfill{}\includegraphics[width=10cm]{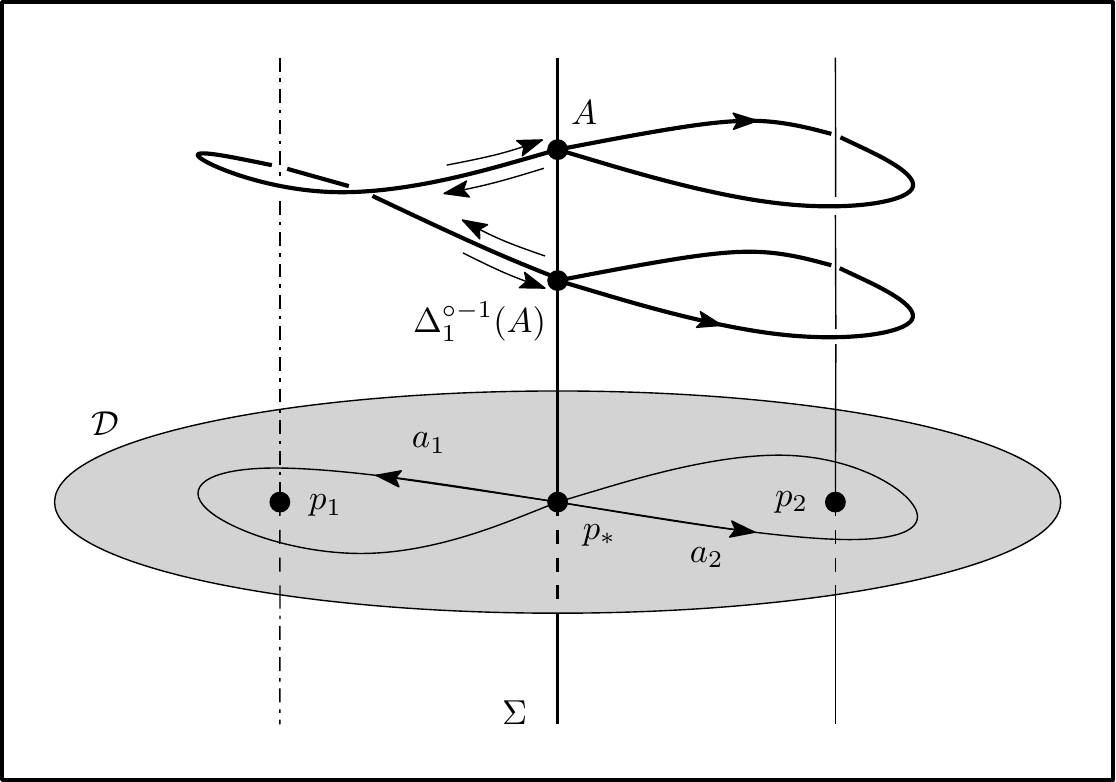}\hfill{}

\caption{\label{fig:contrex}Relevé dans le feuilletage du commutateur $\left[a_{1},a_{2}\right]$
à partir d'un point $A$ d'une feuille $\mathcal{L}$. }
\end{figure}

\end{proof}

\subsubsection{Encore plus de contre\textendash{}exemples ?}

Plus généralement, dès qu'il existe un nœud\textendash{}col divergent
dans la réduction de $\fol{}$ alors peuvent apparaître des cycles
forçant la compressibilité du feuilletage, lorsque existent des relations
entre les holonomies provenant d'autres parties du feuilletage. D'une
manière plus précise, si on place le nœud\textendash{}col en un point
$p$ d'un diviseur $\mathcal{D}$ et si $a$ un générateur du groupe
fondamental de $\mathcal{D}^{*}$ tournant autour de $p$, alors le
noyau de la flèche naturelle 
\begin{eqnarray*}
\iota^{*}\,:\,\pi_{1}\left(\mathcal{D}^{*}\right) & \to & \pi_{1}\left(\mathcal{D}^{*}\cup\left\{ p\right\} \right)
\end{eqnarray*}
doit contenir $a$.

\subsection{\label{sub:ex_cvg}Sans nœud\textendash{}col divergent}

Ces exemples s'appuient le modèle $\left(1,0\right)$
\begin{eqnarray*}
\omega_{0}\left(x,y\right) & := & x^{2}\dd y-y\dd x\,,
\end{eqnarray*}
mais la même construction se généralise quasiment à l'identique pour
chaque modèle $\left(k,0\right)$ avec $k\geq1$. Le feuilletage convergent
induit possède la famille $\left(\Gamma_{c}\right)_{c\in]0,1[}$ de
chemins inamovibles
\begin{eqnarray*}
\Gamma_{c}\,:\,\left[-\pi,\pi\right] & \longrightarrow & \adh{\ww D\times\ww D}\\
t & \longmapsto & \left(c\exp\left(\ii t\right)\,,\,\exp\left(-\frac{1}{c}\left(1+\exp\left(-\ii t\right)\right)\right)\right)
\end{eqnarray*}
(voir la Section~\ref{sub:chemins_inamov} et la Définition~\ref{def_inamov}).
Leur extrémité $\left(-c\,,\,1\right)$ appartient à la même transversale
$\left\{ y=1\right\} $. L'explication géométrique de l'existence
de ces chemins inamovibles est la présence dans l'équation
\begin{eqnarray*}
y\frac{\dd x}{\dd y} & = & x^{2}
\end{eqnarray*}
d'un pôle mobile de la solution
\begin{eqnarray*}
x\left(y\right) & = & \frac{-c}{c\log y+1}\,.
\end{eqnarray*}
 Leur projection sur $\left\{ x=0\right\} $ a la forme d'un haricot
(disons si $c>\frac{1}{\pi}$) d'indice nul autour de $\left\{ y=0\right\} $
mais entourant la position du pôle mobile $y_{c}:=\exp\nf{-1}c$ (voir
la Figure~\ref{fig:haricot_1}). Pour cette raison ces cycles ont
été suggérés par \noun{E.~Paul} comme candidats pour trouver un nœud\textendash{}col
compressible. Ils ne fournissent cependant pas de contre\textendash{}exemple
car chaque $\Gamma_{c}$ est non trivial\emph{ }dans $\adh{\ww D\times\ww D}\backslash\left\{ x=0\right\} $.
Par contre en adjoignant une autre singularité on peut former des
cycles produisant les exemples attendus.

\begin{figure}[H]
\hfill{}\includegraphics[width=10cm]{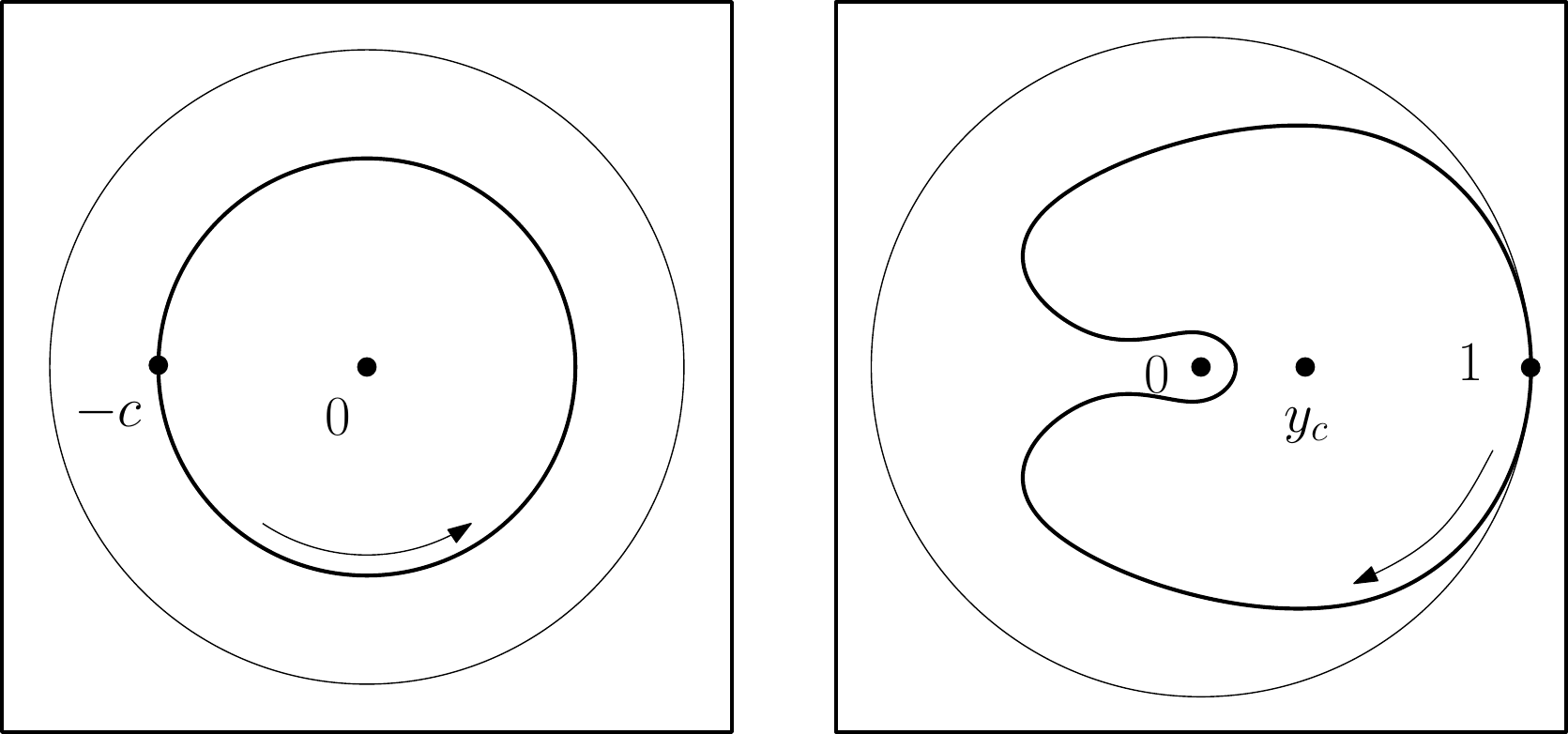}\hfill{}

\caption{\label{fig:haricot_1}Projeté de $\Gamma_{c}$ sur $\left\{ y=0\right\} $
(à gauche) et sur $\left\{ x=0\right\} $ (à droite). }
\end{figure}

\bigskip{}
Effectuons le tiré\textendash{}en\textendash{}arrière de $\omega_{0}$
par une application $\psi$ de degré $2$ afin de symétriser la situation
en plaçant deux tels feuilletages en vis\textendash{}à\textendash{}vis.
Explicitement,
\begin{eqnarray*}
\psi\,:\,\left(x,y\right) & \longmapsto & \left(x,1-y^{2}\right)
\end{eqnarray*}
transforme $\omega_{0}$ en 
\begin{eqnarray*}
\psi^{*}\omega_{0} & = & \left(y^{2}-1\right)\dd x-2yx^{2}\dd y\,.
\end{eqnarray*}
Ce feuilletage possède les trois séparatrices $\left\{ x=0\right\} $
et $\left\{ y=\pm1\right\} $. Le chemin $\Gamma_{c}$ admet deux
pré\textendash{}images par $\psi$ 
\begin{eqnarray*}
\Gamma_{c}^{\pm}\,:\,\left[-\pi,\pi\right] & \longrightarrow & \ww D\times\ww C\\
t & \longmapsto & \left(c\exp\left(\ii t\right)\,,\,\pm\sqrt{1-\exp\left(-\frac{1}{c}\left(1+\exp\left(-\ii t\right)\right)\right)}\right)\,.
\end{eqnarray*}
Comme l'image de $\Gamma_{c}$ est contenue dans $\adh{\ww D\times\ww D}$
on peut considérer une détermination fixée de la racine carrée sur
la coupure $\ww C\backslash\ww R_{<0}$. Maintenant on fabrique le
chemin
\begin{eqnarray*}
\gamma_{c}\,:\,\left[-2\pi,2\pi\right] & \longrightarrow & \ww D\times\ww C\\
t\leq0 & \longmapsto & \Gamma_{c}^{-}\left(\pi+t\right)\\
t\geq0 & \longmapsto & \Gamma_{c}^{+}\left(\pi-t\right)\,.
\end{eqnarray*}
 Par construction ce cycle est d'indice nul autour de chaque branche
de $\left\{ x\left(y^{2}-1\right)=0\right\} $, mais étant la concaténation
de deux chemins inamovibles «distants» il n'est pas tangentiellement
trivial. Puisque $c$ peut\textendash{}être pris arbitrairement proche
de $0$, le feuilletage $\tilde{\fol{}}$ défini par $\psi^{*}\omega_{0}$
n'est incompressible dans aucun voisinage de $S:=\left\{ x=0\right\} $.

\begin{figure}[H]
\hfill{}\includegraphics[width=10cm]{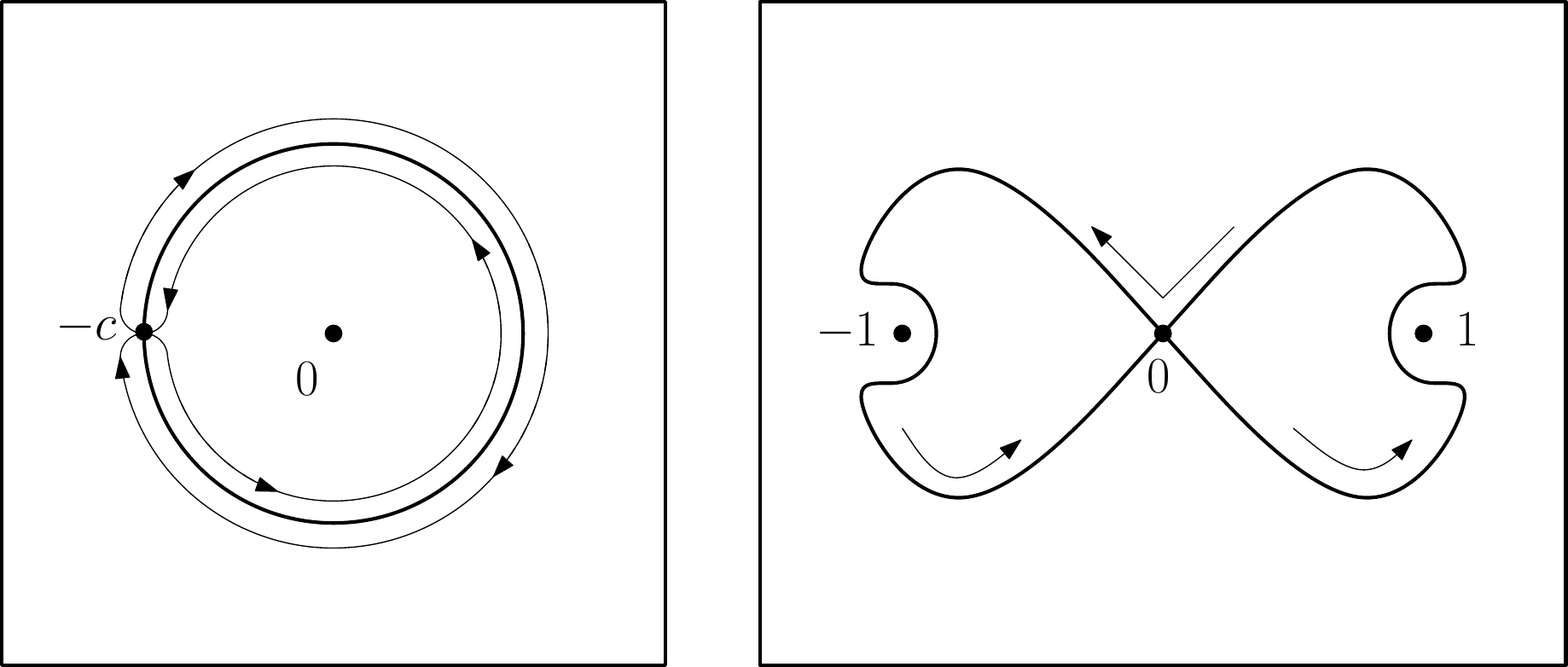}\hfill{}

\caption{\label{fig:haricot_2}Projeté de $\gamma_{c}$ sur $\left\{ y=0\right\} $
(à gauche) et sur $\left\{ x=0\right\} $ (à droite). }
\end{figure}

Cette association de deux singularités $p_{0}=\left(0,-1\right)$
et $p_{1}=\left(0,1\right)$ forme un bloc, muni du feuilletage $\tilde{\fol{}}$,
que nous allons plonger dans la réduction d'un germe de feuilletage
singulier en lui adjoignant une troisième singularité $p_{2}$, le
diviseur exceptionnel\emph{ }coïncidant avec $S$. Puisque 
\begin{eqnarray*}
\csad[\tilde{\fol{}}][S][p_{0}] & = & \csad[\tilde{\fol{}}][S][p_{1}]\\
 & = & 0\,,
\end{eqnarray*}
et comme l'holonomie de $\tilde{\fol{}}$ le long du chemin $t\in\left[0,2\pi\right]\mapsto\left(0,2\exp\left(\ii t\right)\right)$
est conjuguée à $\Delta_{0}^{\circ2}$, en notant $\Delta_{0}$ l'holonomie
forte de $\omega_{0}$, on choisit pour $p_{2}$ une selle résonnante
tangente à $x\dd y+y\dd x$ et d'holonomie $\Delta_{0}^{\circ-2}$
tangente à l'identité (voir~\citep{MaRa-Res}). L'invocation du Théorème~\ref{thm:synthese}
achève alors la construction.

\subsection{\label{sub:branches_mortes}Un feuilletage possédant beaucoup de
composantes initiales}

\subsubsection{Une branche morte \emph{ex nihilo}}

Ici nous construisons une famille d'exemples correspondant à la situation
décrite dans le Lemme~\ref{lem:sing_pas_morte} et correspondant
à la figure ci\textendash{}dessous.

\begin{figure}[H]
\includegraphics[height=5cm]{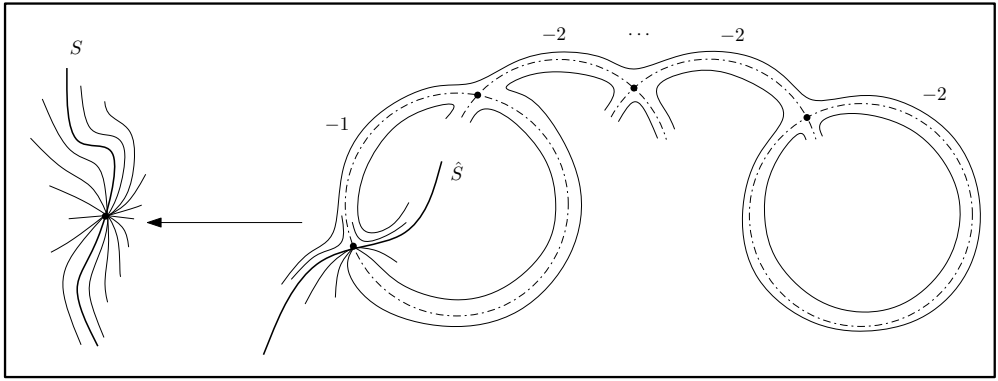}
\caption{Une branche morte obtenue en réduisant une seule séparatrice (en gras).
Les nombres indiquent la classe de Chern des composantes.}
\end{figure}

Considérons pour commencer le feuilletage donné par la forme différentielle,
correspondant à un nœud résonant linéaire,
\begin{eqnarray*}
\omega_{1}\left(x,y\right) & := & \left(x-y\right)\dd x+x\dd y\,.
\end{eqnarray*}
Le feuilletage sous\textendash{}jacent est réduit en un éclatement,
et sa réduction contient une seule singularité (un nœud\textendash{}col
modèle avec $\left(k,\mu\right)=\left(1,-1\right)$). 

Plus généralement on vérifie sans peine que, pour chaque $n\in\ww N_{>0}$,
la $1$\textendash{}forme différentielle
\begin{eqnarray*}
\omega_{n}\left(x,y\right) & := & \left(\frac{x}{n}-y^{n}\right)\dd x+nxy^{n-1}\dd y
\end{eqnarray*}
 est réduite en $n$ éclatement et sa réduction est une branche morte.
Le dernier éclatement crée un diviseur ayant d'une part un coin, d'autre
part un nœud\textendash{}col modèle 
\begin{eqnarray*}
\hat{\omega}_{n}\left(t,y\right) & := & t^{2}\dd y+y\left(1-\frac{1}{n}t\right)\dd t
\end{eqnarray*}
 par lequel passe le transformé strict de l'unique séparatrice $\left\{ x=0\right\} $
du feuilletage de départ.

\subsubsection{Une composante initiale avec beaucoup de branches mortes}

De tels feuilletages $\fol{}$ se construisent grâce à la version
plus générale du Théorème~\ref{thm:synthese} donnée dans~\citep{Lolo}.
Il suffit ici de considérer un voisinage d'un diviseur $D$ de classe
de Chern $-1$, contenant $m\in\ww N_{>0}$ singularités $\left(p_{j}\right)_{1\leq j\leq m}$,
chacune localement conjuguée à celle donnée par $\omega_{n_{j}}$
(définie dans le paragraphe précédent), et une singularité selle résonante
$p_{0}$ que nous caractérisons ci\textendash{}après. Le diviseur
$D$ sera le premier diviseur créé lors de la réduction de $\fol{}$.

Après réduction de chaque $p_{j}$, le diviseur $D$ est attaché à
$m$ branches mortes et sa classe de Chern devient
\begin{eqnarray*}
\frak{c} & := & -1-\sum_{j=1}^{m}n_{j}\in\ww Z_{<-1}\,.
\end{eqnarray*}
La singularité d'attache de la branche morte en $p_{j}$ est un nœud\textendash{}col
dont la séparatrice forte coïncide avec $D$~: son indice de Camacho\textemdash{}Sad
par rapport à $D$ est nul~; on note $\Delta_{j}$ son holonomie
forte (tangente à l'identité). On place en $p_{0}$ une selle résonante
de partie linéaire $\frak{c}x\dd y-y\dd x$ (en choisissant des coordonnées
locales dans lesquelles $D=\left\{ x=0\right\} $) dont l'holonomie
le long de $D$ 
\begin{eqnarray*}
\Delta_{0}\left(h\right) & = & \exp\left(2\ii\pi\frak{c}\right)h+\cdots=h+\cdots
\end{eqnarray*}
est l'inverse de $\bigcirc_{j=1}^{m}\Delta_{j}$. Le théorème de réalisation
de Martinet\textemdash{}Ramis~\citep{MaRa-Res} prouve qu'une telle
singularité de feuilletage existe.

\bibliographystyle{preprints}
\bibliography{biblio}

\vfill{}

\lyxaddress{Laboratoire I.R.M.A., 7 rue \noun{R.~Descartes}, Université de Strasbourg,
67084 Strasbourg cedex, France\\
\emph{Email:} \texttt{teyssier@math.unistra.fr}}

\vfill{}

\end{document}